\newtheorem{theorem}{Theorem}[section]
\newtheorem{definition}[theorem]{Definition}
\newtheorem{proposition}[theorem]{Proposition}
\newtheorem{lemma}[theorem]{Lemma}
\newtheorem{corollary}[theorem]{Corollary}
\newtheorem{remark}[theorem]{Remark}
\theoremstyle{definition} 
\newtheorem{example}[theorem]{Example}
\newcommand*{\circled}[1]{\lower.7ex\hbox{\tikz\draw (0pt, 0pt)%
		circle (.5em) node {\makebox[1em][c]{\small #1}};}}
\newcommand{\ho}{\mathrm{Hom}}
\newcommand{\m}{\mathbbm{m}}
\newcommand{\irr}{\mathrm{Irr}}
\newcommand{\kk}{\mathbbm{k}}
\newcommand{\tphi}{\tilde{\varphi}}
\newcommand{\hh}{\mathrm{HH}}
\begin{document}
	
	\title{\bf\Large The second Hochschild cohomology and deformations of Brauer graph algebras}
	\author{Yuming Liu$^a$, Zhengfang Wang$^b$, and Bohan Xing$^{a,*}$}
	\maketitle
	
	\renewcommand{\thefootnote}{\alph{footnote}}
	\setcounter{footnote}{-1} \footnote{\it{Mathematics Subject
			Classification(2020)}: 16E40, 16S80, 16G20.}
	\renewcommand{\thefootnote}{\alph{footnote}}
	\setcounter{footnote}{-1} \footnote{\it{Keywords}:  Second Hochschild cohomology; Formal deformation; (Bipartite) Brauer graph algebra; Brauer graph $A_\infty$-category. }
	\setcounter{footnote}{-1} \footnote{$^a$Yuming Liu and Bohan Xing, School of Mathematical Sciences, Laboratory of Mathematics and Complex Systems, Beijing Normal University,
		Beijing 100875,  P. R. China.}
	\setcounter{footnote}{-1} \footnote{$^b$Zhengfang Wang, School of Mathematics, Nanjing University,
		Nanjing 210093,  P. R. China.}
	\setcounter{footnote}{-1} \footnote{E-mail addresses: ymliu@bnu.edu.cn (Y. Liu); zhengfangw@nju.edu.cn (Z. Wang); bhxing@mail.bnu.edu.cn (B. Xing).}
	\setcounter{footnote}{-1} \footnote{$^*$Corresponding author.}
	
	{\noindent\small{\bf Abstract:} In this paper, we give an explicit description on the second Hochschild cohomology groups of bipartite Brauer graph algebras with trivial grading. Based on this, we provide geometric interpretations of deformations associated to some standard cocycles in terms of the surface models of Brauer graph algebras.}
	
	\section{Introduction}
	
	Brauer graph algebras are a class of finite dimensional algebras which can be defined by quivers with relations. The quiver of a Brauer graph algebra can be constructed from a Brauer graph which is by definition a ribbon graph together with a multiplicity function assigning a positive integer to each vertex. One can embed a ribbon graph into an oriented surface in a minimal way so that there is a natural geometric object associated to a Brauer graph algebra. In this way, Opper and Zvonareva \cite{OZ} have constructed the surface models of Brauer graph algebras and given the derived equivalence classification of Brauer graph algebras.

	The goal of this paper is to give an explicit description of the second Hochschild cohomology of bipartite Brauer graph algebras and to provide geometric interpretations of the associated deformations in terms of the surface models. By {\it bipartite} we mean that the set of vertices in the ribbon graph can be divided into two disjoint subsets such that no edges exist between vertices within the same subset. For some technical reasons, throughout this paper we will work over an algebraically closed field $\Bbbk$ of characteristic zero.

	For a $\Bbbk$-algebra given by a quiver with relations, Chouhy and Solotar \cite{CS} constructed a projective resolution depending on a choice of a reduction system. Using this projective bimodule resolution, a general combinatorial method to compute the Hochschild cohomology and to study the deformation theory of  quiver algebras was developed by Barmeier and Wang in \cite{BW}. We mention that  there seems no systematic way to construct a reduction system for a general Brauer graph algebra. However, for any bipartite Brauer graph algebra, we may construct a natural reduction system. Using this, we can give an explicit description of the second Hochschild cohomology groups of a bipartite Brauer graph algebra. That is, we show that the second Hochschild cohomology group admits a basis consisting of four types of {\it standard cocycles} (denoted by type (A), (B), (C) and (D) respectively), see the explicit forms in Proposition \ref{prop:cocycle}. Furthermore, in Theorem \ref{thm:infinitesimaldeformation}, we demonstrate that each standard cocycle can be realized as a formal deformation of the given bipartite Brauer graph algebra.
	
	In Section \ref{sec:4}, we show that the deformations corresponding to the standard cocycles can be interpreted in terms of the surface models defined in \cite{OZ}. Precisely, for a non-local Brauer graph algebra $B_\Gamma$ associated with the bipartite Brauer graph $(\Gamma,\m)$, we have the following conclusions on the second Hochschild cohomology group $\hh^2(B_\Gamma)$ of $B_\Gamma$.
	\begin{itemize}
		\item There is exactly one standard cocycle of type (A). This is related to the fact that the line field in the surface arises from a vector field for a bipartite Brauer graph algebra (concentrated in degree $0$). We show that the deformation of type (A) is isomorphic to a direct product of matrix algebras (see Proposition \ref{semi A}). In particular, the deformed algebra is semisimple. We also provide examples of (ungraded) non-bipartite Brauer graph algebras (Examples \ref{example:annulus}--\ref{example:annuli-with-2-punctures}) which do not admit a semisimple deformation, where the line field does not arise from a vector field.

		\item The standard cocycles of type (B) depend on the multiplicities of the vertices. Explicitly, for each vertex $v$ with the multiplicity $\m(v)$, there are $\m(v)-1$ standard cocycles of type (B). 
		
		\item The standard cocycles of type (C) correspond to the generators of $\mathrm{H}^1(\Gamma)$, the first cohomology group of the ribbon graph $\Gamma$. Since the ribbon graph $\Gamma$ is a deformation retract of its associated ribbon surface $\Sigma$, these generators coincide with the generators of $\mathrm{H}^1(\Sigma)$, the first cohomology group of $\Sigma$, which can act on the homotopy class of the line fields on $\Sigma$ (see \cite{LP}). Therefore, the deformations associated to this type correspond to the deformations of the line field.

		\item Each bigon in the surface model of the Brauer graph (see for example\ in Figure \ref{fig:(D)-in-geomrtry}) induces two distinct standard cocycles of type (D). Note that the bigons are  in one-to-one correspondence to the boundary components with winding number $-2$. The associated deformations of the two cocycles have the following geometric interpretation respectively 
		\begin{itemize}
			\item Compactifying the boundary component into a smooth point (that is, filling the boundary component by a smooth disk), so that the deformed algebra is still a Brauer graph algebra (up to Morita equivalence) (which is non-basic as the corresponding two simple modules become isomorphic after the deformation) with the compactified surface as its surface model. 
			\item Compactifying the boundary component into a `singular' point. This in some sense resembles an orbifold point discussed in \cite{BSW}, precisely, we expect that for any admissible arc system on this surface, the sequence of morphisms around this boundary induces a deformed higher multiplication on the underlying Brauer graph algebra. %This construction would yield a new class of derived equivalent algebras. 
			In some special case, such a deformation induces a transition from a Brauer graph algebra to a gentle algebra, which can be viewed as a way of removing the trivial extension (Example \ref{exa:BGA-deform-gentle}).
		\end{itemize}
	\end{itemize}
	
The above description also allows us to obtain the dimension of the second Hochschild cohomology group of a bipartite Brauer graph algebra.
	
	\begin{theorem}$(\text{\rm see Corollary \ref{dimhh2} and Example \ref{exam:localbrauer}})$
		Let $B_\Gamma=\kk Q_\Gamma/I_\Gamma$ be a bipartite Brauer graph algebra corresponding to the bipartite Brauer graph $(\Gamma,\m)$.
        \begin{enumerate}
            \item 
       If $B_\Gamma$ is not local, then $$\dim_\kk\hh^2(B_\Gamma)=2+\sum_{v\in V}(\m(v)-1)+|E|-|V|+|\mathrm{S_{2\text{-}cyc}}|,$$
		where $V$ (resp.\ $E$) is the vertex (resp.\ edge) set of $\Gamma$ and 
		$$\mathrm{S_{2\text{-}cyc}}=\{\alpha\beta\;|\;\text{$\alpha\beta$ is a $2$-cycle in $B_\Gamma$ with $\alpha\neq\beta$, $\alpha\beta,\beta\alpha\in I_\Gamma$}\}.$$
        \item If $B_\Gamma$ is local, then $\Gamma$ is a ribbon graph with only one edge and two vertices. Suppose the multiplicities of these two vertices are $m_1$ and $m_2$ respectively.
        \begin{itemize}
            \item If both $m_1\neq 1$ and $m_2\neq 1$, then 
            $$\dim_\kk\hh^2(B_\Gamma)=m_1+m_2+1.$$
            \item Otherwise, $B_\Gamma$ is a local Brauer tree algebra with one exceptional vertex of multiplicity $m$ (that is, the case where $m_1=m$ and $m_2=1$). In this case,
            $$\dim_\kk\hh^2(B_\Gamma)=m.$$
        \end{itemize}
        \end{enumerate}
	\end{theorem}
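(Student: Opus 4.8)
The plan is to obtain the non-local case directly from Proposition \ref{prop:cocycle} by counting basis elements, and to treat the local case by an explicit computation, since the bipartite reduction system underlying Proposition \ref{prop:cocycle} degenerates precisely when $B_\Gamma$ is local.

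For the non-local case, Proposition \ref{prop:cocycle} tells us that the standard cocycles of types (A), (B), (C), (D) together form a $\kk$-basis of $\hh^2(B_\Gamma)$, so it suffices to count the cocycles of each type and sum. There is exactly one cocycle of type (A), contributing $1$. Each vertex $v\in V$ contributes $\m(v)-1$ cocycles of type (B), giving $\sum_{v\in V}(\m(v)-1)$. The cocycles of type (C) are indexed by a basis of $\mathrm{H}^1(\Gamma)$; since $\Gamma$ is a connected ribbon graph one has $\chi(\Gamma)=|V|-|E|$ and $\mathrm{H}^0(\Gamma)=\kk$, whence $\dim_\kk\mathrm{H}^1(\Gamma)=|E|-|V|+1$. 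Finally, each bigon produces two cocycles of type (D), and the bigons are in bijection with the $2$-cycles recorded by $\mathrm{S_{2\text{-}cyc}}$ (a bigon on $\{\alpha,\beta\}$ contributing the two ordered products $\alpha\beta,\beta\alpha$), so the count is $|\mathrm{S_{2\text{-}cyc}}|$. Adding the four contributions gives $1+\sum_{v\in V}(\m(v)-1)+(|E|-|V|+1)+|\mathrm{S_{2\text{-}cyc}}|=2+\sum_{v\in V}(\m(v)-1)+|E|-|V|+|\mathrm{S_{2\text{-}cyc}}|$, which is the asserted formula.

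For the local case I would first record the structural fact that $B_\Gamma$ is local exactly when $Q_\Gamma$ has a single vertex, i.e.\ when $\Gamma$ has a single edge; as $\Gamma$ is bipartite this edge joins two distinct vertices of multiplicities $m_1$ and $m_2$. Here $Q_\Gamma$ carries loops rather than arrows between distinct vertices, so the bipartite reduction system is unavailable and Proposition \ref{prop:cocycle} does not apply; instead I would compute $\hh^2$ directly. When $m_1=m$ and $m_2=1$ the algebra is $B_\Gamma\cong\kk[x]/(x^{m+1})$, and using the classical $2$-periodic bimodule resolution together with $\mathrm{char}\,\kk=0$, the induced cochain differentials are $0$ and (up to a nonzero scalar) multiplication by $x^{m}$, yielding $\dim_\kk\hh^2(B_\Gamma)=m$. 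When $m_1,m_2\neq 1$ the algebra is $B_\Gamma\cong\kk\langle\alpha,\beta\rangle/(\alpha\beta,\beta\alpha,\alpha^{m_1}-\beta^{m_2})$, and I would set up a reduction system adapted to the two loops, apply the Chouhy–Solotar resolution of \cite{CS} and the combinatorial machinery of \cite{BW}, and read off $\dim_\kk\hh^2(B_\Gamma)=m_1+m_2+1$; this is the content of Example \ref{exam:localbrauer}.

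The hard part will be the local case with $m_1,m_2\neq 1$: because the bipartite reduction system degenerates, one cannot invoke Proposition \ref{prop:cocycle} and must analyze the full cochain complex arising from the Chouhy–Solotar resolution by hand, controlling how the two loops and the commutation relations $\alpha\beta=\beta\alpha=0$ interact with the socle relation $\alpha^{m_1}=\beta^{m_2}$. A secondary point that deserves care in the non-local case is verifying the bijection between bigons and the elements of $\mathrm{S_{2\text{-}cyc}}$, so that the type (D) cocycles are counted without repetition, and confirming that the connectedness of $\Gamma$ is genuinely used in the identification $\dim_\kk\mathrm{H}^1(\Gamma)=|E|-|V|+1$.
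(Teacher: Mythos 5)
Your treatment of the non-local case is essentially the paper's own argument: Corollary \ref{dimhh2} is proved exactly by counting the standard cocycles of Proposition \ref{prop:cocycle}, with $1$, $\sum_{v\in V}(\m(v)-1)$, $|E|-|V|+1$ and $|\mathrm{S_{2\text{-}cyc}}|$ cocycles of types (A)--(D) respectively, and your computation of the Brauer tree case $\kk[x]/\langle x^{m+1}\rangle$ via the classical $2$-periodic bimodule resolution (using $\mathrm{char}\,\kk=0$ so that multiplication by $(m+1)x^{m}$ is nonzero) is a correct, slightly different route to the same answer $m$ that the paper obtains in Example \ref{exam:localbrauer} by listing $2$-cochains and coboundaries for the reduction system $\{(x^{m+1},0)\}$.

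The genuine gap is the local case with $m_1\neq 1$ and $m_2\neq 1$. You assert that, because the quiver consists of loops, ``the bipartite reduction system is unavailable and Proposition \ref{prop:cocycle} does not apply,'' and you then only sketch a plan (``set up a reduction system adapted to the two loops \ldots\ and read off $m_1+m_2+1$''), attributing its content to Example \ref{exam:localbrauer}. Both points are off: Example \ref{exam:localbrauer} treats only the Brauer tree case, so it cannot be cited for this formula, and the hypothesis of Proposition \ref{prop:cocycle} excludes only local Brauer \emph{tree} algebras, not all local ones. For the single edge joining $v_1\in V_1$, $v_2\in V_2$ with $m_1,m_2\neq 1$ neither vertex is truncated, so $Q'_\Gamma=Q_\Gamma$ has two loops $\alpha,\beta$ and the construction of Subsection \ref{subsection:reductionsystem} produces the perfectly good reduction system $\{(\alpha^{m_1},\beta^{m_2}),(\beta^{m_2+1},0),(\alpha\beta,0),(\beta\alpha,0)\}$; the construction is phrased in terms of half-edges and does not degenerate just because the arrows are loops. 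Consequently the paper needs no separate hand computation here: Corollary \ref{dimhh2} applies, $|S_{2\text{-}cyc}|=2$ (both $\alpha\beta$ and $\beta\alpha$), and the general formula gives $2+(m_1-1)+(m_2-1)+1-2+2=m_1+m_2+1$. As written, your proposal leaves exactly this subcase unproved (you yourself flag it as ``the hard part''), so either carry out the promised direct computation with the two-loop reduction system, or, more efficiently, observe that Proposition \ref{prop:cocycle} and Corollary \ref{dimhh2} already cover it and specialize the formula as above.
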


	For a ribbon graph $\Gamma$, let $\Sigma$ be its associated ribbon surface. The first Betti number of $\Sigma$ is given by $\mathrm{rank}(\mathrm{H}^1(\Sigma)) = |E| - |V| + 1$.
If $B_\Gamma$ is not local then $|\mathrm S_{2\text{-cyc}}| = 2 |\partial\Sigma_{-2}|$ where  $\partial\Sigma_{-2}$ is the set of boundary components in $\Sigma$ with winding number $-2$. (For a local Brauer graph algebra we may have that  $|\mathrm S_{2\text{-cyc}}| > 2 |\partial\Sigma_{-2}|$.)   Therefore, in this case the dimension of $\hh^2(B_\Gamma)$ may be expressed in terms of the surface model:

%For each ribbon graph $\Gamma$, consider its corresponding ribbon surface $\Sigma$. Note that the rank of $\mathrm{H}^1(\Sigma)$ equals $1 + |E|-|V|$. 

\begin{align}\label{align:formalhh}
\dim_\kk\hh^2(B_\Gamma)=1+\sum_{v\in V}(\m(v)-1)+ \mathrm{rank}(\mathrm H^1(\Sigma))+ 2|\partial\Sigma_{-2}|.
\end{align}

By Keller \cite{Kel}, deformations of an algebra can be viewed as deformations of its derived category. It is natural to ask whether, for two derived equivalent algebras, deformations induced by the same Hochschild cohomology remain derived equivalent. This question has a positive answer for gentle algebras (see \cite{BSW1}), using geometric models. We expect a similar phenomenon for Brauer graph algebras. However, as their geometric model is less well developed, a complete picture is still out of reach. For now, we obtain analogous results for type (A) deformations, supporting this expectation.
	
	\begin{proposition}$(\text{\rm see Proposition \ref{semi A} and Corollary \ref{pre der}})$
		Let $\Gamma$ and $\Gamma'$ be bipartite Brauer graphs and $B_\Gamma$, $B_{\Gamma'}$ be the corresponding Brauer graph algebras. Denote by $B_\Gamma^{(A)}$ and $B_{\Gamma'}^{(A)}$, the deformed algebras of type (A). If $B_\Gamma$ and $B_{\Gamma'}$ are derived equivalent, then the two algebras $B_\Gamma^{(A)}$ and $B_{\Gamma'}^{(A)}$ are semisimple and Morita equivalent.
	\end{proposition}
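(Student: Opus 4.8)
The plan is to reduce the whole statement to counting matrix factors, since Morita equivalence of semisimple algebras over an algebraically closed field is governed by a single number. First I would invoke Proposition \ref{semi A}: it already asserts that each of $B_\Gamma^{(A)}$ and $B_{\Gamma'}^{(A)}$ is semisimple, being isomorphic to a finite direct product of matrix algebras over $\kk$. This settles the semisimplicity claim at once. Moreover, the explicit description in Proposition \ref{semi A} identifies the number of matrix blocks of $B_\Gamma^{(A)}$ — equivalently its number of isomorphism classes of simple modules — with the number $|V|$ of vertices of $\Gamma$, and likewise $|V'|$ for $\Gamma'$. Since $\kk$ is algebraically closed, the Artin--Wedderburn theorem writes $B_\Gamma^{(A)} \cong \prod_i M_{n_i}(\kk)$, and two such semisimple algebras are Morita equivalent if and only if they have the same number of factors. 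Thus the entire proposition comes down to the equality $|V| = |V'|$.

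Next I would show that $|V|$ is a derived invariant of $B_\Gamma$, using two ingredients. The first is the classical fact that a derived equivalence between finite-dimensional algebras preserves the number of isomorphism classes of simple modules (equivalently the rank of $K_0$ of the bounded derived category); applied here this gives $|E| = |E'|$, because for a connected Brauer graph algebra the simple modules are indexed by the edges of $\Gamma$. The second ingredient is that derived equivalence of Brauer graph algebras preserves the topological type of the associated ribbon surface — in particular its genus and number of boundary components — by the surface-model classification of Opper and Zvonareva \cite{OZ}; consequently $\mathrm{rank}(\mathrm{H}^1(\Sigma)) = \mathrm{rank}(\mathrm{H}^1(\Sigma'))$. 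Combining these with the relation $\mathrm{rank}(\mathrm{H}^1(\Sigma)) = |E| - |V| + 1$ recorded above (valid since the connected graph $\Gamma$ is a deformation retract of $\Sigma$) yields
\[
|V| = |E| + 1 - \mathrm{rank}(\mathrm{H}^1(\Sigma)) = |E'| + 1 - \mathrm{rank}(\mathrm{H}^1(\Sigma')) = |V'|.
\]
This is the statement I would isolate as Corollary \ref{pre der}, after which the Morita equivalence follows from the reduction in the previous paragraph.

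The step I expect to be the main obstacle is precisely the derived-invariance of $|V|$ packaged in Corollary \ref{pre der}. The number $|V|$ is not an intrinsic invariant of the module category $\mathrm{mod}\,B_\Gamma$, so its preservation cannot be read off from a generic homological invariant such as the rank of $K_0$ alone; it genuinely relies on the geometric derived classification, which forces a homeomorphism of surface models respecting the data that determine genus and boundary components. I would therefore be careful to insert the non-locality hypothesis exactly where it is needed: the clean correspondence between matrix blocks and vertices furnished by Proposition \ref{semi A}, and the edge-count/Betti-number bookkeeping, are the statements I would want to check against the degeneracies flagged in the local case of the theorem. Once $|V| = |V'|$ is established, no further computation is required: semisimplicity comes from Proposition \ref{semi A} and the Morita equivalence from the equal block count.
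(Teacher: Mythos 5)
There is a genuine gap: you have misread what Proposition \ref{semi A} gives. The deformed algebra is
\[
B_\Gamma^{(A)}\;\cong\;\prod_{v\in V(\Gamma)}\mathrm{M}_{\mathrm{val}(v)}(\kk)^{\m(v)},
\]
so the number of matrix blocks (equivalently, of isomorphism classes of simple modules) is $N=\sum_{v\in V(\Gamma)}\m(v)$, not $|V|$; these agree only in the multiplicity-free case. Consequently your reduction of the Morita equivalence to the single equality $|V|=|V'|$ is insufficient: you must also show that the multiplicities match up, i.e.\ $\sum_{v\in V}\m(v)=\sum_{v'\in V'}\m(v')$, and nothing in your argument (derived invariance of the number of simples, giving $|E|=|E'|$, plus invariance of $\mathrm{rank}\,\mathrm{H}^1(\Sigma)$) controls the multiplicity function. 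A bipartite Brauer graph and the same graph with a multiplicity raised at one vertex would pass all the checks you perform, yet their type (A) deformations have different numbers of blocks.

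The paper closes this gap by citing \cite[Theorem 1.2]{AZ}: a derived equivalence between Brauer graph algebras forces the number of vertices \emph{and} the multi-set of multiplicities of $\Gamma$ and $\Gamma'$ to coincide, whence $N=N'$ and both deformed algebras are Morita equivalent to $\kk^N$. This also makes your surface-model detour for $|V|=|V'|$ (via $|E|-|V|+1=\mathrm{rank}\,\mathrm{H}^1(\Sigma)$ and the Opper--Zvonareva classification) unnecessary, although that part of your reasoning is in itself plausible. Your handling of the semisimplicity claim and of the criterion for Morita equivalence of split semisimple algebras (same number of blocks) is fine; the fix is simply to replace the invariant $|V|$ by $\sum_v\m(v)$ and to justify its derived invariance by the Antipov--Zvonareva result (or an equivalent argument covering multiplicities).
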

   The bipartite case also suggests that many potential phenomena may occur in the deformation theory of Brauer graph algebras.

First, we expect that semisimple deformations as in the above proposition exist whenever the line field in the surface model is orientable, even if the Brauer graph $\Gamma$ is not bipartite; see Example \ref{exa:graded-simple}. We expect that such semisimple deformation exists as long as the line field of the surface model is orientable, even though $\Gamma$ is not bipartite, see Example \ref{exa:graded-simple}. 
	
	Second, although we expect that skew-Brauer graph algebras (see for example in \cite{EGV,Soto}) can be obtained as deformations of Brauer graph algebras---motivated by the fact that skew-gentle algebras arise as deformations of gentle algebras---this phenomenon does not occur for trivially graded bipartite Brauer graph algebras. Nevertheless, Example \ref{example:annuli-with-2-punctures} suggests that it may still appear in some non-bipartite cases.
	
It is worth to note that, in recent studies, Brauer graph algebras are also related with Ginzburg dg algebras. Their relations are given by the following diagram.
	
	$$
	\begin{tikzcd}
		\textbf{Gentle algebras} \arrow[dd, "\text{Trivial extension (\cite[Section 3]{Sch})}"', Rightarrow] \arrow[rrrr, "\text{Koszul dual (\cite[Section 3.3]{BH})}", Leftrightarrow] &  &  && \textbf{Gentle algebras} \arrow[dd, "\text{Calabi-Yau completion (\cite[Section 4]{IQ})}"', Rightarrow] \\
		&  & & &                                \\
		\textbf{Brauer graph algebras} \arrow[rrrr, "\text{``Koszul dual'' (\cite[Section 3]{CHQ})}",  Leftrightarrow]   &  &&  &        \textbf{Ginzburg dg algebras}                      
	\end{tikzcd}$$
	Here, by ``Koszul dual'' we mean that a Brauer graph algebra is Koszul dual to a certain cyclic quotient of the associated Ginzburg dg algebra arising from the surface model, rather than to the Ginzburg dg algebra itself. However, since the Ginzburg dg algebras are differential graded and always infinite dimensional, it is  in general difficult to discuss the deformation theory of them. In some special case of Ginzburg dg algebras, for example, Liu and Wang \cite{LW} have given some description about their $A_\infty$-deformations. Therefore, it is quite natural to consider the deformation theory through Brauer graph algebras. 

	In order to better understand the relationship between deformations of Brauer graph algebras and  Ginzburg dg algebras, it would be natural to study the second Hochschild cohomology of Brauer graph algebras with grading. Such a study, however, would require a thorough understanding of Hochschild cohomology in all degrees for Brauer graph algebras with the trivial grading—a problem that involves substantial and technically demanding computations and is unlikely to be resolved in the near future. Rather than pursuing this full program, the present work takes a more modest but concrete first step: we focus on the  ungraded setting and investigate structural properties of the second Hochschild cohomology.  We hope this contributes toward a longer-term program of exploring deformations of graded  Brauer graph algebras. See Examples \ref{exa:bound of graded} and \ref{exa:graded-simple} for the discussions on the Hochschild cohomology in the graded case.
	
	\medskip
	\textbf{Outline.}\; In Section \ref{sec:2}, we review some basic definitions of bipartite Brauer graph algebras and their surface models. In Section \ref{sec:3}, we construct the reduction systems of bipartite Brauer graph algebras and give the description about their second Hochschild cohomology groups. In particular, for each bipartite Brauer graph algebra $A$, we give a formula of the dimension of $\hh^2(A)$. In Section \ref{sec:4}, we give geometric explanations of these deformations on the surface models of Brauer graph algebras. In Section \ref{sec:5}, we give some examples when Brauer graphs are not bipartite. In Appendix \ref{appendix:2-cocycle} and Appendix \ref{appendix:coboundary}, we compute the $2$-cocycles and $2$-coboundaries from the general forms of $2$-cochains given in Subsection \ref{sec:differential}.

	\section{Brauer graph algebras and surface models}\label{sec:2}
	
	Brauer graph algebras originate in the modular representation theory of finite groups and form an important class of finite dimensional tame algebras (for a survey on Brauer graph algebras, see \cite{SS}). In this section, we give a review on the basic notions of (bipartite) Brauer graph algebras. For details we refer to \cite{OZ,SS}.
	
	Throughout this paper we will concentrate on quiver algebras of the form $\kk Q/I$, where $\Bbbk$ is an algebraically closed field of characteristic $0$, $Q$ is a finite quiver, and $I$ is a two-sided ideal in the path algebra $\kk Q$. For each integer $n\geq 0$, we denote by $Q_n$ the set of all paths of length $n$ and by $Q_{\geq n}$ the set of all paths with length at
	least $n$. We denote by $o(p)$ the origin vertex of a path $p$ and by $t(p)$ its terminus vertex. We will write paths from right to left, for example, $p=\alpha_{n}\alpha_{n-1}\cdots\alpha_{1}$ is a path with the starting arrow $\alpha_{1}$ and the ending arrow $\alpha_{n}$.  
	The length of a path $p$ will be denoted by $l(p)$. 
	
	\subsection{Review on Brauer graph algebras}
	
	%We review the basic notions on Brauer graphs and Brauer graph algebras. 
	\subsubsection{Brauer graphs}
	\begin{definition}
		A ribbon graph is a tuple $\Gamma=(V,H,s,\iota,\rho)$, where
		\begin{enumerate}[(1)]
			\item $V$ is a finite set whose elements are called vertices;
			
			\item $H$ is a finite set whose elements are called half-edges;
			
			\item $s: H\rightarrow V$ is a function;
			
			\item $\iota: H\rightarrow H$ is an involution without fixed points;
			
			\item $\rho: H\rightarrow H$ is a permutation whose cycles correspond to the sets $H_v:=s^{-1}(v)$, $v\in V$.
			
		\end{enumerate}
	\end{definition}
	
	Therefore, every ribbon graph defines a graph with vertex set $V$ whose edges are the orbits of $\iota$. For instance, the edge $\bar h:=\{h, \iota(h)\}$ is incident to the vertices $s(h)$ and $s(\iota(h))$. In fact, every ribbon graph gives rise to a surface by thickening the graph (see for example in \cite[Section 1.1]{OZ}).
	
	For a ribbon graph   $\Gamma=(V,H,s, \iota, \rho)$ we need the following notations. For each vertex $v \in V$ we denote 
	\[
	H_v=\{h \in H \mid  s(h)=v\},
	\]
	the set of half edges incident to $v$ and denote by $\mathrm{val}(v)$ the valency of the vertex $v\in V$, namely
	\[
	\mathrm{val}(v)= |H_v|.
	\]
	In particular, a loop is counted twice in $\mathrm{val}(v).$  Denote by $$\bar h=\{h, \iota(h)\}$$ the edge associated to $h$ in the graph induced by $\Gamma$. The set of all edges is denoted by $E(\Gamma)$.  
	For each half-edge $h\in H$ we denote 
	\[
	h^+ = \rho (h) \;\; \text{and}\;\;  h^-= \rho^{-1}(h)
	\]
	the successor and predecessor of $h$ respectively.
	
	Unless stated otherwise, we will assume that $\Gamma$ is connected, that is, its underlying graph is connected.
	
	\begin{definition}\label{BG}
		A Brauer graph is a pair $(\Gamma,\mathbbm{m})$ consisting of a ribbon graph $\Gamma =(V,H,s, \iota, \rho)$ and a function $\mathbbm{m}: V \rightarrow \mathbb{Z}_{>0}$.
	\end{definition}
	
	That is, a Brauer graph is simply a ribbon graph with  positive integers (that is, multiplicities) assigned to vertices. The function $\mathbbm{m}$ in Definition \ref{BG} is referred to as the multiplicity function and its values as multiplicities. %Frequently, we omit $\mathbbm{m}$ from the notation and refer to $\Gamma$ as a Brauer graph. We denote by $\textbf{n}$ any constant multiplicity function with value $n$ at each vertex and 
	We say that a Brauer graph $(\Gamma,\mathbbm{m})$ is {\it multiplicity-free} if $\mathbbm{m}(v)=1$ for each $v \in V$. In particular,  we call a given vertex $v \in V$ is {\it truncated} if $\m(v)=1 $ and $\mathrm{val}(v)=1$.
	
	\subsubsection{Brauer graph algebras}\label{subsection:def-of-BGA}
	Let $(\Gamma,\mathbbm{m})$ be a Brauer graph and $\Bbbk$ be a base field.  One can associate a quiver $Q=Q_\Gamma$ and an ideal $I=I_\Gamma$ generated by some relations in the path algebra $\mathbbm{k}Q$ as follows. 
	
	\begin{enumerate}[(1)]
		\item The vertices of $Q$ correspond to the edges $E(\Gamma)$ of $\Gamma$. For every half-edge  $h\in H$, there is an arrow in $Q$ $$\alpha_h:\bar{h}\rightarrow\bar{h^+}$$ from the edge $\bar h$ to the edge $\bar{h^+}$ associated to the successor $h^+$ (see Figure \ref{fig1}). That is, $\alpha_h$ may be understood as the angel around the vertex $s(h)\in V$ starting from the half-edge $h$ and ending at $h^+$, which induces a one-to-one correspondence between the set $Q_1$ of arrows and the set $H$ of half edges.  We call the arrow $\alpha_h\in Q_1$  an arrow around the vertex $s(h)$.

		Note that  we have a natural permutation 
		\[
		\sigma \colon Q_1 \to Q_1, \alpha_h\mapsto \sigma(\alpha_h):=\alpha_{h^-}
		\]
		whose orbits are in bijection with $V$. Hence to each arrow $\alpha= \alpha_h$ we may associate a cycle
		$$C_\alpha=\alpha\sigma(\alpha)\cdots\sigma^l(\alpha)$$
		where $l+1$ $(=\mathrm{val}(s(h)))$ denotes the cardinality of the $\sigma$-orbit of $\alpha$ (see Figure \ref{fig1}). Every vertex (e.g. corresponding to the edge $\bar h$) of $Q$ is the starting point of exactly two cycles (e.g. $C_{\alpha_h}$ and $C_{\alpha_{\iota(h)}}$). For simplicity, we define the multiplicity of the cycle $C_{\alpha_h}$ as the multiplicity of the vertex $s(h)$ and write $$\m(C_{\alpha_h}) =\m(s(h)),$$ with a slight abuse of notation. (Note that we adopt the sign rule in \cite{OZ}, that is, $\alpha_h$ is starting from $\bar{h}$, and the ending arrow of $C_{\alpha_h}$ is $\alpha_h$.)

		\begin{figure}[ht]
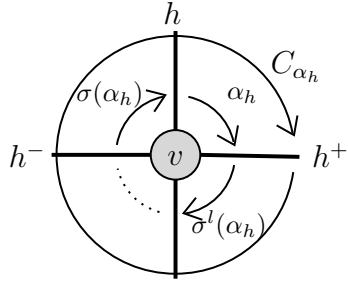

			\centering
			
			\tikzset{every picture/.style={line width=0.75pt}} %set default line width to 0.75pt        
			
			% [inline block 0: 1 envs, 3965 chars -> data_tex | \begin{tikzpicture}[x=0.75pt,y=0.75pt,yscale=-1,xscale=1] 				%uncomment if require: \path (0,235); %set diagram left st...]


			\caption{Arrows in the quiver $Q_\Gamma$.}
			\label{fig1}
		\end{figure}
		
		\item The ideal $I_\Gamma$ is generated by the following set of relations:
		\begin{itemize}
			\item For any composable arrows $\alpha$ and $\beta$ such that  $ \beta \neq \sigma(\alpha)$ (see the left of Figure \ref{fig2}), we have the monomial relation $$\alpha\beta=0.$$
			This type of relation is similar to the one in gentle algebras. 
			
			\item For each edge $\bar h$, we have the relation $$C_\alpha^{\m(C_\alpha)}=C_\beta^{\m(C_\beta)},$$
			where $\alpha,\beta\in Q_1$ and $t(\alpha)=t(\beta) =\bar h$, that is $\alpha$ and $\beta$ end at the same edge $\bar h \in E(\Gamma)$  (see the right of Figure \ref{fig2}). Here $$C_\alpha^{\m(C_\alpha)}= \underbrace{C_\alpha C_\alpha\dotsb C_\alpha}_{\m(C_\alpha)}$$ and $\m(C_\alpha)$ is the multiplicity of the vertex which the cycle $C_\alpha$ is around. 
			
			\begin{figure}[ht]
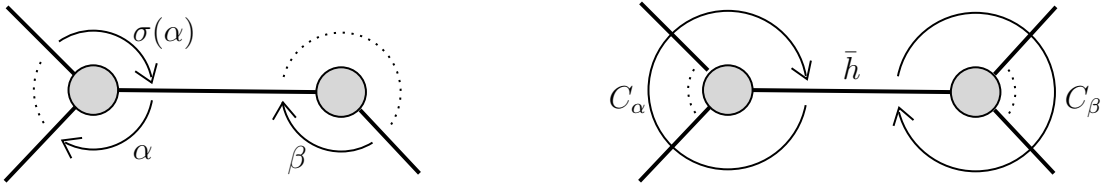

				\centering

				\tikzset{every picture/.style={line width=0.75pt}} %set default line width to 0.75pt  
				
				% [inline block 1: 1 envs, 8362 chars -> data_tex | \begin{tikzpicture}[x=0.75pt,y=0.75pt,yscale=-1,xscale=1] 					%uncomment if require: \path (0,235); %set diagram left s...]

				\caption{The two types of relations in $I_\Gamma$.}
				\label{fig2}
			\end{figure}
			
		\end{itemize}
	\end{enumerate}
	
	The resulting finite dimensional algebra $\mathbbm{k}Q_\Gamma/I_\Gamma$ will be denoted by $B_\Gamma$. Indeed, $B_\Gamma$ is special biserial (see for example\ \cite{SS}), in particular, at each vertex in the quiver $Q_\Gamma$ there are at most two incoming arrows and at most two outgoing arrows. %By a path of $B_\Gamma$ we mean a path of $Q_\Gamma$ which is not contained in $I_\Gamma$. Note that every path of $B_\Gamma$ is a subpath of a cycle $C_\alpha^{\m(C_\alpha)}$ for some arrow $\alpha$.

	Let $\gamma=\alpha\sigma(\alpha)\cdots\sigma^i(\alpha)$ be a non-trivial path in $B_\Gamma$. We will denote by $\gamma^*$ the path of $B_\Gamma$ which complements $\gamma$ to a maximal non-zero path, that is $\gamma\gamma^*=C_\alpha^{\m(C_\alpha)}$.
	
	\begin{definition}
		A $\mathbbm{k}$-algebra $B$ is called a Brauer graph algebra if there exists a Brauer graph $(\Gamma,\m)$ such that $B\cong B_\Gamma$ as $\mathbbm{k}$-algebras.
	\end{definition}
	
	\begin{remark}\label{remark:truncated}
		\begin{enumerate}[(1)]
		\item Note that for each truncated vertex $v$ (that is,\ $\m(v)=1 =  \mathrm{val}(v)$) in $\Gamma$, there is only one arrow $\beta$ around $v$ so that  the second type of relation becomes non-admissible: $C_\alpha^{\m(C_\alpha)}=\beta$. We usually drop this relation by deleting the loop $\beta$ from the quiver $Q_\Gamma$  and replacing $\beta$ by $C_\alpha^{\m(C_\alpha)}$ in the first type of relation, for instance $\beta \gamma=0$ is replaced by  $C_\alpha^{\m(C_\alpha)} \gamma=0$. 
	\item According to the preceding remark, a Brauer graph algebra $A = kQ_{\mathrm{adm}}/I_{\mathrm{adm}}$ can also be defined in terms of its Gabriel quiver $Q_{\mathrm{adm}}$ and an admissible ideal $I_{\mathrm{adm}}$. The quiver $Q_{\mathrm{adm}}$ is obtained from $Q_\Gamma$ by deleting all loops corresponding to truncated vertices. The admissible ideal $I_{\mathrm{adm}}$ is generated by the following three types of relations:

\begin{enumerate}
  \item[\textnormal{(I)}] $\alpha\beta = 0$ for all paths $\alpha\beta$ with $\beta \neq \sigma(\alpha)$;
  
  \item[\textnormal{(II)}] $C_\alpha^{m(C_\alpha)} = C_\beta^{m(C_\beta)}$,
  whenever $t(\alpha) = t(\beta) = \bar h$ and the two vertices incident to the edge $\bar h$ are both non-truncated;
  
  \item[\textnormal{(III)}] $C_\alpha^{m(C_\alpha)} \alpha = 0$, for any arrow $\alpha$ in $Q_{\mathrm{adm}}$.
\end{enumerate}
(For explicit definitions of the notation, see for example \cite{SS}.)
We refer to these three classes of relations as the {\it defining relations of type I, II, and III}, respectively. According to \cite[Proposition 4.3]{LX}, these relations form a Gr\"{o}bner basis (not necessarily reduced) of the admissible ideal $I_{\mathrm{adm}}$.
       \end{enumerate}
\end{remark}

	We recall that a ribbon graph $\Gamma$ is {\it bipartite} if its set of vertices $V$ admits a partition $$V=V_1\sqcup V_2$$ into disjoint subsets $V_1$ and $V_2$ such that every edge connects a vertex in $V_1$ with a vertex in $V_2$. In other words, no edges connect verteices in $V_1$ (resp.\ in $V_2$).  By \cite[Theorem 4.7]{BM}, a graph is bipartite if and only if every cycle of $\Gamma$ has even length. For convenience, denote the bipartite graph by $\Gamma[V_1,V_2]$. We call a Brauer graph algebra is bipartite if its associated Brauer graph is bipartite.
	Actually, by \cite[Theorem 7.12]{OZ}, bipartite Brauer graph algebras are closed under derived equivalence. 
	
	\subsection{Brauer graph $A_\infty$-categories and surface models}
	
	In this section, we recall some definitions of the Brauer graph $A_\infty$-categories associated with Brauer graphs in graded surfaces introduced in \cite[Section 4]{OZ}. 
	%Due to space constraints, we omit a review of the basic definitions of $A_\infty$-categories. 
We refer the reader to \cite{HKK,KS,LP,OZ} for more details on  the $A_\infty$-categories associated to surface models. We begin with the definition of modified Brauer graph algebras.
	
	\begin{definition}\label{defintion:modified}
		Let $(\Gamma,\m)$ be a Brauer graph and let $\omega:E(\Gamma)\rightarrow\mathbb{Z}$ be a function which assigns an integer to each edge of $\Gamma$. The modified Brauer graph algebra, associated to $(\Gamma,\m,\omega)$ is defined as $B\cong \kk Q_\Gamma/I_\Gamma^\omega$, where $I_\Gamma^\omega$ is generated by the following relations:
		\begin{itemize}
			\item $\alpha\beta$,
			where $\alpha,\beta\in Q_1$ are composable and $\sigma(\alpha)\neq \beta$;
			
			\item $C_\alpha^{\m(C_\alpha)}-(-1)^{\omega(\bar{h})}C_\beta^{\m(C_\beta)}$,
			where $\alpha,\beta\in Q_1$, $t(\alpha)=t(\beta)$, $\alpha=\alpha_{h^-}$ and $\beta=\alpha_{\iota(h)^-}$.
		\end{itemize}
		If $\omega=0$, we recover the usual definition of a Brauer graph algebra as in Section \ref{subsection:def-of-BGA}.
	\end{definition}
	
	We recall from \cite{OZ} the notions on punctured surfaces and arc systems below.
	\begin{definition}
		A punctured surface is a pair $(\Sigma, \mathscr{P})$ where $\Sigma$ is a connected and compact oriented surface with non-empty boundary $\partial\Sigma$ and $\mathscr P$ is a finite set of  interior points (called punctures) in $\Sigma.$
	\end{definition}
	
	\begin{definition}
		An arc system on $(\Sigma, \mathscr{P})$  is a finite set $\mathcal{A}=\{\gamma_1,\cdots,\gamma_n\}$ of simple arcs in $\Sigma$ starting and ending at $\mathscr P$ such that for all $i\neq j$, $\gamma_i$ and $\gamma_j$ are non-homotopic and only intersect transversely in $\mathscr{P}$.
	\end{definition}
	
	For any arc system $\mathcal{A}$ of $\Sigma$, we denote by $|\mathcal{A}|\subseteq\Sigma$ the union of all its arcs. Note that to $\mathcal A$ we may associate a ribbon graph $\Gamma_{\mathcal A}$ whose vertices are $\mathscr P$ and whose edges are $\mathcal A.$
	
	\begin{definition}
		An arc system $\mathcal{A}$ is called full if $\Sigma \setminus |\mathcal{A}|$ is a disjoint union of discs with open boundaries and annuli, where each annulus contains exactly one boundary component $B$ of $\Sigma$ such that $B \cap \mathscr{P} = \emptyset$.

		Furthermore, the arc system is said to be admissible if for every puncture $p \in \mathscr{P}$, there exists an embedded path
		$$c_p:[0,1]\rightarrow (\Sigma\backslash|\mathcal{A}|)\cup\{p\},$$
		such that $c_p(0)=p$ and $c_p(1)\in \partial\Sigma$.
	\end{definition}

	\begin{definition}
		Let $(\Sigma,\mathscr{P})$ be a punctured surface. A line field is a smooth section $\eta:\Sigma\backslash\mathscr{P}\rightarrow \mathbb{P}(T\Sigma)$, where $\mathbb{P}(T\Sigma)$ denotes the projectivized tangent bundle of $\Sigma$. The triple $(\Sigma,\mathscr{P},\eta)$ is called a graded (punctured) surface. Two line fields $\eta_0$ and $\eta_1$ are called homotopic ($\eta_0\simeq\eta_1$) if they are homotopic as maps $\Sigma\backslash\mathscr{P}\rightarrow\mathbb{P}(T\Sigma)$.
	\end{definition}

	We illustrate the above concepts with the following example.

	\begin{example}
		Consider the disk $\Sigma$ with one boundary component as shown in Figure \ref{fig:simple-example-of-arc-sys}, where the set of punctures $\mathscr{P}$ consists of two distinct interior points within $\Sigma$. 
				\begin{figure}[ht]
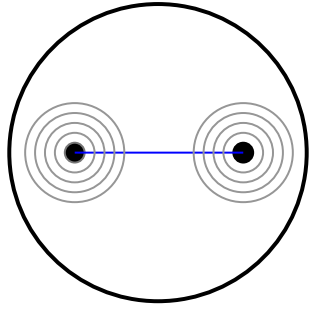

			\centering

\tikzset{every picture/.style={line width=0.75pt}} %set default line width to 0.75pt        

% [inline block 2: 1 envs, 4065 chars -> data_tex | \begin{tikzpicture}[x=0.75pt,y=0.75pt,yscale=-1,xscale=1] %uncomment if require: \path (0,235); %set diagram left start ...]


			\caption{An arc system on a disk with two punctures}
			\label{fig:simple-example-of-arc-sys}
		\end{figure}
		An arc system $\mathcal{A}$ on this punctured surface is given by the blue arc. If $\mathcal{A}$ is viewed as a ribbon graph $\Gamma_\mathcal{A}$, then $\Gamma_\mathcal{A}$ can be regarded as a deformation retract of $\Sigma$. This arc system is full because $\Sigma\backslash|\mathcal{A}|$ is an annulus with one open boundary and one closed boundary. A line field on it can be interpreted as the equipotential lines (as illustrated by the gray loops) generated by placing equal and opposite charges at the two punctures.
	\end{example}
	
Actually, each ribbon graph $\Gamma$ corresponding to an admissible arc system in $\Sigma$ naturally induces a line field $\eta_\Gamma$, called a line field of ribbon type (\cite[Example~3.7]{OZ}). In \cite[Section~3.2.3]{OZ}, given a line field $\eta$, arcs can be endowed with an additional grading structure, which allows us to assign to each intersection point
\[
p \in \gamma \vec{\cap} \delta,
\]
(where $p \in \mathscr{P}$ and, under the orientation of $\Sigma$, $p$ is an intersection from the arc $\gamma$ to the arc $\delta$) an integer called the degree of $p$. Such an arc is referred to as a graded arc.

Moreover, in \cite[Section~3.3]{OZ}, an integer called the winding number of an arc $\gamma$, denoted $\omega(\gamma):=\omega_\eta(\gamma)$, is also introduced. This winding number determines the function $\omega$ appearing in the definition of modified Brauer graph algebras.

After all these settings, we can finally define the $A_\infty$-categories associated with (modified) Brauer graph algebras in \cite{OZ}.
	
	\begin{definition}\label{BGC}
		Let $\mathcal{A}$ be a graded admissible arc system on a graded punctured surface $(\Sigma,\mathscr{P},\eta)$ with a line field of ribbon type. Let $\m$ be a multiplicity function on $\mathscr{P}$. The Brauer graph category $\mathbb{B}=\mathbb{B}(\mathcal{A},\m)$ is the following $A_\infty$-category.
		\begin{itemize}
			\item \textbf{Objects:} The set $\{X_\gamma\}_{\gamma\in\mathcal{A}}$ is the set of objects of $\mathbb{B}$.
			
			\item \textbf{Morphisms:} Denote by $B_\mathcal{A}$ the modified Brauer graph algebra of $(\Gamma_\mathcal{A},\m,\omega)$, where $\omega(\gamma):=\omega_\eta(\gamma)$ for all $\gamma\in\mathcal{A}=E(\Gamma_\mathcal{A})$. An arc $\gamma$ corresponds to a vertex of the quiver $Q_{\Gamma_\mathcal{A}}$. Given graded arcs $\gamma,\delta\in\mathcal{A}$, the set of equivalence classes of paths (that is, paths under the canonical surjection 
$\pi: \Bbbk Q_{\Gamma_\mathcal{A}} \twoheadrightarrow B_\mathcal{A}$) from the vertex 
$\gamma$ to the vertex $\delta$ in $B_\mathcal{A}$ forms a $\Bbbk$-basis of 
$\operatorname{Hom}_{\mathbb{B}}(X_\gamma, X_\delta)$.

			\item \textbf{Gradings:} Every path is a homogeneous morphism whose degree equals to the sum of the degrees of its arrows. An arrow $a\in\ho_\mathbb{B}(X_\gamma,X_\delta)$ which corresponds to an intersection $p\in\gamma\vec{\cap}\delta$ at a puncture has degree $|a|:=deg(p)$. The degree of the loop corresponding to a pair of half edges $(h,h)$ at a vertex with valency $1$ is $0$.
			
			\item \textbf{Composition:} For classes of paths $a\in\ho_\mathbb{B}(X,Y)$, $b\in\ho_\mathbb{B}(Y,Z)$ set
			$$\mu_2(b,a):=(-1)^{|a|}ba,$$
			where $ba$ denotes the associated product in $B_\mathcal{A}$.
			
			\item \textbf{Higher operations:} Let $D$ be a disc with a set $\mathcal{M}\subseteq\partial D$ of $n$ marked points. By a marked disc on $\Sigma$ we mean a continuous map $D\rightarrow\Sigma$ which is a smooth immersion on $D\backslash\mathcal{M}$ and which sends points in $\mathcal{M}$ to points in $\mathscr{P}$ and boundary arcs of $D$ to arcs of $\mathcal{A}$. Denote by $p_i\in\delta_i\vec{\cap}\delta_{i+1}$ $(i\in\mathbb{Z}_n)$ its associated sequence of oriented intersections between arcs of $\mathcal{A}$. The corresponding sequence of morphisms $a_n,\cdots,a_1$ will be called a disc sequence. Let $b$ be a path, then
			$$\mu_n(ba_n,\cdots,a_1)=b, \;\text{for $ba_n\neq0$};$$
			$$\mu_n(a_n,\cdots,a_1b)=(-1)^{|b|}b, \;\text{for $a_1b\neq0$};$$
			$$\mu_n(a_n,\cdots,a_{r+1},a_r(ba_r)^*,ba_r,a_{r-1},\cdots, a_2)=(-1)^\circ a_1^*,\;\text{for $ba_r\neq 0$,}$$
			where $\circ=|a_1|+|a_2|\cdots+|a_{r-1}|+|ba_r|+\omega_\eta(\delta_2)+\cdots+\omega_\eta(\delta_r)$.
			Higher operations vanish for all sequences of elements which are not of the form above.
		\end{itemize}
	\end{definition}

	If we regard punctures as vertices and arcs as edges, then each admissible arc system $\mathcal{A}$ of a punctured surface can be regarded as a ribbon graph $\Gamma_\mathcal{A}$. The corresponding Brauer graph $A_\infty$-category $\mathbb{B}(\mathcal{A}, m)$ can be regarded as a generalization of the Brauer graph algebra $B_{\Gamma_\mathcal{A}}$.

	\section{The second Hochschild cohomology groups of bipartite Brauer graph algebras}\label{sec:3}
	
	\subsection{Reduction systems of Brauer graph algebras}\label{subsection:reductionsystem}
	
	We first recall from \cite[Section 1]{Ber} and \cite[Section 2]{CS} the definition of a reduction system for a quiver with relations.
	
	\begin{definition}\label{def:red-sys} Let $Q$ be any finite quiver. 
		A reduction system $R$ for $\kk Q$ is a set of pairs
		$$R=\{(s,\varphi_s)\;|\; \text{for each $s\in S$ pick one element $\varphi_s\in \kk Q$}\}$$
		where
		\begin{itemize}
			\item $S$ is a subset (called tips) of $Q_{\geq 2}$ such that $s$ is not a subpath of $s'$ when $s\neq s'\in S$;
			
			\item for all $s\in S$, $s$ and $\varphi_s$ are parallel, that is, $o(s)=o(\varphi_s)$ and $t(s)=t(\varphi_s)$;
			
			\item for each $(s,\varphi_s)\in R$, $\varphi_s$ is irreducible, that is, it is a linear combination of irreducible paths.
		\end{itemize}
		Here a path is irreducible if it does not contain elements in $S$ as a subpath and we denote by $\irr_S=\irr_S(Q)$ the set of all irreducible paths.
		
		Given a two-sided ideal $I$ of $\kk Q$, we say that a reduction system $R$ satisfies the diamond condition ($\diamond$) for $I$ if the following two conditions hold:
		
		\begin{enumerate}[(i)]
			\item $I$ is equal to the two-sided ideal generated by the set $\{s-\varphi_s\}_{(s,\varphi_s)\in R}$;
			
			\item every path $p$ is reduction-unique, meaning that for any distinct reduction sequences of $p$ via $R$, the resulting irreducible paths are all equal. (For details, one may refer to \cite[Definition 3.3]{BW}.)
		\end{enumerate}
		
		%We call a reduction system $R$ finite if $R$ is a finite set.
	\end{definition}
	
	By Bergman's Diamond Lemma (see \cite[Theorem 1.2]{Ber}), when a reduction system $R$ satisfies the diamond condition ($\diamond$) for $I$, the set $\irr_S=\irr_S(Q)$ of all irreducible paths gives a $\kk$-basis of $\kk Q/I$ through the natural projection $\kk Q \twoheadrightarrow \kk Q/I$.

The reduction system is closely related to the notion of Gr\"{o}bner basis introduced in \cite{Green}. Indeed, a reduced Gr\"{o}bner basis induces a reduction system which satisfies the condition ($\diamond$). However, in general a Gr\"{o}bner basis does not induce immediately a reduction system (in the above sense) since some tips may be contained in other tips as a subpath. Note also that for any Brauer graph algebra $B_\Gamma=\Bbbk Q_\Gamma /I_\Gamma$, a Gr\"{o}bner basis was constructed in \cite[Proposition 4.3]{LX}, which includes the two types of generating relations (as shown in Subsection \ref{subsection:def-of-BGA}), along with some redundant relations. This Gr\"{o}bner basis does not induce a reduction system since some tips are contained in other tips.
	
	In the following, we show that for each bipartite Brauer graph algebra there is a natural way to construct a reduction system which makes the computation for the Hochschild cohomology and deformations explicit. 
	
	Now we define a reduction system for a given bipartite Brauer graph algebra. Let $\Gamma = \Gamma[V_1, V_2]$ be a bipartite Brauer graph, and let $B_\Gamma = \Bbbk Q_\Gamma / I_\Gamma$ be the associated Brauer graph algebra. Note that, in general, the ideal $I_\Gamma$ is not necessarily admissible (see Remark~\ref{remark:truncated}). For our purpose, we give another presentation of $\kk Q_\Gamma/I_\Gamma$ as follows. We remove the loops in $Q_\Gamma$ that are attached to truncated vertices in $V_1$, together with the corresponding relations, as explained in Remark~\ref{remark:truncated}, while keeping the loops attached to vertices in $V_2$. We denote the resulting quiver and ideal by $Q_\Gamma'$ and $I_\Gamma' := I_\Gamma \cap \Bbbk Q_\Gamma'$, respectively. Then from Remark \ref{remark:truncated} we have the following result. 
	
	\begin{lemma}\label{iso}
		We have an isomorphism	of  $\Bbbk$-algebras $$\kk Q_\Gamma/I_\Gamma\cong \kk Q'_\Gamma/I'_\Gamma.$$ %Therefore, $\kk Q'_\Gamma/I'_\Gamma$ is the Brauer graph algebra associated with $\Gamma$.
	\end{lemma}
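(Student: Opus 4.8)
The plan is to realize the claimed isomorphism as the map induced by the obvious inclusion of path algebras and to read off both its injectivity and its surjectivity essentially from the definition of $I'_\Gamma$ together with Remark~\ref{remark:truncated}; in this sense the lemma is just a version of that remark localized to the part $V_1$. Since $Q'_\Gamma$ is obtained from $Q_\Gamma$ by deleting only the loops $\beta_v$ sitting at the truncated vertices $v\in V_1$, the path algebra $\kk Q'_\Gamma$ is a subalgebra of $\kk Q_\Gamma$ (same vertices, fewer arrows). Composing this inclusion with the projection $\pi\colon\kk Q_\Gamma\twoheadrightarrow \kk Q_\Gamma/I_\Gamma$ and using $I'_\Gamma=I_\Gamma\cap\kk Q'_\Gamma\subseteq I_\Gamma$, one obtains a well-defined algebra homomorphism
\[
\psi\colon \kk Q'_\Gamma/I'_\Gamma \longrightarrow \kk Q_\Gamma/I_\Gamma ,
\]
and the goal is to prove that $\psi$ is bijective. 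Injectivity is immediate from the way $I'_\Gamma$ is defined: if $x\in\kk Q'_\Gamma$ represents a class in $\ker\psi$, then $x\in I_\Gamma$, hence $x\in I_\Gamma\cap\kk Q'_\Gamma=I'_\Gamma$ and its class already vanishes. Thus no computation is needed here, and the whole content of the lemma lies in surjectivity.

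For surjectivity I would use that $\kk Q_\Gamma/I_\Gamma$ is generated as a $\kk$-algebra by the images of the vertices and arrows of $Q_\Gamma$, all of which lie in $\mathrm{im}\,\psi$ except the deleted loops $\beta_v$; so it suffices to recover each $\beta_v$ inside $\mathrm{im}\,\psi$. For a truncated vertex $v\in V_1$ the loop $\beta_v=\alpha_{h_v}$ satisfies $C_{\beta_v}=\beta_v$ and $\m(C_{\beta_v})=\m(v)=1$, so the second type of relation at the edge $\bar{h}_v$ reads $C_{\alpha_v}^{\m(C_{\alpha_v})}=\beta_v$, where $\alpha_v$ is the arrow ending at $\bar{h}_v$ from the other endpoint $w_v=s(\iota(h_v))$ — this is exactly the substitution of Remark~\ref{remark:truncated}(1). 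Here the bipartite hypothesis enters: the edge $\bar{h}_v$ joins $v\in V_1$ to $w_v\in V_2$, so $w_v$ lies in $V_2$ and its loop (if any) is retained in $Q'_\Gamma$. Consequently every arrow occurring in $C_{\alpha_v}$ is an arrow around $w_v$ and none of them is a deleted loop, whence $C_{\alpha_v}^{\m(C_{\alpha_v})}\in\kk Q'_\Gamma$ and $\pi(\beta_v)=\pi\!\left(C_{\alpha_v}^{\m(C_{\alpha_v})}\right)\in\mathrm{im}\,\psi$. This exhibits every generator in the image and shows $\psi$ is onto.

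The only place demanding a little care — and the step I expect to require the small amount of bookkeeping — is precisely this verification that the substituted path $C_{\alpha_v}^{\m(C_{\alpha_v})}$ never re-introduces a deleted loop, which as noted is guaranteed by bipartiteness. As a cross-check one can make the inverse explicit: define $\phi\colon\kk Q_\Gamma\to\kk Q'_\Gamma/I'_\Gamma$ on arrows by $\phi(a)=\bar a$ for $a\in Q'_\Gamma$ and $\phi(\beta_v)=\overline{C_{\alpha_v}^{\m(C_{\alpha_v})}}$, check that it annihilates the generators of $I_\Gamma$ — the type II relation at $\bar{h}_v$ becomes a tautology, while each type I relation involving $\beta_v$ becomes a relation of the form $C_{\alpha_v}^{\m(C_{\alpha_v})}\gamma=0$ or $\gamma' C_{\alpha_v}^{\m(C_{\alpha_v})}=0$ lying in $I'_\Gamma$ as a defining relation of type III — and confirm that the induced map is a two-sided inverse of $\psi$, giving an alternative, fully constructive proof of the isomorphism.
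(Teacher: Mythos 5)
Your proof is correct and follows essentially the same route as the paper, which offers no separate argument but simply invokes Remark~\ref{remark:truncated}: your map induced by the inclusion $\kk Q'_\Gamma\subseteq\kk Q_\Gamma$, with injectivity from $I'_\Gamma=I_\Gamma\cap\kk Q'_\Gamma$ and surjectivity from the substitution $\beta_v=C_{\alpha_v}^{\m(C_{\alpha_v})}$ at truncated vertices of $V_1$ (with bipartiteness ensuring the substituted cycle avoids deleted loops), is exactly the careful spelling-out of that remark. The only cosmetic point is in your optional cross-check: the elements $C_{\alpha_v}^{\m(C_{\alpha_v})}\gamma$ lie in $I'_\Gamma$ simply because they belong to $I_\Gamma\cap\kk Q'_\Gamma$, rather than literally being ``defining relations of type III'' (which concern $I_{\mathrm{adm}}$), but this does not affect the argument.
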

	
	%\begin{proof}
	%This is obvious since each loop $\alpha_h$ around a truncated vertex $v\in V_1$  can be eliminated by the relation $\alpha_h-C_{\alpha_{\iota(h)}}^{\m(s(\iota(h)))}$ in $I_\Gamma$.
	%\end{proof}
	
	%Then for each half-edge $h\in H(\Gamma)$, $s(h)$ is either in $V_1$ or in $V_2$. In order to define the reduction system more easily, consider the quiver $Q'_\Gamma$  by deleting loops around $v\in V_1$ such that $\m(v)\mathrm{val}(v)=1$ from $Q_\Gamma$.
	
	Let us construct a reduction system for $I_\Gamma'$. For this, consider  $R_\Gamma$ which consists of following elements $(s, \varphi_s)$ of three types:
	
	\begin{enumerate}[(a)]
		\item $(C_{\alpha}^{\m(C_{\alpha})}, C_{\beta}^{\m(C_{\beta})})$, for each edge $\bar h$ connecting a vertex $v\in V_1$ to  $w\in V_2$ where $C_{\alpha}$ (resp.\ $C_\beta$) is the cycle starting from $\bar h$ around $v$ (resp.\ $w$);
		
		\item $(C_{\alpha_h}^{\m(s(h))}\alpha_h, 0)$, for each half-edge $h$ such that $s(h)\in V_2$;
		
		\item $(\alpha\beta,0)$, where $\alpha,\beta$ are composable arrows in $Q_\Gamma'$ and $\sigma(\alpha)\neq \beta$.
	\end{enumerate}
	%	Denote the two-sided ideal generated by the set $\{s-\varphi_s\}_{(s,\varphi_s)\in R_\Gamma}$ by $I'_\Gamma$.

	\begin{remark}
		Note that the element $(s, \varphi_s)$ of type (a) or (c), which corresponds to the relation $s - \varphi_s = 0$ in $B_\Gamma$, is precisely induced by the two types of generating relations in $I_\Gamma'$, following the definition of a Brauer graph algebra in Subsection \ref{subsection:def-of-BGA}.
		
		For any Brauer graph algebra (not necessarily bipartite) we have the redundant relation $$C_{\alpha_h}^{\m(s(h))}\alpha_h=0$$ for each half-edge $h$.  To obtain a reduction system satisfying the diamond condition, it is necessary to include some of these redundant relations into $R_\Gamma$. However, for general Brauer graph algebras, there appears to be no systematic method for selecting these additional relations to ensure a reduction system satisfying the diamond condition.
	\end{remark}
	\begin{proposition}\label{prop:reductionsystem}
		The reduction system $R_\Gamma$  for $\kk Q'_\Gamma$ satisfies the diamond condition ($\diamond$) for $I'_\Gamma$.
	\end{proposition}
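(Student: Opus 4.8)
The plan is to verify the two parts of the diamond condition ($\diamond$) from Definition \ref{def:red-sys} in turn. First I would check condition (i), namely that $I'_\Gamma$ is generated by the set $\{s-\varphi_s\}_{(s,\varphi_s)\in R_\Gamma}$. By Lemma \ref{iso} and the construction of $Q'_\Gamma$, the ideal $I'_\Gamma$ is generated by the two types of defining relations of the Brauer graph algebra. The type (a) elements $(C_\alpha^{\m(C_\alpha)}, C_\beta^{\m(C_\beta)})$ are exactly the commutativity relations of type II, and the type (c) elements $(\alpha\beta,0)$ are exactly the monomial relations of type I, so these already generate $I'_\Gamma$. The type (b) elements $(C_{\alpha_h}^{\m(s(h))}\alpha_h, 0)$ for $s(h)\in V_2$ are the redundant relations of type III; as noted in Remark \ref{remark:truncated}, each such relation $C_{\alpha_h}^{\m(s(h))}\alpha_h=0$ already lies in $I'_\Gamma$, so adjoining them does not enlarge the ideal. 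Hence condition (i) holds.

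The substantive part is condition (ii): every path must be reduction-unique, which by Bergman's Diamond Lemma reduces to checking that all \emph{overlap ambiguities} are resolvable. I would enumerate the ways two tips in $S = \{C_\alpha^{\m(C_\alpha)}\} \cup \{C_{\alpha_h}^{\m(s(h))}\alpha_h\} \cup \{\alpha\beta : \sigma(\alpha)\ne\beta\}$ can overlap as subpaths of a common path, and show each resolves. The overlaps to treat include: (1) a type (c) tip $\alpha\beta$ overlapping another type (c) tip; (2) a type (c) tip overlapping the beginning or end of a type (a) or type (b) cyclic tip; and (3) the more delicate self-overlaps and mutual overlaps among the cyclic tips of types (a) and (b). The key structural facts I would exploit are that $B_\Gamma$ is special biserial, so at each vertex of $Q'_\Gamma$ there are at most two incoming and two outgoing arrows, which sharply restricts how cyclic words can overlap; and that bipartiteness forces a strict alternation of the two cycles $C_\alpha$ and $C_\beta$ meeting at each edge, so that, for instance, the type (a) relation at an edge interacts only with the two type (b) relations sitting at its $V_2$-endpoint. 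For each ambiguity I would reduce the overlap word by the two competing tips and verify the resulting irreducible expressions agree, using the type (a) relation $C_\alpha^{\m(C_\alpha)} = C_\beta^{\m(C_\beta)}$ to rewrite cyclic powers and the type (b) and (c) relations to kill any further extension.

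The main obstacle I anticipate is the analysis of overlaps involving the cyclic tips, particularly confirming that extending a maximal cyclic word $C_\alpha^{\m(C_\alpha)}$ by one more arrow always produces zero in $B_\Gamma$ and that this is forced by the type (b) relations at the $V_2$-vertices — this is precisely why the redundant type (b) relations had to be included only for $s(h)\in V_2$, and checking that this asymmetric choice suffices (rather than including type (b) relations at every vertex) is the crux. Concretely, I expect the hard case to be an overlap of the form where a type (a) tip $C_\alpha^{\m(C_\alpha)}$ and a type (b) tip $C_{\alpha_h}^{\m(s(h))}\alpha_h$ share a long common cyclic subword; here one must carefully use the starting-arrow conventions for $C_{\alpha_h}$ (recalled after Figure \ref{fig1}) and the equality of cyclic powers across an edge to see that both reductions send the overlap to zero. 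Once all ambiguities are shown resolvable, reduction-uniqueness follows from Bergman's Diamond Lemma, completing the verification of ($\diamond$).
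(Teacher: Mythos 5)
Your overall plan (verify ideal generation, then check condition (ii) directly via Bergman's Diamond Lemma by resolving overlap ambiguities) is a genuinely different route from the paper, which instead constructs a weight-lexicographic admissible order making every cycle around a $V_1$-vertex larger than the cycles around $V_2$-vertices, shows $\{s-\varphi_s\}$ is thereby a tip-reduced (reduced) Gr\"obner basis of $I'_\Gamma$ (citing the verification in \cite[Proposition 4.3]{LX}), and then invokes the general fact that a reduced Gr\"obner basis induces a reduction system satisfying ($\diamond$) (\cite[Lemma 2.10]{CS}, \cite[Section 3.1.1]{BW}). However, as written your plan has a genuine gap: Bergman's Diamond Lemma does \emph{not} say that resolvability of overlap ambiguities alone yields reduction-uniqueness; it requires in addition a semigroup partial order compatible with the reduction system and satisfying the descending chain condition (equivalently, one must establish reduction-finiteness). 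This is not a formality here, because the type (a) substitution $C_\alpha^{\m(v)}\to C_\beta^{\m(w)}$ can strictly increase path length, so termination of reductions is not automatic and cannot be waved through. Exhibiting such an order is precisely the crux of the paper's argument and is exactly where bipartiteness enters: one weights the arrows around $V_1$-vertices much more heavily than those around $V_2$-vertices, which globally and coherently forces the $V_1$-side power to be the tip of every commutativity relation and makes every reduction strictly decreasing. Your proposal never addresses this, while attributing the role of bipartiteness only to restricting the combinatorics of overlaps.

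A secondary inaccuracy: the case you single out as hardest, an overlap between a type (a) tip $C_\alpha^{\m(C_\alpha)}$ and a type (b) tip $C_{\alpha_h}^{\m(s(h))}\alpha_h$ along ``a long common cyclic subword,'' cannot occur. Arrows around $V_1$-vertices and arrows around $V_2$-vertices are disjoint subsets of $Q_1'$, so type (a) and type (b) tips share no arrows and hence admit no overlap ambiguity with each other. The actual ambiguities one must resolve are the overlaps of cyclic tips with the quadratic type (c) tips, and the self-overlaps among cycles based at a common vertex (e.g.\ $\alpha\alpha^*\alpha=C_\alpha^{\m}\alpha=\alpha C_{\sigma(\alpha)}^{\m}$ and $\beta'C_\beta^{n}\beta$), which are exactly the $1$-ambiguities enumerated in Appendix \ref{appendix:2-cocycle}. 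If you supply the compatible weight order (or otherwise prove reduction-finiteness) and redirect the case analysis to these ambiguities, your direct verification would go through, but it would then essentially reconstruct by hand what the paper obtains from the Gr\"obner-basis result of \cite{LX} together with \cite[Lemma 2.10]{CS}.
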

	\begin{proof}
		It is clear that all the $s$ are contained in $Q_{\geq 2}$ and $I_\Gamma'$ is generated by  the set $\{s-\varphi_s\}_{(s,\varphi_s)\in R_\Gamma}$. We first show that the set $\{s-\varphi_s\}_{(s,\varphi_s)\in R_\Gamma}$ is a reduced Gr\"obner basis of $I_\Gamma'$. Using the same method as in \cite[Example 2.2]{Green}, we can give a weight-lexicographic order to make each cycle around the vertex in $V_1$ bigger than cycles around the vertices in $V_2$. For example, consider the commutative relation $C_\alpha^m - C_\beta^n$ in a bipartite Brauer graph algebra $B_\Gamma$. Assume that $C_\alpha$ is a cycle around a vertex in $V_1$ and $C_\beta$ is a cycle around a vertex in $V_2$. By using a weight-lexicographic order, for instance by assigning weights to the arrows in $C_\alpha$ that are significantly larger than those assigned to the arrows in $C_\beta$, we can ensure that the cycle $C_\alpha^m$ around $V_1$ always appears as the tip in the commutative relation $C_\alpha^m - C_\beta^n$. The choice of such an order results in $C_\beta^n$ being an element of the $k$-basis of $B_\Gamma$, and leads to the elements of the form $C_\beta^n \beta(=\alpha^*\cdot\alpha\beta-(C_\alpha^m - C_\beta^n)\cdot\beta)$ lying in this expected Gr\"obner basis.

Then $\{s-\varphi_s\}_{(s,\varphi_s)\in R_\Gamma}$ becomes tip-reduced. Therefore, same as the verification in \cite[Proposition 4.3]{LX}, $\{s-\varphi_s\}_{(s,\varphi_s)\in R_\Gamma}$ is actually a reduced Gr\"{o}bner basis of $I'_\Gamma$. In particular, $R_\Gamma$ is a reduction system for $\kk Q'_\Gamma$ and satisfies the condition ($\diamond$) for $I'_\Gamma$, see  \cite[Lemma 2.10]{CS} or \cite[Section 3.1.1]{BW}.
	\end{proof}
	
	\begin{example}
		Consider the Brauer graph $(\Gamma,\m)$ where $\Gamma$ is given by:
		\begin{center}
			
			\tikzset{every picture/.style={line width=0.75pt}} %set default line width to 0.75pt        
			
			% [inline block 3: 1 envs, 5157 chars -> data_tex | \begin{tikzpicture}[x=0.75pt,y=0.75pt,yscale=-1,xscale=1] 				%uncomment if require: \path (0,235); %set diagram left st...]

		\end{center}
		and the multiplicity function $\m$ is defined by $\m(v_1)=\m(w)=1$, $\m(v_2)=2$. Then the associated quiver $Q_\Gamma$ is given by
		$$Q_\Gamma= \begin{tikzcd}
			1 \arrow["\alpha"', loop, distance=2em, in=215, out=145] \arrow[r, "\delta", shift left] & 2 \arrow["\beta"', loop, distance=2em, in=35, out=325] \arrow[l, "\gamma", shift left]
		\end{tikzcd}$$
		and $$I_\Gamma=\langle \alpha-\gamma\delta,\;\beta^2-\delta\gamma,\;\beta^3,\;\alpha^2,\;\alpha\gamma,\;\delta\alpha,\;\beta\delta,\;\gamma\beta\rangle.$$
		
		Note that $\Gamma$ is bipartite by choosing $V_1=\{w\}$ and $V_2=\{v_1,v_2\}$. Since there are no truncated vertices in $V_1$, we obtain that the corresponding quiver $Q_\Gamma'=Q_\Gamma$ and $I_\Gamma'=I_\Gamma$. In this case, the reduction system $R_\Gamma$ is given by
		$$\{(\gamma\delta,\alpha),\;(\delta\gamma,\beta^2),\;(\beta^3,0),\;(\alpha^2,0),\;(\alpha\gamma,0),\;(\delta\alpha,0),\;(\beta\delta,0),\;(\gamma\beta,0)\}.$$
		
		We may also choose $V_1=\{v_1,v_2\}$ and $V_2=\{w\}$. In this case, the vertex $v_1$ in $V_1$ is truncated so the loop $\alpha$ is deleted and the corresponding quiver $Q_\Gamma'$ is given by 
		$$Q_{\Gamma}' = \begin{tikzcd}
			1 \arrow[r, "\delta", shift left] & 2 \arrow["\beta"', loop, distance=2em, in=35, out=325] \arrow[l, "\gamma", shift left]
		\end{tikzcd}$$
		and the ideal $I_\Gamma'$ is given by  $$I_\Gamma'=\langle \beta^2-\delta\gamma,\;\gamma\delta\gamma,\;\delta\gamma\delta,\;\beta\delta,\;\gamma\beta\rangle.$$
		Thus the reduction system $R_\Gamma$ is given by 
		$$\{(\beta^2,\delta\gamma),\;(\gamma\delta\gamma,0),\;(\delta\gamma\delta,0),\;(\beta\delta,0),\;(\gamma\beta,0)\}.$$
	\end{example}

	At the end of this subsection, we would like to mention that, as an application of the Gr\"{o}bner basis constructed in \cite{LX},  we can calculate the dimension of any Brauer graph algebra (not necessarily bipartite). This result may be known to experts. Indeed, \cite[Proposition 3.13]{GS} provides a dimension formula for any Brauer configuration algebra, which is a generalization of Brauer graph algebra, using an approach that does not employ Gr\"{o}bner basis.
	
	\begin{proposition}\label{dim}
		Let $B_\Gamma$ be a Brauer graph algebra with associated Brauer graph $(\Gamma,\m)$. Then $$\dim_\kk(B_\Gamma)=\sum_{v\in V(\Gamma)}\m(v)\mathrm{val}(v)^2.$$
	\end{proposition}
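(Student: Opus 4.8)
The plan is to exhibit an explicit $\kk$-basis of $B_\Gamma$ consisting of paths and to enumerate it. By \cite[Proposition 4.3]{LX}, the two types of generating relations of $I_\Gamma$ together with the redundant relations $C_{\alpha_h}^{\m(s(h))}\alpha_h$ form a Gr\"obner basis of the admissible ideal; equivalently (cf.\ \cite[Theorem 1.2]{Ber} and the discussion after Definition \ref{def:red-sys}) they yield a reduction system satisfying the diamond condition. Hence the set $\irr_S$ of irreducible paths is a $\kk$-basis of $B_\Gamma$, and the dimension equals the number of nonzero paths. Since every nonzero path $p$ has a well-defined origin vertex $o(p)$ of the quiver, I would organize the count by origin: writing $n_{\bar h}$ for the number of nonzero paths $p$ with $o(p)=\bar h$ (this is the dimension of the indecomposable projective attached to the edge $\bar h$), we have $\dim_\kk B_\Gamma=\sum_{\bar h\in E(\Gamma)}n_{\bar h}$, so it suffices to compute each $n_{\bar h}$.

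Next, fix a quiver vertex $\bar h$, corresponding to an edge of $\Gamma$ with endpoints $v$ and $w$. From the description in Subsection \ref{subsection:def-of-BGA}, exactly two arrows start at $\bar h$, one winding around $v$ and one around $w$; by the monomial relations of type (I) any nonzero path starting at $\bar h$ is forced to follow the $\sigma$-orbit, hence is an initial subpath of one of the two maximal cyclic paths $C^{\m(v)}$ (around $v$) or $C^{\m(w)}$ (around $w$). Since the cycle around $v$ has length $\mathrm{val}(v)$, the positive-length prefixes of $C^{\m(v)}$ have lengths $1,2,\dots,\m(v)\mathrm{val}(v)$, and similarly for $w$, while the trivial path $e_{\bar h}$ is common to both arms. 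The single identification to keep track of is the commutativity relation of type (II): the two maximal paths $C^{\m(v)}$ and $C^{\m(w)}$ represent the same socle element of $B_\Gamma$ and so must be counted once. This gives
\[
n_{\bar h}=1+\m(v)\mathrm{val}(v)+\m(w)\mathrm{val}(w)-1=\m(v)\mathrm{val}(v)+\m(w)\mathrm{val}(w).
\]

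Finally I would sum over all edges. Each vertex $v$ is incident to exactly $\mathrm{val}(v)$ edges, a loop being counted with multiplicity two, so the summand $\m(v)\mathrm{val}(v)$ contributes to $n_{\bar h}$ for exactly $\mathrm{val}(v)$ edges $\bar h$; collecting terms yields $\sum_{v\in V(\Gamma)}\m(v)\mathrm{val}(v)^2$, as claimed. The point requiring genuine care is the type (II) relation: it is \emph{not} length-homogeneous, so the two representatives $C^{\m(v)}$ and $C^{\m(w)}$ of the socle element generally have different lengths yet must be identified, and this is exactly the $-1$ per edge above (in Gr\"obner-basis terms, one of the two maximal paths is a tip and the other is irreducible). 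The only other bookkeeping is the truncated vertices, where the corresponding loop is deleted as in Remark \ref{remark:truncated}; there $\m(v)\mathrm{val}(v)=1$ and the deleted loop is replaced by the socle coming from the opposite arm, so a short separate check confirms the same value of $n_{\bar h}$. I expect this socle identification to be the sole subtlety; the remainder is a direct enumeration of irreducible paths, which one can cross-check on the worked example above (where $\m(v_1)\mathrm{val}(v_1)^2+\m(v_2)\mathrm{val}(v_2)^2+\m(w)\mathrm{val}(w)^2=1+2+4=7$, matching the $7$ irreducible paths listed).
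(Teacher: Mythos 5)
Your proof is correct and takes essentially the same route as the paper: both arguments enumerate the irreducible-path basis supplied by the Gr\"obner basis of \cite[Proposition 4.3]{LX}, identify the two maximal cyclic paths at each quiver vertex via the type (II) relation, and finish with the incidence identity between valencies and edges ($\sum_{v}\mathrm{val}(v)=2|E|$). The only difference is bookkeeping: you group the irreducible paths by their origin, computing the dimension $\m(v)\mathrm{val}(v)+\m(w)\mathrm{val}(w)$ of the indecomposable projective at each edge $\bar h$ and summing over edges, whereas the paper groups them as the trivial paths, the proper subpaths of the cycles $C_\alpha^{\m(C_\alpha)}$, and one maximal path per edge—the same count organized differently.
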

	\begin{proof}
		We adopt the notations from \cite{LX}. By \cite[Proposition 4.3]{LX}, there is a Gr\"{o}bner basis $R_1\cup R_2\cup R_3$ with respect to some admissible well-order on $(Q_\Gamma)_{\geq 0}$ for $B_\Gamma$, where $R_1, R_2$, and $R_3$ consists of the defining relations of type I, II, and III respectively as in \cite[Subsection 4.1]{LX} (see Remark~\ref{remark:truncated} for a short review). It is known that the set of irreducible paths forms a basis of $B_\Gamma$. By considering the relations appeared in $R_3$, it is enough to consider the subpaths of any cycle of the form $C_\alpha^{\m(C_\alpha)}$ when we count the number of irreducible paths.  Note that every proper subpath of the cycle $C_\alpha^{\m(C_\alpha)}$ does not contain any tips and thus is irreducible. There are the number 
		\[
		|E(\Gamma)| + \sum_{v\in V}\mathrm{val}(v)(\m(v)\mathrm{val}(v)-1)
		\]
		of such irreducible paths. 
		For each edge $\bar h$ in $\Gamma$ either $C_{\alpha_h}^{\m(s(h))}$ or $ C_{\alpha_{\iota(h)}}^{\m(s(\iota(h)))}$ is irreducible. The number of such irreducible paths is equal to the number $|E(\Gamma)|$ of edges. As a result, we have the following 			
		\begin{align*}
			\dim_\kk(B_\Gamma) & =|E(\Gamma)|+\sum_{v\in V}\mathrm{val}(v)(\m(v)\mathrm{val}(v)-1)+|E(\Gamma)|\\
			& =\sum_{v\in V}\m(v)\mathrm{val}(v)^2-\sum_{v\in V}\mathrm{val}(v)+2|E(\Gamma)|\\
			&=\sum_{v\in V}\m(v)\mathrm{val}(v)^2-2|E(\Gamma)|+2|E(\Gamma)|\\
			&=\sum_{v\in V}\m(v)\mathrm{val}(v)^2,
		\end{align*}
		where the third equality follows since $\sum_{v\in V}\mathrm{val}(v) = 2 |E(\Gamma)|$.		
	\end{proof}
	
	\begin{remark}
		In Section \ref{subsection:typeA}, we show that each bipartite Brauer graph algebra $B_\Gamma$ admits a natural deformation $B_\Gamma^{(A)}$ which is isomorphic to the matrix algebra $\prod_{v\in V(\Gamma)}\mathrm{M}_{\mathrm{val}(v)}(\kk)^{\m(v)}$. Clearly the dimension of the latter is also given by $\sum_{v\in V(\Gamma)}\m(v)\mathrm{val}(v)^2$. 
	\end{remark}

	\subsection{General forms of $2$-cochains}\label{sec:differential}

	As is well-known, Hochschild cohomology groups are important invariants of associative algebras, preserved under derived equivalences. Their computation relies on the construction of a  projective bimodule resolution of the given algebra. Here we review a method for computing Hochschild cohomology of a quiver algebra via the projective resolution defined in \cite{CS}. However, for the purpose to relating our computation with the deformation theory, we will adopt the notations from \cite{BW} for this resolution.
	
	For each quiver algebra $A=\kk Q/I$, if we fix a reduction system $R$ satisfying the condition ($\diamond$) for $I$, then there is a two-sided projective resolution $P_\bullet$ of $A$ which is given by Chouhy and Solotar in \cite[Section 4]{CS} (see also \cite[Section 4]{BW}). Applying $\ho_{A^e}(-, A)$ to $P_\bullet,$ we obtain a cochain complex
	$$0\rightarrow \ho_{\kk Q_0^e}(\kk Q_0, A)\stackrel{\partial^0}{\rightarrow} \ho_{\kk Q_0^e}(\kk Q_1, A)\stackrel{\partial^1}{\rightarrow} $$ 
$$\ho_{\kk Q_0^e}(\kk S_2, A)\stackrel{\partial^2}{\rightarrow} \ho_{\kk Q_0^e}(\kk S_3, A)\rightarrow\cdots$$
	computing the Hochschild cohomology $\hh^\bullet(A).$ 
	Here,  $S_2:=S$ is the set of tips of $R$ and 
	$$S_3:=\{\text{(left) $1$-ambiguities with respect to $S$} \}.$$
	A path $uvw$ is called a {\it (left) $1$-ambiguity} with respect to $S$ if
	\begin{itemize}
	\item $u\in Q_1$ and $v,w$ are irreducible paths in $\mathrm{Irr}_S$;
	\item $uv$ (resp. $vw$) is reducible but $ud$ (resp. $vd'$) is irreducible for any proper (left) divisor $d$ (resp. $d'$) of $v$ (resp. $w$).
	\end{itemize}

So $\hh^2(A)$ can be represented by the non-trivial cocycles in $\ho_{\kk Q_0^e}(\kk S, A)$. In this paper, we often denote $\psi$ as an element of $\ho_{\kk Q_0^e}(\kk Q_1, A)$, and $\tilde{\varphi}$ (here we use $\tilde{\varphi}$ in order to avoid the confusion with the symbol $\varphi$ appeared in the reduction system) as an element of $\ho_{\kk Q_0^e}(\kk S, A)$. Note that the set $\irr_S$ of all irreducible paths forms a basis of $A$ through the natural projection $\pi: \kk Q \twoheadrightarrow \kk Q/I$.
	
	To be more explicit, the differentials in low degrees are explicitly given by:
	\begin{itemize}
		\item For each $\phi\in \mathrm{Hom}_{\kk Q_0^e}(\kk Q_0,{A})$ and each arrow $\alpha\in Q_1$, the differential $\partial^0$ is defined by: $$\partial^0(\phi)(\alpha)=\pi(\alpha\cdot \phi(s(\alpha)))-\pi(\phi(t(\alpha))\cdot\alpha);$$
		\item For each $s \in S$, let $\mathrm{Supp}(s - \varphi_s)$ denote the set of paths appearing in  $s - \varphi_s$, and for each path $p \in \mathrm{Supp}(s - \varphi_s)$, let $c(p) \in \kk$ be its coefficient in $s - \varphi_s$. That is, $$s-\varphi_s = \sum_{p\in \mathrm{Supp}(s - \varphi_s)} c(p) p.$$  For each $\psi \in \ho_{\kk Q_0^e}(\kk Q_1, A)$ and $s \in S$, the differential $\partial^1$ is defined by: $$\partial^1(\psi)(s)=\sum_{\alpha_{m}\cdots\alpha_{1}\in \mathrm{Supp}(s-\varphi_s)}\sum_{i=1}^{m}c(\alpha_{m}\cdots\alpha_{1})\cdot\pi(\alpha_{m}\cdots\alpha_{i+1}\psi(\alpha_{i})\alpha_{i-1}\cdots\alpha_{1});$$
		(For the above two differentials, see also in \cite[Proposition 3.7]{LX}.)
        \item Let $\partial_3: A \otimes_{\kk Q_0} \kk S_3 \otimes_{\kk Q_0} A \to A \otimes_{\kk Q_0} \kk S \otimes_{\kk Q_0} A$ be the differential in the two-sided projective resolution of $A$ from \cite{CS} (given implicitly in \cite[Theorems 4.1 and 4.2]{CS} and explicitly via homotopy deformation retract in \cite[Lemma 5.15]{BW}). For each $$\tilde\varphi \in \ho_{\kk Q_0^e}(\kk S, A) \cong \ho_{A^e}(A \otimes_{\kk Q_0} \kk S \otimes_{\kk Q_0} A, A)$$ and each $uvw \in S_3$, the differential $\partial^2$ is defined by:
$$\partial^2(\tilde\varphi)(uvw)=\tilde\varphi(\partial_3(1\otimes uvw\otimes 1)).$$
In fact, a combinatorial description of the kernel of $\partial^2$ is given in \cite[Section~7.A]{BW}, which is the main tool for us to compute the $2$-cocycles in $\hh^2(B_\Gamma)$, see Appendix \ref{appendix:2-cocycle} and the proof of Theorem \ref{thm:infinitesimaldeformation}. 
	\end{itemize}

	{\it In this section, we will use the reduction system $R_\Gamma$ constructed in Section \ref{subsection:reductionsystem} to give a collection of explicit cocycles which form a basis of $\hh^2(B_\Gamma)$ for any bipartite Brauer graph algebra $B_\Gamma.$}
	
	Let $\Gamma=\Gamma[V_1,V_2]$ be a bipartite Brauer graph and $B_\Gamma=\kk Q'_\Gamma/I'_\Gamma$ be the corresponding Brauer graph algebra as in Subsection \ref{subsection:reductionsystem}. For this moment, we assume that  $\Gamma$ is not the Brauer graph with two vertices with the multiplicities $m$ and $1$ and a single edge, so that the corresponding Brauer graph algebra is not isomorphic to the local Brauer tree algebra $\Bbbk[x]/\langle x^{m+1}\rangle $.  See Example \ref{exam:localbrauer} for the discussion on this special case. 
	
	For simplicity, let us denote $Q'=Q_\Gamma' $ and $I'=I_\Gamma'$.  First of all, we give an explicit description about the space $\ho_{\kk (Q')_0^e}(\kk S, B_\Gamma)$ with $S=\{s\;|\; (s,\varphi_s)\in R_\Gamma\}$. Recall that the set $\irr_S$ of all irreducible paths forms a basis of $B_\Gamma$, and according to the discussion in the proof of Proposition \ref{prop:reductionsystem}, each element of $\irr_S$ must be a subpath of some cycle of the form $C_\alpha^{\m(C_\alpha)}$.
	
Let $\tphi\in\ho_{\kk (Q')_0^e}(\kk S, B_\Gamma)$. The general form of $\tphi$ is described as follows, which depends on the elements of three types  $(a), (b), (c)$ in $R_\Gamma$, see the paragraph right below Lemma  \ref{iso}. Note that in the following when we view a path $p$ in the quiver $Q$ as an element of $A$, we always mean that it represents the image of $p$ in $\kk \irr_S$ through the canonical projection $\kk Q\rightarrow A$.
	
	\subsubsection{Let $(s,\varphi_s)$ be an element of type (a) in $R_\Gamma$}\label{sec:cochain(a)}
	%we have the following description. 
	\begin{enumerate}[(1)]
		\item 	Let $v\in V_1$ be not truncated (i.e.\ $\m(v)\neq 1$ and $\mathrm{val}(v)\neq 1$). For any $w\in V_2$  connected with $v$ through an edge $\bar h$. Denote  $\m(v)=m$ and  $\m(w)=n$. Let $e$ be the idempotent corresponding to $\bar h $. In this case, we have $(C_\alpha^m,C_\beta^n) \in R_\Gamma$, that is $s=C_\alpha^m$, $\varphi_s=C_\beta^n$.
		\begin{center}
			
			\tikzset{every picture/.style={line width=0.75pt}} %set default line width to 0.75pt        
			
			% [inline block 4: 1 envs, 4369 chars -> data_tex | \begin{tikzpicture}[x=0.75pt,y=0.75pt,yscale=-1,xscale=1] 				%uncomment if require: \path (0,235); %set diagram left st...]

		\end{center}
		Then the general form of $\tphi_s:=\tphi(s)$ is given as $$\tphi_s=\lambda^{(s)}e+\sum_{i=1}^{m-1}\mu_i^{(s)}C_\alpha^i+\sum_{j=1}^{n-1}\varepsilon_j^{(s)}C_\beta^j+\kappa^{(s)}C_\beta^n,$$
		where $\lambda^{(s)}, \mu_i^{(s)}, \varepsilon_j^{(s)}, \kappa^{(s)} \in \Bbbk.$ In other words, the irreducible paths parallel to $s=C_\alpha^m$ are exactly  $e,C_\alpha^i$ and $ C_\beta^j$ for $1\leq i \leq m-1$ and $1\leq j \leq n$.
	\end{enumerate}

	\subsubsection{Let $(s,\varphi_s)$ be an element of type (b) in $R_\Gamma$}
	%we have the following cases. 
	\begin{enumerate}[(1)]
		\setcounter{enumi}{1}
		\item Let $w\in V_2$.  Let $h_1\neq h_2$ be any two half-edges such that $s(h_1)= s(h_2) = w$ and $s(\iota(h_1))\neq s(\iota(h_2))$.  Denote $\m(w)=n$. In this case, we have $(C_\beta^n \beta, 0) \in R_\Gamma$, namely  $s=C_\beta^n \beta$, $\varphi_s=0$.
		
		\begin{center}

			\tikzset{every picture/.style={line width=0.75pt}} %set default line width to 0.75pt        
			
			% [inline block 5: 1 envs, 2968 chars -> data_tex | \begin{tikzpicture}[x=0.75pt,y=0.75pt,yscale=-1,xscale=1] 				%uncomment if require: \path (0,235); %set diagram left st...]
	
		\end{center}
		Then the general form of $\tphi_s$ is given as 
		$$\tphi_s=\lambda^{(s)}\beta+\sum_{i=1}^{n-1}\varepsilon_i^{(s)}C_\beta^i \beta,$$
		where $\lambda^{(s)}, \varepsilon_i^{(s)} \in \Bbbk.$
		
		\item Let $v\in V_1$ and $w\in V_2$. Let $h_1\neq h_2$ be any two half-edges such that $s(h_1)= s(h_2) = w$ and $s(\iota(h_1))=s(\iota(h_2))=v$. Denote $\m(v)=m$ and $\m(w)=n$. In this case, we have  $(C_\beta^n \beta,0)\in R_\Gamma$, namely $s=C_\beta^n \beta$, $\varphi_s=0$.
		\begin{center}
			\tikzset{every picture/.style={line width=0.75pt}} %set default line width to 0.75pt        
			
			% [inline block 6: 1 envs, 6340 chars -> data_tex | \begin{tikzpicture}[x=0.75pt,y=0.75pt,yscale=-1,xscale=1] 				%uncomment if require: \path (0,235); %set diagram left st...]

			
		\end{center}
		Then the general form of $\tphi_s$ is given as  $$\tphi_s=\lambda^{(s)}\beta+\sum_{i=1}^{n-1}\varepsilon_i^{(s)}C_\beta^i\beta+\sum_{j=0}^{m-1}\nu_j^{(s)}C_\alpha^jp_\alpha,$$
		where $\lambda^{(s)}, \varepsilon_i^{(s)}, \nu_j^{(s)} \in \Bbbk.$
		
		\item Let $w\in V_2$ with $\mathrm{val}(w)=1$ and $v\in V_1$. Denote $\m(w)=n$ and $\m(v)=m$. Denote the cycle starting from $h$ by $C_\alpha$. In this case, we have $(\beta^{n+1},0)\in R_\Gamma$, namely $s=\beta^{n+1}$, $\varphi_s=0$.
		\begin{center}
			
			\tikzset{every picture/.style={line width=0.75pt}} %set default line width to 0.75pt        
			
			% [inline block 7: 1 envs, 3695 chars -> data_tex | \begin{tikzpicture}[x=0.75pt,y=0.75pt,yscale=-1,xscale=1] 				%uncomment if require: \path (0,235); %set diagram left st...]

		\end{center}
		Then the general form of $\tphi_s$ is given as $$\tphi_s=\lambda^{(s)}\beta+\sum_{i=1}^{n-1}\varepsilon_i^{(s)}\beta^{i+1}+\sum_{j=0}^{m-1}\nu_j^{(s)}C_\alpha^j.$$
		where $\lambda^{(s)}, \varepsilon_i^{(s)}, \nu_j^{(s)} \in \Bbbk.$
	\end{enumerate}
	
	\subsubsection{Let $(s,\varphi_s)$ be an element of type (c) in $R_\Gamma$}\label{sec:cochain(c)}
	%we have the following cases.
	\begin{enumerate}[(1)]
		\setcounter{enumi}{4}
		\item Let $(\alpha\beta, 0) \in R_\Gamma$. Suppose that $\alpha$ ends at the half-edge $h_1$ and $\beta$ starts from the half-edge $h_2$ such that $h_1\neq \iota(h_2)$. 
		
		\begin{center}
			\tikzset{every picture/.style={line width=0.75pt}} %set default line width to 0.75pt        
			
			% [inline block 8: 1 envs, 3599 chars -> data_tex | \begin{tikzpicture}[x=0.75pt,y=0.75pt,yscale=-1,xscale=1] ...]

			
		\end{center}
		Since $\Gamma$ is bipartite,  $s(\iota(h_1))\neq s(\iota(h_2))$ it follows that $$\tphi_{\alpha \beta}=0.$$
		Namely, in this case, there are no irreducible paths parallel to $\alpha \beta$.
		
		\item Let $v,w\in V$ be different vertices connected by some edges in $\Gamma$.  Let $h_1\neq h_2$ be any two half-edges such that $s(h_1)= s(h_2) = w$ and $s(\iota(h_1))= s(\iota(h_2))=v$. Denote $\m(v)=m$ and $\m(w)=n$. Let $e$ be the idempotent corresponding to $\bar{h_2}$. Denote the cycle starting from $\iota(h_2)$ by $C_{\sigma(\alpha)}$ and the cycle starting from $h_2$ by $C_\beta$. In this case, we have $(\beta\alpha,0)\in R_\Gamma$, that is $s=\beta\alpha$, $\varphi_s=0$.
		
		\begin{center}

			\tikzset{every picture/.style={line width=0.75pt}} %set default line width to 0.75pt        
			
			% [inline block 9: 1 envs, 6356 chars -> data_tex | \begin{tikzpicture}[x=0.75pt,y=0.75pt,yscale=-1,xscale=1] 				%uncomment if require: \path (0,235); %set diagram left st...]

			
		\end{center}
		Then the general form of $\tphi_s$ is given as $$\tphi_s=\lambda^{(s)}e+\sum_{i=1}^{m-1}\xi_i^{(s)}C_{\sigma(\alpha)}^i+\sum_{j=1}^{n-1}\zeta_j^{(s)}C_\beta^j+\kappa^{(s)}C_\beta^n,$$
		where $\lambda^{(s)}, \xi_i^{(s)}, \zeta_j^{(s)}, \kappa^{(s)} \in \Bbbk.$
		
		\item Let $v,w\in V$ be different vertices connected by some edges in $\Gamma$.  Let $h_1\neq h_2$ be any two half-edges such that $s(h_1)= s(h_2) = w$ and $s(\iota(h_1))= s(\iota(h_2))=v$. Denote $\m(v)=m$ and $\m(w)=n$. In order to make this case different from (6), if $p_{\alpha'}, p_{\beta}\in (Q_\Gamma')_1$, then we assume $\bar{h_1}\neq \bar{h_2}$. Denote the cycle starting from $\iota(h_2)$ by $C_{\alpha'}$ and the cycle starting from $h_2$ by $C_{\beta'}$. In this case, we have $(\beta'\alpha,0)\in R_\Gamma$, that is $s=\beta'\alpha$, $\varphi_s=0$.
		
		\begin{center}

			\tikzset{every picture/.style={line width=0.75pt}} %set default line width to 0.75pt        
			
			% [inline block 10: 1 envs, 6895 chars -> data_tex | \begin{tikzpicture}[x=0.75pt,y=0.75pt,yscale=-1,xscale=1] 				%uncomment if require: \path (0,235); %set diagram left st...]

			
		\end{center}
		Then the general form of $\tphi_s$ is given as $$\tphi_s=\sum_{i=0}^{m-1}\xi_i^{(s)}C_{\alpha'}^ip_{\alpha'}\alpha+\sum_{j=0}^{n-1}\zeta_j^{(s)}C_{\beta'}^j\beta'p_{\beta},$$
		where $\xi_i^{(s)}, \zeta_j^{(s)} \in \Bbbk.$
		
		\item Let $v,w\in V$ be different vertices connected by some edges in $\Gamma$.  Let $h_1\neq h_2$ be any two half-edges such that $s(h_1)= s(h_2) = w$ and $s(\iota(h_1))=v\neq s(\iota(h_2))$. Denote $\m(w)=n$. Let the cycle starting from $h_2$ by $C_{\beta'}$. In this case, we have $(\beta'\alpha,0)\in R_\Gamma$, that is $s=\beta'\alpha$, $\varphi_s=0$.
		\begin{center}

			\tikzset{every picture/.style={line width=0.75pt}} %set default line width to 0.75pt        
			
			% [inline block 11: 1 envs, 5381 chars -> data_tex | \begin{tikzpicture}[x=0.75pt,y=0.75pt,yscale=-1,xscale=1] 				%uncomment if require: \path (0,235); %set diagram left st...]

			
		\end{center}
		Then the general form of $\tphi_s$ is given as $$\tphi_s=\sum_{j=0}^{n-1}\zeta_j^{(s)}C_{\beta'}^j\beta'p_\beta,$$
		where $ \zeta_j^{(s)} \in \Bbbk.$
		
		\item Let $v,w\in V$ be different vertices connected by some edges in $\Gamma$.  Let $h_1\neq h_2$ be any two half-edges such that $s(h_1)= s(h_2) = v$ and $s(\iota(h_1))\neq w = s(\iota(h_2))$. Denote $\m(v)=m$. Denote the cycle starting from $h_2$ by $C_{\alpha}$. In this case, we have $(\beta\alpha',0)\in R_\Gamma$, that is $s=\beta\alpha'$, $\varphi_s=0$.
		\begin{center}

			\tikzset{every picture/.style={line width=0.75pt}} %set default line width to 0.75pt        
			
			% [inline block 12: 2 envs, 8992 chars -> data_tex | \begin{tikzpicture}[x=0.75pt,y=0.75pt,yscale=-1,xscale=1] 				%uncomment if require: \path (0,235); %set diagram left st...]

		\end{center}
		Then the general form of $\tphi_s$ is given as $$\tphi_s=\lambda^{(s)}e+\sum_{i=1}^{m-1}\xi_i^{(s)}\alpha^i+\sum_{j=1}^{n-1}\zeta_j^{(s)}\beta^{j}+\kappa^{(s)}\beta^n,$$
		where $\lambda^{(s)}, \xi_i^{(s)}, \zeta_j^{(s)}, \kappa^{(s)} \in \Bbbk.$
		
		\item Let $v,w\in V$ be different vertices connected by a single edge in $\Gamma$ with $\mathrm{val}(v)\neq 1=\mathrm{val}(w)$. Denote $\m(v)=m$. In this case, we have $(\beta\alpha,0)\in R_\Gamma$, that is $s=\beta\alpha$, $\varphi_s=0$.
		\begin{center}

			\tikzset{every picture/.style={line width=0.75pt}} %set default line width to 0.75pt        
			
			% [inline block 13: 2 envs, 8443 chars -> data_tex | \begin{tikzpicture}[x=0.75pt,y=0.75pt,yscale=-1,xscale=1] 				%uncomment if require: \path (0,235); %set diagram left st...]


		\end{center}
		Then the general form of $\tphi_s$ is given as $$\tphi_s=\sum_{j=0}^{n-1}\zeta_j^{(s)}C_\beta^{j}\beta,$$
		where $\zeta_j^{(s)} \in \Bbbk.$
	\end{enumerate}

\begin{remark}
	It is not hard to see that each term appeared in the above sums (for example, the term $C_\alpha^i$ in the sum of (1), the term $C_\beta^{j}\beta$ in the sum of (12), etc.) constructed in Subsections \ref{sec:cochain(a)}--\ref{sec:cochain(c)} contributes one dimension for the $\kk$-space $\ho_{\kk (Q')_0^e}(\kk S, B_\Gamma)$ of 2-cochains (of course, in concrete examples, only part of the terms will appear). In Appendix, we will show that the non-trivial cocycles in the second Hochschild cohomology group $\hh^2(B_\Gamma)$ only involve those terms with the coefficients $\lambda,\mu,\varepsilon,\kappa$.
	\end{remark}

	\subsection{Hochschild cohomology in degree $2$}
	
	Based on the general form of $2$-cochains above, we may obtain an explicit basis of  $\hh^2(B_\Gamma)$ of a Brauer graph algebra $B_\Gamma$ as follows.

	For convenience, we recall that a {\it spanning tree} of a graph $\Gamma$ is a subgraph whose vertex set coincides with the entire vertex set of $\Gamma$ and which is itself a tree \cite{BM}.

	\begin{proposition}\label{prop:cocycle}
		Let $B_\Gamma$ be a bipartite Brauer graph algebra with associated Brauer graph $(\Gamma,\m)$. If $B_\Gamma$ is not a local Brauer tree algebra, then $\hh^2(B_\Gamma)$ admits a $\Bbbk$-linear basis represented by the following forms of standard cocycles $\tphi\in\ho_{\kk (Q')_0^e}(\kk S, B_\Gamma)$.
		\begin{enumerate}[(A)]
			\item  Fix a $\lambda\in \kk^*$. Define a cocycle $\tphi\in\ho_{\kk (Q')_0^e}(\kk S, B_\Gamma)$ as
			\begin{align*}
				\tphi_{C_\alpha^{\m(v)}} & =\lambda e,\\
				\tphi_{C_\beta^{\m(w)}\beta} &=-\lambda \beta,
			\end{align*}
			for all edges $\bar h\in E(\Gamma)$ connecting $v\in V_1$ to $w\in V_2$.
			\begin{figure}[H]
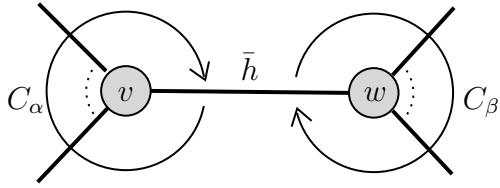

				\centering
				
				\tikzset{every picture/.style={line width=0.75pt}} %set default line width to 0.75pt        
				
				% [inline block 14: 1 envs, 4405 chars -> data_tex | \begin{tikzpicture}[x=0.75pt,y=0.75pt,yscale=-1,xscale=1] 					%uncomment if require: \path (0,235); %set diagram left s...]

				\caption{Cycles associated with the given edge $\bar h$.}
				\label{fig:(A)}
			\end{figure}
			Recall that in this case we have $$(C_\alpha^{\m(v)},  C_\beta^{\m(w)}), \ (C_\beta^{\m(w)}\beta, 0) \in R_\Gamma.$$  Here, $\lambda \in \kk$ and $e$ is the idempotent corresponding to $\bar h $. We define $\tphi_s = 0$ for all the other $s\in S.$

			\item Let $v\in V$ be a vertex in $\Gamma$ with a cycle $C_{\alpha_{h_1}}=\alpha_{h_1}\alpha_{h_2}\cdots\alpha_{h_l}$ around $v$. That is, $h_j^-=h_{j+1}$, for all $1\leq j\leq l$. We have two cases.
			\begin{itemize}
				\item Suppose $v\in V_1$ and fix $1\leq i\leq \m(v)-1$. Fix a $\mu\in \kk^*$. Then there exists a cocycle $\tphi^i\in\ho_{\kk (Q')_0^e}(\kk S, B_\Gamma)$  given by
				$$\tphi^i_{C_{\alpha_{h_j}}^{\m(v)}}=\mu C_{\alpha_{h_j}}^i,\ \text{for $1\leq j\leq l$.} $$
				We define $\tphi^i_s = 0$ for all the other $s\in S.$ In this case, there is the number $\m(v)-1$ of cocycles $\tphi^i$. 
				\item  Suppose $v\in V_2$ and fix $1\leq i\leq \m(v)-1$. Fix a $\varepsilon\in \kk^*$. Then there exists a cocycle $\tphi^i\in\ho_{\kk (Q')_0^e}(\kk S, B_\Gamma)$ given by
				\begin{align*}
					\tphi^i_{C_{\sigma(\alpha_{\iota(h_j)})}^{\m(s(\iota(h_j)))}} & =\varepsilon C_{\alpha_{h_{j-1}}}^i,\\
					\tphi^i_{C_{\alpha_{h_j}}^{\m(v)}{\alpha_{h_j}}} & =-\varepsilon C_{\alpha_{h_j}}^i \alpha_{h_j},
				\end{align*}
				for $1\leq j\leq l$.  We define $\tphi^i_s = 0$ for all the other $s\in S.$  In this case, there is the number $\m(v)-1$ of cocycles $\tphi^i$. (For arrows and cycles in this case, see for example in Figure \ref{fig:arrows-in-cycle})
			\end{itemize}
			
			\begin{figure}[ht]
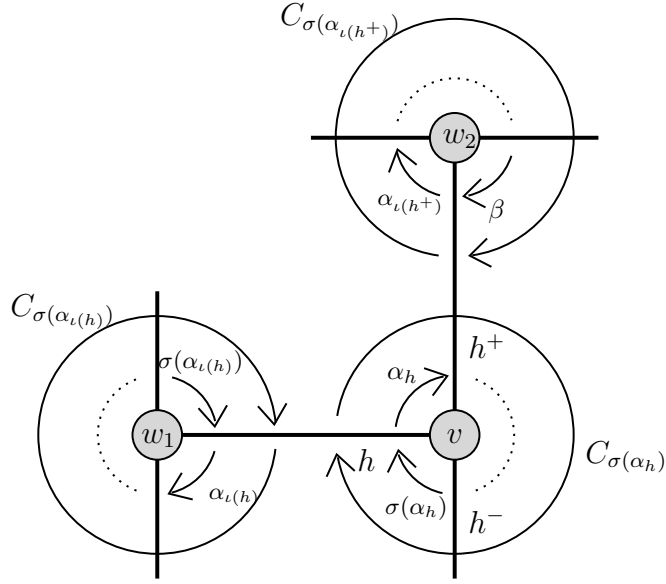

				\centering

				\tikzset{every picture/.style={line width=0.75pt}} %set default line width to 0.75pt        
				
				% [inline block 15: 1 envs, 9474 chars -> data_tex | \begin{tikzpicture}[x=0.75pt,y=0.75pt,yscale=-1,xscale=1] 					%uncomment if require: \path (0,334); %set diagram left s...]


				\caption{Arrows and cycles in bipartite Brauer graph $\Gamma$ where $w_1=s(\iota(h))$, $w_2=s(\iota(h^+))$ and $\beta=\sigma(\alpha_{\iota(h^+)})$.}
				\label{fig:arrows-in-cycle}
			\end{figure}
			
			\item Fix a spanning tree $T$ of $\Gamma$. For each edge $\bar h \in E(\Gamma)$ which is not in $T$ connecting a vertex $v\in V_1$ to  $w\in V_2$, we assume that $C_{\alpha}$ (resp.\ $C_\beta$) is the cycle starting from $\bar h$ around $v$ (resp.\ $w$). Fix a $\kappa\in \kk^*$. There exists a cocycle which is given by 
			$$\tphi_{C_{\alpha}^{\m(v)}}=\kappa C_{\beta}^{\m(w)},$$
			where $\kappa\in\kk$ and for all the other $s\in S,$ we define $\tphi^j_s =0$. (For arrows and cycles in this case, see for example in Figure \ref{fig:arrows-in-cycle}.) Note that the number of such edges $\bar h$ is $|E|-|V|+1$, see the explain at the beginning of Subsection \ref{subsec:typeC-4.2}.

			\item Let $v\in V_1$ and $w\in V_2.$ Let $h_1,h_2\in H_w$ be such that  $\iota(h_1), \iota(h_2)\in H_v$, $h_1^+=h_2$ and $\iota(h_1)^-= \iota(h_2)$. Note that here $h_1$ may be equal to $h_2$.
			
			\begin{figure}[H]
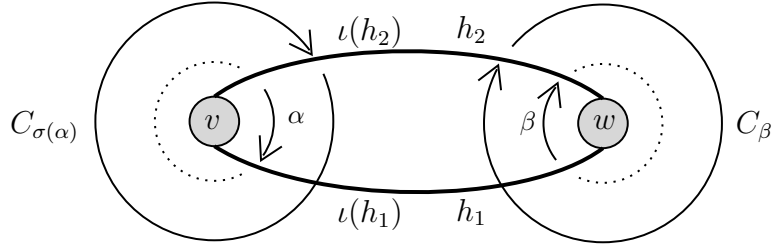

				\centering
				\tikzset{every picture/.style={line width=0.75pt}} %set default line width to 0.75pt        
				
				% [inline block 16: 1 envs, 6405 chars -> data_tex | \begin{tikzpicture}[x=0.75pt,y=0.75pt,yscale=-1,xscale=1] 					%uncomment if require: \path (0,235); %set diagram left s...]

				\caption{The subgraph of $\Gamma$ which can induce deformations of type (D).}
				\label{fig:(D)type}
			\end{figure}
			Then there exist two cocycles $\tphi^1$ and $\tphi^2$ which is given by
			\begin{align*}
				\tphi_{\alpha\beta}^1&=\lambda e_1,\\
				\tphi_{\beta\alpha}^1&=\lambda e_2,\\
				\tphi_{C_\beta^{\m(w)}\beta}^1&=\lambda \alpha^*,
			\end{align*}
			with $\lambda\in\kk$ (for the definition of $\alpha^*$, see Subsection \ref{subsection:def-of-BGA}) and 
			$$\tphi_{\beta\alpha}^2=\kappa C_\beta^{\m(w)},$$
			with $\kappa\in\kk$.  Here, $e_1$ (resp.\ $e_2$) is the idempotent corresponding to $\bar h_1 $ (resp.\ $\bar h_2$). We define $\tphi_s = 0$ for all the other $s\in S.$ 
		\end{enumerate}
	\end{proposition}
	\begin{proof}
		This follows from the computations in Appendix, see Remarks \ref{rmk:cocycle} and \ref{rmk:coboundaries}.
	\end{proof}	
	
	Define the following set: $$\mathrm{S_{2\text{-}cyc}}=\{\alpha\beta\;|\;\text{$\alpha\beta$ is a cycle in $B_\Gamma$ with $\alpha\neq\beta$, $\alpha\beta,\beta\alpha\in I_\Gamma$}\}.$$
	Then as a corollary, we have the following result.	
	
	\begin{corollary}\label{dimhh2}
		Let $B_\Gamma$ be a bipartite Brauer graph algebra corresponding to the bipartite Brauer graph $(\Gamma,\m)$. If $B_\Gamma$ is not a local Brauer tree algebra, then $$\dim_\kk\hh^2(B_\Gamma)=2+\sum_{v\in V(\Gamma)}(\m(v)-1)+|E|-|V|+|S_{2\text{-}cyc}|.$$
	\end{corollary}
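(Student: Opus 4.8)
The plan is to read the dimension directly off the explicit basis of $\hh^2(B_\Gamma)$ supplied by Proposition \ref{prop:cocycle}. Under the standing hypothesis that $B_\Gamma$ is not a local Brauer tree algebra, that proposition exhibits the standard cocycles of types (A), (B), (C), (D) as a $\kk$-linear basis of $\hh^2(B_\Gamma)$, so it suffices to count how many linearly independent cocycles each type contributes and to add the four numbers. First I would record that type (A) contributes exactly one basis vector: the cocycle there is determined up to the scalar $\lambda$, hence spans a one-dimensional subspace. Type (B) contributes $\sum_{v\in V}(\m(v)-1)$ vectors, since for each vertex $v$ the index $i$ ranges over $1\le i\le\m(v)-1$ and each value of $i$ yields one cocycle $\tphi^i$ (again determined up to a scalar), the two cases $v\in V_1$ and $v\in V_2$ being mutually exclusive as $V=V_1\sqcup V_2$.

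For type (C), recall that these cocycles are indexed by the edges of $\Gamma$ lying outside a fixed spanning tree $T$. Since $\Gamma$ is connected, every spanning tree has exactly $|V|-1$ edges, so the number of edges not in $T$ equals $|E|-(|V|-1)=|E|-|V|+1$; this coincides with $\mathrm{rank}\,\mathrm{H}^1(\Gamma)$ and is independent of the choice of $T$. Hence type (C) contributes $|E|-|V|+1$ basis vectors.

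The only step requiring genuine bookkeeping is the count of type (D). Each type (D) configuration is a \emph{bigon}: a pair of edges $\bar h_1,\bar h_2$ joining a vertex $v\in V_1$ to a vertex $w\in V_2$ that are consecutive in the rotations at both endpoints (encoded by $h_1^+=h_2$ and $\iota(h_1)^-=\iota(h_2)$), and to each such configuration Proposition \ref{prop:cocycle} attaches exactly two cocycles $\tphi^1,\tphi^2$. I would then match these against $\mathrm{S_{2\text{-}cyc}}$ as follows. A bigon carries arrows $\alpha,\beta$ with $\alpha\ne\beta$ and $\alpha\beta,\beta\alpha\in I_\Gamma$, producing the two distinct $2$-cycles $\alpha\beta$ and $\beta\alpha$ (related by cyclic rotation and based at the distinct vertices $\bar h_1,\bar h_2$). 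Conversely, every element of $\mathrm{S_{2\text{-}cyc}}$ is a $2$-cycle $\alpha\beta$ with $\alpha\ne\beta$ and $\alpha\beta,\beta\alpha\in I_\Gamma$; in a bipartite Brauer graph this forces $\alpha,\beta$ to run between a single pair $\{v,w\}$ with $v\in V_1,\,w\in V_2$ and to be consecutive in both rotations, i.e.\ to arise from a unique bigon. Thus the bigons are in bijection with the unordered pairs $\{\alpha\beta,\beta\alpha\}$, giving $|\mathrm{S_{2\text{-}cyc}}|=2\cdot(\#\text{bigons})$, while those same bigons supply $2\cdot(\#\text{bigons})$ cocycles; therefore type (D) contributes exactly $|\mathrm{S_{2\text{-}cyc}}|$ basis vectors.

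Summing the four contributions yields
\[
\dim_\kk\hh^2(B_\Gamma)=1+\sum_{v\in V}(\m(v)-1)+\bigl(|E|-|V|+1\bigr)+|\mathrm{S_{2\text{-}cyc}}|,
\]
and absorbing the two stray $+1$'s (one from type (A), one from the Betti-number term of type (C)) into the constant $2$ gives the claimed formula. The main obstacle is precisely the type (D) bijection: one must verify that distinct bigons never produce the same $2$-cycle and that bipartiteness excludes any other source of elements of $\mathrm{S_{2\text{-}cyc}}$, so that $2\cdot(\#\text{bigons})$ is neither an over- nor an undercount. The remaining three counts are immediate from the shape of the cocycles recorded in Proposition \ref{prop:cocycle}.
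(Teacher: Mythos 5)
Your proposal is correct and follows essentially the same route as the paper: the paper's proof of Corollary \ref{dimhh2} likewise just counts the standard cocycles of types (A)--(D) from Proposition \ref{prop:cocycle} as $1$, $\sum_{v\in V}(\m(v)-1)$, $|E|-|V|+1$, and $|\mathrm{S_{2\text{-}cyc}}|$ respectively, and sums them. Your extra care in matching bigons with pairs $\{\alpha\beta,\beta\alpha\}$ in $\mathrm{S_{2\text{-}cyc}}$ is a harmless elaboration of the same count.
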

	\begin{proof}
		Note that the numbers of cocycles of  type (A)-(D) in Proposition \ref{prop:cocycle} are respectively $$1, \sum_{v\in V(\Gamma)}(\m(v)-1), \ |E|-|V|+1,\  |S_{2\text{-}cyc}|.$$ By Proposition \ref{prop:cocycle} these cocycles form a basis of $\hh^2(B_\Gamma).$
	\end{proof}

\subsection{Deformations arising from the second Hochschild cohomology group}\label{sec:deformation}

		It is well-known that for any associative algebra $A$ with the multiplication map $\mu\colon A\otimes A\rightarrow A$, the elements of the second Hochschild cohomology group corresponds bijectively to the infinitesimal deformations (up to equivalences) of $A$, and any formal deformation of $A$ is equivalent to a special formal deformation (where the multiplication is given by $\mu_t=\mu+\mu_nt^n+\mu_{n+1}t^{n+1}+\cdots$), in which the first non-vanishing $\mu_n$ is a non-trivial Hochschild 2-cocycle (see \cite{Ger2}). Therefore, it is natural to understand the second Hochschild cohomology group through deformation theory. Furthermore, \cite[Theorem 7.1]{BW} establishes that formal deformations of quiver algebras are equivalent to formal deformations of reduction systems.

		For completeness, we recall the definition of formal deformations of reduction systems. Assume that $A=\Bbbk Q/I$ is a quiver algebra and $R$ is a reduction system for $\Bbbk Q$ satisfying condition ($\diamond$) for $I$. Denote by $\mathfrak{m}$ the maximal ideal $\langle t\rangle$ of $\Bbbk[[t]]$. Let 
		$${\widehat{\varphi}}\in\ho_{\kk Q_0^e}(\kk S, A)\widehat{\otimes}\mathfrak{m}$$
		so that $\tphi$ can be written as 
		$${\widehat{\varphi}}=\tphi_1t+\tphi_2t^2+\tphi_3t^3+\cdots$$
		with $\tphi_i\in\ho_{\kk Q_0^e}(\kk S, A)$ and let ${\widehat{\varphi}}^{(n)}=\tphi_1t+\cdots+\tphi_nt^n$ denote the image under tensoring by $-\otimes_{\Bbbk[[t]]}\Bbbk[t]/\langle t^{n+1}\rangle$.
		
		\begin{definition}
			A formal deformation of a reduction system $R=\{(s,\varphi_s)\}_{s\in S}$ for $\kk Q$ is given by 
		$$\widehat{R}_{\varphi+{\widehat{\varphi}}}=\{(s,\varphi_s+{\widehat{\varphi}}_s)\}_{s\in S},$$
		where ${\widehat{\varphi}}\in\ho_{\kk Q_0^e}(\kk S, A)\widehat{\otimes}\mathfrak{m}$ such that for each $n\geq 1$, the reduction system $$R_{\varphi+{\widehat{\varphi}}}^{(n)}=\{(s,\varphi_s+{\widehat{\varphi}}_s^{(n)})\}_{s\in S}$$ for $\Bbbk Q[t]/\langle t^{n+1}\rangle\simeq (\Bbbk[t]/\langle t^{n+1}\rangle)Q$ satisfies that every path in $Q$ is reduction-unique (or equivalently, the reduction system $R_{\varphi+{\widehat{\varphi}}}^{(n)}$ satisfies the condition ($\diamond$) for the ideal $\Bbbk[t]/\langle t^{n+1}\rangle\otimes_{\Bbbk[[t]]}I[[t]]$ of the path algebra $(\Bbbk[t]/\langle t^{n+1}\rangle)Q$ over the commutative ring $\Bbbk[t]/\langle t^{n+1}\rangle$).
		\end{definition}
		
The reason that $R^{(n)}_{\varphi+{\widehat{\varphi}}} =\{(s,\varphi_s+{\widehat{\varphi}}^{(n)}_s)\}_{s\in S}$ is a reduction system for $\Bbbk Q[t]/\langle t^{n+1}\rangle
\simeq (\Bbbk[t]/\langle t^{n+1}\rangle)Q$
is as follows. As in Definition~\ref{def:red-sys}, the set $S$ is tip--reduced, each $s$ is parallel to $\varphi_s+{\widehat{\varphi}}^{(n)}_s$, and every path occurring in $\varphi_s+{\widehat{\varphi}}^{(n)}_s$ is irreducible, since each ${\widehat{\varphi}}_s$ is a linear combination of irreducible paths in $A[[t]]$. Moreover, $\widehat{R}_{\varphi+{\widehat{\varphi}}}$ is a formal deformation of $R$ if and only if
\[
\Bbbk Q[[t]] \big/ \langle s-\varphi_s-{\widehat{\varphi}}_s\rangle^{\hat{}}_{s\in S}
\]
is a formal deformation of $A=\Bbbk Q/I$, where the underlying $\Bbbk[[t]]$--module is identified with $A[[t]]$ and $\langle s-\varphi_s-{\widehat{\varphi}}_s\rangle^{\hat{}}_{s\in S}$ denotes the two--sided ideal of $\Bbbk Q[[t]]$ generated by the elements $\{s-\varphi_s-{\widehat{\varphi}}_s\}_{s\in S}$, viewed as a formal
deformation of $I=\langle s-\varphi_s\rangle_{s\in S}$ in the sense of
\cite[Section~7.1]{BW}. Equivalently, this means that $(A[[t]],\mu_t)$ is an associative $\Bbbk[[t]]$--algebra, where the multiplication $\mu_t$ is induced from the quotient $\Bbbk Q[[t]]/\langle s-\varphi_s-{\widehat{\varphi}}_s\rangle^{\hat{}}_{s\in S}$. For instance, for each $s\in S$, the original multiplication satisfies $\mu(s,1)=\varphi_s$, while after deformation one has $\mu_t(s,1)=\varphi_s+{\widehat{\varphi}}_s$.

		In Theorem \ref{thm:infinitesimaldeformation}, by constructing formal deformations of the reduction system, we will prove that for a given bipartite Brauer graph algebra, every infinitesimal deformation corresponding to a standard $2$-cocycle in Proposition \ref{prop:cocycle} can be lifted to a formal deformation. Moreover, the algebraization of these formal deformations induces corresponding deformations of the Brauer graph algebra (that is, new associative algebra structures defined on the underlying vector space). These deformations can also be understood through the following algebraic variety.

		Recently, Green, Hille and Schroll \cite{GHS} constructed an algebraic variety for a given finite quiver $Q$, induced by the reduced Gr\"{o}bner bases in $\Bbbk Q$ (which may be viewed as special reduction systems) that share the same tip set. Each point on this variety can be regarded as a deformation of the reduction system, thereby corresponding to a deformation of the quiver algebra. %Furthermore, \cite[Theorem 9.4]{BW} demonstrates that this variety provides a low-dimensional blueprint for the classical variety of associative algebras (defined in \cite{Gab}).

		To be more specific, let $A=\Bbbk Q/I$ be a quiver algebra and $R=\{(s,\varphi_s)\}_{s\in S}$ be a reduction system for $\Bbbk Q$ satisfying condition ($\diamond$) for $I$. For each $\tphi\in\ho_{\kk Q_0^e}(\kk S, A)$, we can deform $R$ as follows:
		$$R_{\varphi+\tphi}:=\{(s,\varphi_s+\tphi_s)\}_{s\in S},$$
		and the corresponding associative algebra is given by
		$$A_{\varphi+\tphi}:=\Bbbk Q/ \langle s-\varphi_s-\tphi_s\rangle.$$
		Actually, $A$ and $A_{\varphi+\tphi}$ belong to the same algebraic variety defined in \cite{GHS} if and only if $R_{\varphi+\tphi}$ is also a reduction system for $\Bbbk Q$ satisfying condition ($\diamond$) for $\langle s-\varphi_s-\tphi_s\rangle$.
		
		For a given bipartite Brauer graph algebra $B_\Gamma$ with the reduction system $R_\Gamma$ defined in Subsection \ref{subsection:reductionsystem}, the following theorem shows that when the chosen $\tphi$ is a $2$-cocycle as described in Proposition \ref{prop:cocycle}, the resulting reduction system yields a deformation of $B_\Gamma$.
		
	\begin{theorem}\label{thm:infinitesimaldeformation}
		Let  $\Gamma=\Gamma[V_1,V_2]$ be a bipartite Brauer graph and $B_\Gamma$ be the associated Brauer graph algebra. If $B_\Gamma$ is not a local Brauer tree algebra, then for each standard $2$-cocycle $\tphi$ defined in Proposition \ref{prop:cocycle}, 
		$$\widehat{R}_{\varphi+\tphi}=\{(s,\varphi_s+\tphi_s t)\}_{s\in S}$$ is a formal deformation of $R_\Gamma$.
		
		In particular,  setting $t=1$, the formal deformations of $R_\Gamma$ induce deformations of the algebra $B_\Gamma$ by deforming the generating relations in $B_\Gamma$ (as defined in Subsection \ref{subsection:def-of-BGA}). More precisely, we have the following description.
		
				\begin{enumerate}[(A)]
			\item Recall that for each edge in $\Gamma$, we have the commutative relation $C_\alpha^{\m(C_\alpha)}=C_\beta^{\m(C_\beta)}$ with $C_\alpha$  a cycle around a vertex $v\in V_1$. There is a deformation parametrized by $\lambda$ given by replacing all the commutative relations $C_\alpha^{\m(C_\alpha)}=C_\beta^{\m(C_\beta)} $ with
			$$C_\alpha^{\m(C_\alpha)}=C_\beta^{\m(C_\beta)}-\lambda t(C_\alpha)$$ where $t(C_\alpha)$ is the idempotent corresponding to the terminus vertex of $C_\alpha$. Keep the other relations. 
			
			\item Fix a cycle $C_\alpha=\alpha_m\cdots\alpha_1$ (up to cyclic permutation) and let $i\in \{1,2,\cdots,\m(C_\alpha)-1\}$. Write $C_{\alpha_k}=  \alpha_{k} \dotsc \alpha_1\alpha_m \dotsc\alpha_{k-1}$. %For the corresponding relations $C_{\alpha_k}^{\m(C_\alpha)}-C_{\beta_k}^{\m(C_{\beta_k})}$ ($1\leq k\leq m$) in $I_\Gamma$, 
			We have the following deformations given by replacing every relation $C_{\alpha_k}^{\m(C_\alpha)}-C_{\beta_k}^{\m(C_{\beta_k})}=0$ ($1\leq k\leq m$) with
			$$C_{\alpha_k}^{\m(C_\alpha)}-C_{\beta_k}^{\m(C_{\beta_k})}+\mu_i^{(C_\alpha)}C_{\alpha_k}^i =0.$$ Keep the other relations. 
			
			\item Fix a spanning tree $T$ of $\Gamma$. For each edge not in $T$, there corresponds a commutative relation $C_{\alpha}^{m}-C_{\beta}^{n} =0$ where $m=\m(C_{\alpha})$, $n=\m(C_{\beta})$. In particular, we assume that $C_{\alpha}$ is a cycle induced by a vertex $v\in V_1$. This yields the following deformation of the relation:
			$$C_{\alpha}^{m}-C_{\beta}^{n}+\kappa^{(C_{\alpha})}C_{\beta}^{n}=0.$$ Keep the other relations and add $C_\gamma^{\m(C_\gamma)}\gamma=0$ for each cycle $C_\gamma$ around some vertex in $V_2$. 
			
			\item Let $\alpha\beta$ be a $2$-cycle in $ Q_\Gamma$, where $\alpha$ is an arrow around some vertex in $V_1$, $\beta$  is an arrow around some vertex in $V_2$. In fact, we have that both $\alpha\beta=0$ and $\beta\alpha=0$ are in $B_\Gamma$. Then we have the following deformations of these relations:
			\begin{itemize}
				\item replace both $\alpha\beta=0$ and $\beta\alpha=0$ with
			$$\alpha\beta+\lambda't(\alpha)=0,$$
			$$\beta\alpha+\lambda't(\beta)=0$$   respectively. Keep the other relations.
			\item  replace the relation $\beta\alpha=0$ with
			$$\beta\alpha+\kappa'C_\beta^{\m(C_\beta)}=0.$$  Keep the other relations.
			\end{itemize}			
		\end{enumerate}
	\end{theorem}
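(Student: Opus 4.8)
The plan is to reduce the statement, via the correspondence of \cite[Theorem 7.1]{BW} between formal deformations of the quiver algebra $B_\Gamma$ and formal deformations of its reduction system $R_\Gamma$, to a direct verification of the defining property of a formal deformation of $R_\Gamma$ for each of the four families of standard cocycles of Proposition \ref{prop:cocycle}. Since every such $\tphi$ is in particular a $2$-cocycle, the pair $\widehat{R}_{\varphi+\tphi}=\{(s,\varphi_s+\tphi_s t)\}_{s\in S}$ already defines a consistent first-order (infinitesimal) deformation. Because the proposed $\tphi$ is \emph{linear} in $t$, the only additional thing to prove is that the truncated reduction systems $R_{\varphi+\tphi}^{(n)}$ satisfy the diamond condition ($\diamond$) over $\Bbbk[t]/\langle t^{n+1}\rangle$ for all $n\ge 1$. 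For a linear element this collapses to a single quadratic obstruction: the order-$t$ parts of every ambiguity resolution agree by the cocycle identity $\partial^2\tphi=0$, so the whole deformation is formal precisely when the order-$t^2$ parts also cancel, i.e.\ when the Maurer--Cartan bracket $[\tphi,\tphi]$ vanishes.

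First I would record the routine part. As noted after the definition of formal deformation in Subsection \ref{sec:deformation}, the tip set $S$ is unchanged under the deformation, so $S$ is still tip-reduced, each $s$ is parallel to $\varphi_s+\tphi_s^{(n)}$, and each path in $\varphi_s+\tphi_s^{(n)}$ is irreducible; hence $R_{\varphi+\tphi}^{(n)}$ is automatically a reduction system, and only reduction-uniqueness remains. By Bergman's Diamond Lemma \cite[Theorem 1.2]{Ber} this is a local condition: it suffices to resolve, in the deformed system, each $1$-ambiguity $uvw\in S_3$ and to check that the two reduction sequences terminate at the same irreducible element. Since the ambiguities are governed by the same combinatorics of $S$ as in the proof of Proposition \ref{prop:reductionsystem}, I would traverse the finite list of overlaps among the cycle tips $C_\alpha^{\m(C_\alpha)}$ (type (a)), the monomial tips $\alpha\beta$ (type (c)), and the tips $C_\beta^{\m(w)}\beta$ (type (b)), and verify case by case that the order-$t^2$ discrepancy is zero; the combinatorial description of $\ker\partial^2$ imported into Appendix \ref{appendix:2-cocycle} is exactly the bookkeeping device needed to organize these resolutions.

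Next I would algebraize each deformation by setting $t=1$ and rewriting the deformed pairs $(s,\varphi_s+\tphi_s)$ as deformed generating relations of $B_\Gamma$. For type (A), the pair $(C_\alpha^{\m(C_\alpha)},\,C_\beta^{\m(C_\beta)}+\lambda e)$ together with the $(C_\beta^{\m(w)}\beta,-\lambda\beta)$ relations yields the commutative relation $C_\alpha^{\m(C_\alpha)}=C_\beta^{\m(C_\beta)}-\lambda\,t(C_\alpha)$; for type (B) the cocycle produces the corrected cycle relation $C_{\alpha_k}^{\m(C_\alpha)}-C_{\beta_k}^{\m(C_{\beta_k})}+\mu_i^{(C_\alpha)}C_{\alpha_k}^{i}=0$ around a vertex of $V_1$; for type (C) the fixed spanning tree $T$ singles out the non-tree edges and gives $C_\alpha^{m}-C_\beta^{n}+\kappa^{(C_\alpha)}C_\beta^{n}=0$, retaining the auxiliary relations $C_\gamma^{\m(C_\gamma)}\gamma=0$; and for type (D) the two cocycles $\tphi^1,\tphi^2$ translate into the two displayed deformations of the $2$-cycle relations $\alpha\beta=0$ and $\beta\alpha=0$. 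The ambiguity analysis of the previous paragraph then shows in each case that the irreducible paths of $S$ remain a $\Bbbk[[t]]$-basis, so the deformed relations do define an associative $\Bbbk[[t]]$-algebra, which is the content of the theorem.

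I expect the main obstacle to lie in type (D). There the deformation turns the \emph{monomial} relations $\alpha\beta=0$, $\beta\alpha=0$ into relations with nonzero right-hand sides ($\alpha\beta+\lambda' t(\alpha)=0$ and $\beta\alpha+\lambda' t(\beta)=0$, or $\beta\alpha+\kappa'C_\beta^{\m(C_\beta)}=0$), so that overlaps such as those underlying $(\beta\alpha)\beta$ and $\alpha(\beta\alpha)$, together with the interaction of the newly appearing idempotent and cycle terms with the type-(b) tip $C_\beta^{\m(w)}\beta$, must be resolved with care. Confirming that the quadratic-in-$t$ contributions cancel in exactly these overlaps---equivalently that $[\tphi^i,\tphi^i]=0$ for $i=1,2$---is the technical heart of the argument; by contrast, for types (A)--(C) the deformed relations keep the same tips and only shift the lower-order tail, so their overlaps resolve by essentially the same reductions as in Proposition \ref{prop:reductionsystem}.
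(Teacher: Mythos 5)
Your proposal follows essentially the same route as the paper: invoke the Barmeier--Wang correspondence to reduce the statement to checking that every $1$-ambiguity $uvw\in S_3$ resolves consistently under the deformed reduction system $\{(s,\varphi_s+\tphi_s t)\}_{s\in S}$, carry out this finite case-by-case verification (in parallel with the cocycle computations of Appendix \ref{appendix:2-cocycle}), and then set $t=1$ to read off the deformed generating relations of types (A)--(D); this is exactly the paper's proof, which cites \cite[Theorems 7.39 and 7.44]{BW} and illustrates the check on the Case~8 ambiguity $\beta\alpha\alpha^*=\beta C_\alpha^m$.

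One caveat: your claim that, since $\tphi$ is linear in $t$, formality ``collapses to a single quadratic obstruction'' $[\tphi,\tphi]=0$ is an oversimplification. The paper stresses that the associativity identity must hold with the coefficients of $t^n$ agreeing for \emph{all} $n$: substituting $\varphi_s+\tphi_s t$ for a tip can create a new tip whose subsequent reduction injects further powers of $t$, and the $L_\infty$-algebra controlling deformations of reduction systems in \cite{BW} has higher brackets, so vanishing of the order-$t^2$ discrepancy alone is not a priori sufficient. This does not derail your argument, because the procedure you actually describe --- resolving each ambiguity completely in the deformed system and comparing the terminal irreducible elements --- automatically tests all orders of $t$; but the verification should be framed as that full comparison (as in the paper), not merely as the vanishing of the degree-two bracket. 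You also leave implicit the paper's final observation that the redundant tips $(C_\gamma^{\m(C_\gamma)}\gamma,0)$ of type (b) are generated by the other relations (Lemma \ref{iso}), so their deformed values are forced by those of types (a) and (c) when translating back to the presentation of $B_\Gamma$; this is a minor but necessary step in the algebraization at $t=1$.
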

	
	\begin{proof}
		By \cite[Theorems 7.39 and 7.44]{BW}, it suffices to verify that for every path $uvw \in S_3$ with $uv, vw \in S$, the following associativity condition holds under the deformed multiplication: 
	\begin{equation}
(\pi(u)\star\pi(v))\star\pi(w)
=
\pi(u)\star(\pi(v)\star\pi(w)).
\tag{$\ast$}
\end{equation}
		Here, $\pi: \kk Q \twoheadrightarrow \kk Q/I$ is the natural projection onto the space of irreducible paths $\irr_S$, and $\star = \star^C_{\varphi+\tphi t}$ is the multiplication in the deformed algebra $\Bbbk Q[[t]] \big/ \langle s-\varphi_s-\tphi_s t\rangle^{\hat{}}_{s\in S}$, defined by multiplying paths in $\kk Q[[t]]$ and reducing via the reduction system $\widehat{R}_{\varphi+\tphi t}$. This reduction process means that (see \cite[Section 7.3]{BW}) for any path $p$ containing a subpath $s$ (i.e., $p = qsr$), we replace the occurrence of $s$ with $q(\varphi_s + \tphi_st)r$. The reduction process continues iteratively until the result is irreducible. The reduction used here is consistent with the one introduced at the beginning of Appendix~\ref{appendix:2-cocycle}. We emphasize that, in order to determine whether this deformation can be lifted, the equality~$(\ast)$ is required to hold in a formal sense, that is, the coefficients of $t^n$ on both sides must agree for all $n$. By contrast, when computing the $2$-cocycle in Appendix~\ref{appendix:2-cocycle}, it suffices for the equality~$(\ast)$ to hold modulo $t^2$.

		This verification is straightforward (in direct analogy to the verification provided in Appendix \ref{appendix:2-cocycle}). We illustrate this with a concrete example. Let $\tphi$ be the standard cocycle of type (A) defined in Proposition \ref{prop:cocycle} and consider the $1$-ambiguity $\beta\alpha\alpha^* = \beta C_\alpha^m$ from Case 8 in Appendix \ref{appendix:2-cocycle}, corresponding to $u=\beta$, $v=\alpha$, $w=\alpha^*$. We now verify the associativity condition for the deformed product:
		\begin{align*}(\pi(\beta)\star\pi(\alpha))\star\pi(\alpha^*)&=0\star\pi(\alpha^*)=0;\\\pi(\beta)\star(\pi(\alpha)\star\pi(\alpha^*))&=\pi(\beta)\star(C_{\sigma(\beta)}^n+\lambda et)=((\varphi_{\beta C_{{\sigma(\beta)}}^n}+\tphi_{\beta C_{{\sigma(\beta)}}^n} t)+\lambda \beta t)=((0-\lambda \beta t)+\lambda \beta t)=0,            \end{align*} where $e=t(C_\alpha)$ denotes the idempotent corresponding to the terminus vertex of $C_\alpha$.
		Then by \cite[Theorems 7.39 and 7.44]{BW}, this reduction system is a formal deformation of $R_\Gamma$. 
		
		By the isomorphism in Lemma \ref{iso}, each relation corresponding to $(C_\alpha^{\m(C_\alpha)}\alpha,0)$ in $R_\Gamma$ can be generated by other two types of relations in $R_\Gamma$. By Proposition \ref{prop:cocycle}, the coefficients which defined deformations of  $(C_\alpha^{\m(C_\alpha)}\alpha,0)$ can be induced by deformations of the other two types of relations in $R_\Gamma$.
	\end{proof}

	\begin{remark}
		According to \cite[Theorem 5.4]{MRRS} and \cite{BSW1}, all infinitesimal deformations of gentle algebras can be simultaneously lifted to formal deformations.
		
		For Brauer graph algebras (which are the trivial extensions of gentle algebras), our Theorem \ref{thm:infinitesimaldeformation} proves that a specific basis of infinitesimal deformations (namely, those corresponding to standard $2$-cocycles) for bipartite Brauer graph algebras can be lifted to formal deformations. However, not all infinitesimal deformations corresponding to arbitrary $2$-cocycles are liftable. For example, let $$\tphi=(\tphi^1+\tphi^2)t$$ where $\tphi^1$ and $\tphi^2$ are different standard $2$-cocyles of type (D) in Proposition \ref{prop:cocycle}. Then $\widehat{R}_{\varphi+\tphi}=\{(s,\varphi_s+\tphi_s)\}_{s\in S}$ is not a formal deformation of $R_\Gamma$: Consider the $1$-ambiguity $\alpha\beta\alpha$ from Case 5 in Appendix \ref{appendix:2-cocycle}, corresponding to $u=\beta$, $v=\alpha$, $w=\alpha^*$. We now verify the associativity condition for the deformed product:
		\begin{align*}(\pi(\alpha)\star\pi(\beta))\star\pi(\alpha)&=(\lambda t(\alpha) t)\star\pi(\alpha^*)=\lambda\alpha t;\\\pi(\alpha)\star(\pi(\beta)\star\pi(\alpha))&=\pi(\alpha)\star(\lambda t(\beta) t+\kappa C_\beta^{\m(C_\beta)} t)=(\lambda \alpha t+\kappa \beta^* t^{2}),            \end{align*} where $t(\alpha)$ and $t(\beta)$ denotes the idempotent corresponding to the terminus vertex of $\alpha$ and $\beta$ respectively. Therefore, 
		$$(\pi(\alpha)\star\pi(\beta))\star\pi(\alpha)\neq \pi(\alpha)\star(\pi(\beta)\star\pi(\alpha)).$$
		Then by \cite[Theorems 7.39 and 7.44]{BW}, this reduction system is not a formal deformation of $R_\Gamma$.  
		
		This demonstrates that two type (D) infinitesimal deformations cannot be simultaneously lifted to formal deformations. This phenomenon has a natural interpretation in the surface model (see Subsection \ref{subsection:typeD}): both deformations correspond to deformations of a boundary component with winding number $-2$, but the same boundary cannot be simultaneously compactified into both a smooth point and a singular point.
	\end{remark}
	
	We give the deformations of local Brauer tree algebras in the following example.
	
	\begin{example}\label{exam:localbrauer}
		Let $B=\kk\langle x\rangle/\langle x^{m+1}\rangle$, which is a local Brauer tree algebra with an exceptional vertex $v$ such that $\m(v)=m$. Then each $2$-cochain is of the following general form
		$$\tphi_{x^{m+1}}=\lambda+\sum_{i=1}^{m-1}\varepsilon_i x^i+\nu x^m.$$
		It is obvious that each group of coefficients $(\lambda,\varepsilon_1,\cdots,\varepsilon_{m-1},\nu)$ induces a $2$-cocycle of $B$. 
		
		Moreover, each $1$-cochain has a general form as
		$$\psi(x)=a+\sum_{i=1}^{m}b_i x^i.$$
		Then, for two different $2$-cocycles $\tphi$ and $\tphi'$ with $$\tphi_{x^{m+1}}=\nu x^m,$$$$\tphi'_{x^{m+1}}=\nu' x^m,$$ their difference is a coboundary:  $\tphi-\tphi'=\langle\partial^1(\psi_0)\rangle$ (see Section \ref{sec:differential} for the explicit formula of $\partial^1$). This implies that the coefficient $\nu$ can be adjusted arbitrarily by adding a coboundary. Specifically, the $1$-cochain $\psi_0$ defined by $\psi_0(x)=\upsilon_0$ (for some given $\upsilon_0\in k^*$) yields a $2$-coboundary $\partial^1(\psi_0)$ that changes the value of $\nu$. Therefore, $\nu$ corresponds to a trivial class in the second Hochschild cohomology group, meaning it represents a $2$-coboundary. Thus $\dim_\kk\hh^2(B)=m$.
		Moreover, the deformations of $B$ can be represented by 
		$$B=\kk\langle x\rangle/\langle x^{m+1}+\lambda+\sum_{i=1}^{m-1}\varepsilon_i x^i\rangle.$$ 
	\end{example}
	
	\begin{remark}
		If $\Gamma$ is a Brauer tree with an exceptional vertex $v\in V(\Gamma)$, then it follows from Corollary \ref{dimhh2} and Example \ref{exam:localbrauer} that  $\dim_\kk\hh^2(B_\Gamma)=\m(v)$, which was already shown in \cite[Theorem 4.4]{Holm}.
	\end{remark}
	
	By regarding each ribbon graph as an admissible arc system on its associated surface model, the following example shows that each punctured surface with at least two punctures admits an admissible arc system which can be regarded as a bipartite ribbon graph. 
	Moreover, the dimension formula in Corollary \ref{dimhh2} can be expressed in terms of the surface model's geometric data, such as its boundaries and genus.
	
	\begin{example}\label{exa:bound of graded}
		Let $\Sigma$ be a punctured surface with at least two punctures. Let $\Gamma$ be any admissible arc system of $\Sigma$ and $B_\Gamma$ the associated multiplicity-free Brauer graph algebra. Consider the standard admissible arc system $\Delta$ as illustrated in Figure \ref{fig:bBGA-formal-form}. Note that $\Delta$ is bipartite. 
		\begin{figure}[ht]
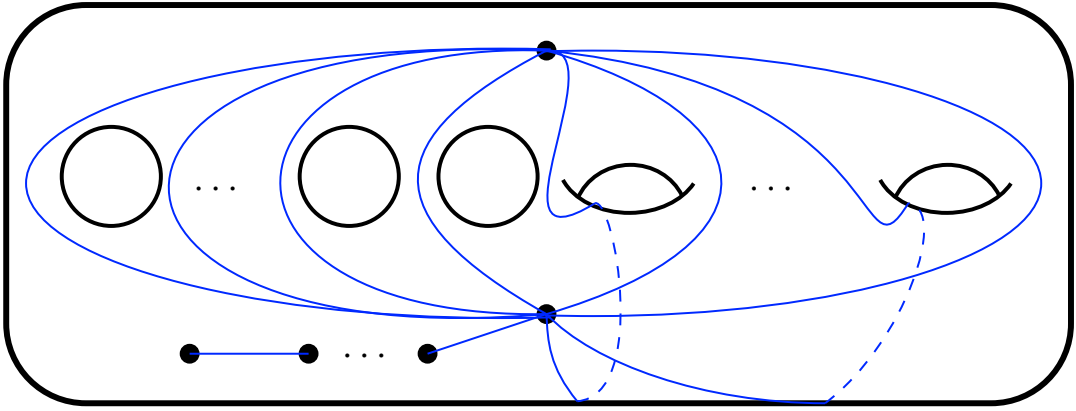

			\centering
			
			\tikzset{every picture/.style={line width=0.75pt}} %set default line width to 0.75pt        
			
			% [inline block 17: 1 envs, 9050 chars -> data_tex | \begin{tikzpicture}[x=0.75pt,y=0.75pt,yscale=-1,xscale=1] 				%uncomment if require: \path (0,235); %set diagram left st...]

			\caption{Bipartite ribbon graph for each punctured surface with at least two punctures.}
			\label{fig:bBGA-formal-form}
		\end{figure}
		Then it follows from \cite[Corollary 6.5]{OZ} that $B_\Gamma$ is derived equivalent to the graded algebra $B_\Delta$ where the grading of $B_\Delta$ is induced by the line field associated to $B_\Gamma$.   %Figure \ref{fig:bBGA-formal-form} (\cite[Corollary 6.5]{OZ}).
		Denote by $b$ (resp.\ $g$) the number of boundaries (resp.\ genus) of $\Sigma$ (where punctures are not considered as boundaries of $\Sigma$). Let $\partial \Sigma_{-2}$ be the set of boundaries whose winding numbers are equal to $-2$. Then since the deformations of type (A) and type (C) are induced by homogeneous elements, and the deformations of type (D) are totally depended on the winding numbers of boundaries, it follows that these cocyles are still well-defined and lie in $\hh^2(B_\Delta)$ even though $B_\Delta$ is graded. Therefore, we have
		$$\dim_\kk\hh^2(B_\Gamma)= \dim_\kk\hh^2(B_\Delta)\geq 2g+b+2|\partial\Sigma_{-2}|.$$
		Note that $2g+b-1 = |E|-|V|+1= \mathrm{rank} (\mathrm H^1(\Sigma))$, compare to the formula \eqref{align:formalhh}. 
		
		In general, the above inequality may be strict if $\Gamma$ is either non-partite and concentrated in degree zero, or partite with nontrivial grading, see Example \ref{example:annuli-with-2-punctures}.
		
		%See Example \ref{exa:graded-simple} where the inequality  holds.
	\end{example}

	\section{Deformations of surface models of bipartite Brauer graph algebras}\label{sec:4}
	
	According to \cite{Kel}, studying the deformation of an associative algebra can be viewed as studying the deformation of its (dg) derived category. Meanwhile, based on \cite{OZ}, the derived category of a Brauer graph algebra can be regarded as the homotopy category of the twisted complex category of the Brauer graph $A_\infty$-category corresponding to its surface model. Therefore, we can attempt to understand the deformation of each (bipartite) Brauer graph algebra through the deformation of its surface model. 
	
	In this section, we give geometric explanations of the deformations of bipartite Brauer graph algebras obtained in Theorem \ref{thm:infinitesimaldeformation}. We assume that $B_\Gamma$ is not a local Brauer tree algebra.
	
	By \cite{GSS} and \cite[Section 5]{OZ}, each Brauer graph $(\Gamma, \m)$ (with $\m\neq 1$) admits a branched cover $\Gamma^\m$, which is a multiplicity-free ($\m=1$) Brauer graph and the original Brauer graph category $B_\Gamma$ is an orbit category of the Brauer graph $A_\infty$-category $B_{\Gamma^\m}$ associated to the branched cover.  It follows from Theorem \ref{thm:infinitesimaldeformation} that the deformations of type (B) only depend on the multiplicity function $\m$. From this point of view, the deformations of type (B) for $B_\Gamma$ shall be induced from the deformations of type (A), (C) and (D) of $B_{\Gamma^\m}$. For this, we need to compare the relationship between the Hochschild cohomology of an $A_\infty$-category and its orbit category,  which will be explored in future work.

	%The deformations of type (A), (C), and (D) are more basic and depend on surface models of Brauer graph algebras. 
	
	Therefore, in this section, we will only focus on deformations of type (A), (C) and (D) and give geometric explanations for these three types.

	\subsection{Deformations of type (A)}\label{subsection:typeA}
	
	Let $(\Gamma=\Gamma[V_1,V_2],\m)$ be a bipartite Brauer graph. For each edge $\bar h\in E(\Gamma)$, $\bar h$ connects a vertex $v\in V_1$ to some $w\in V_2$. Denote by $C_{v,\bar h}$ the cycle of the form $C_\alpha$ where $\alpha$ is the arrow starting from $\bar h$ around $v$, and  the cycle of the form $C_\beta$ where $\beta$ is the arrow starting from $\bar h$ around $w$ by $C_{w,\bar h}$. The generic deformation of type (A) is actually given by replacing each relation $C_{v,\bar h}^{\m(v)}-C_{w,\bar h}^{\m(w)}=0$ by the relation $C_{v,\bar h}^{\m(v)}-C_{w,\bar h}^{\m(w)}+e_{\bar h}=0$. This is a generalized form of the deformation of Brauer tree algebras in \cite[Section 9.A]{BW}.
	
	Denote the algebra induced by the generic deformation of type (A) by $B_\Gamma^{(A)}$. In general, we can deform each modified Brauer graph algebra $B_\Gamma$ which is induced by a bipartite Brauer graph to get a $B_\Gamma^{(A)}$ in the same way.

	The following proposition shows that the deformation of type (A) will make each bipartite Brauer graph algebra become semisimple.
	
	\begin{proposition}\label{semi A}
		Let $B_\Gamma$ be a bipartite Brauer graph algebra which is not a local Brauer tree algebra. Then 
		$$B_\Gamma^{(A)}\cong \prod_{v\in V(\Gamma)}\mathrm{M}_{\mathrm{val}(v)}(\kk)^{\m(v)}.$$
		Therefore, $B_\Gamma^{(A)}$ is Morita equivalent to $\kk^N$ with $N=\sum_{v\in V(\Gamma)}\m(v)$.
	\end{proposition}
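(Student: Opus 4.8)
The plan is to construct an explicit algebra isomorphism from $B_\Gamma^{(A)}$ onto the semisimple algebra $T:=\prod_{v\in V(\Gamma)}\mathrm{M}_{\mathrm{val}(v)}(\kk)^{\m(v)}$, rather than first establishing semisimplicity abstractly. The model to keep in mind is the cyclic quiver $C_d$ (on $d$ vertices, with full loop $C$ at each vertex) subject to $C^{m}=c\,e$ with $c\in\kk^{*}$: sending $e_i\mapsto E_{ii}$, the non-wrapping arrows to shift matrices $E_{i+1,i}$, and the wrap-around arrow to $xE_{1,d}$ identifies this algebra with $\mathrm{M}_d(\kk[x]/\langle x^{m}-c\rangle)\cong \mathrm{M}_d(\kk)^{m}$, the last step holding because $x^{m}-c$ has $m$ distinct roots over the algebraically closed field $\kk$ of characteristic $0$. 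The whole point of the type (A) deformation of Theorem \ref{thm:infinitesimaldeformation} is that it replaces the socle cycle $C_\alpha^{\m(C_\alpha)}$ by $C_\beta^{\m(C_\beta)}\pm\lambda e$, i.e.\ forces each full cycle to act as a nonzero scalar, so that each vertex of $\Gamma$ contributes exactly one such local matrix block.

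Concretely I work with the presentation $B_\Gamma=\kk Q'_\Gamma/I'_\Gamma$ of Lemma \ref{iso}, so that $B_\Gamma^{(A)}=\kk Q'_\Gamma/\langle s-\varphi_s-\tphi_s\rangle_{s\in S}$ for the type (A) cocycle $\tphi$ of Proposition \ref{prop:cocycle}: the monomial relations $\alpha\beta=0$ ($\beta\neq\sigma(\alpha)$) are undeformed, while the commutative relations become $C_\alpha^{\m(v)}-C_\beta^{\m(w)}-\lambda e_{\bar h}=0$ for each edge $\bar h$ joining $v\in V_1$ to $w\in V_2$. For each vertex $v$ with $d:=\mathrm{val}(v)$, $m:=\m(v)$, and each $k\in\{1,\dots,m\}$, introduce a $d$-dimensional space $V_{v,k}$ with basis $\{x_h : h\in H_v\}$ indexed by the half-edges at $v$, and set $M:=\bigoplus_{v,k}V_{v,k}$, on which $T$ acts block-diagonally. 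I then define $\Phi\colon B_\Gamma^{(A)}\to T$ by sending $e_{\bar h}$ to the projection onto the span of the two basis vectors attached to the half-edges of $\bar h$, and each arrow $\alpha_h$ around $v$ to the cyclic shift $x_h\mapsto x_{h^+}$ inside each $V_{v,k}$ (and $0$ elsewhere), where the wrap-around arrow of the cycle carries a scalar $\zeta_{v,k}$. A direct check shows that $\Phi(C_\alpha)$ acts on $V_{v,k}$ as $\zeta_{v,k}\cdot\mathrm{id}$ and vanishes on every other $V_{v',k'}$.

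The key step is well-definedness, and this is where bipartiteness enters essentially. Evaluating the deformed commutative relation on $e_{\bar h}M$ forces $\zeta_{v,k}^{\,m}=\lambda$ at the endpoint $v\in V_1$ and $\zeta_{w,k'}^{\,\m(w)}=-\lambda$ at the endpoint $w\in V_2$. Since every edge has exactly one endpoint in each class, the rule ``$+\lambda$ on $V_1$, $-\lambda$ on $V_2$'' assigns a consistent scalar to each vertex; for each $v$ one takes $\{\zeta_{v,k}\}_k$ to be the $\m(v)$ distinct roots of $\lambda$ (resp.\ of $-\lambda$), which exist and are distinct because $\lambda\neq 0$ and $\mathrm{char}\,\kk=0$. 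Were $\Gamma$ to contain an odd cycle, transporting the scalar around it would force $\lambda=-\lambda$, obstructing the construction; this is the algebraic shadow of the orientability condition mentioned in the introduction. With this choice the monomial relations, the deformed commutative relations, and the induced deformed type (b) relations (which are consequences of the others, as in the proof of Theorem \ref{thm:infinitesimaldeformation}) all hold, so $\Phi$ is a well-defined homomorphism.

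Finally I upgrade $\Phi$ to an isomorphism. For surjectivity, note that for $j=1,\dots,m$ the elements $\Phi(C_\alpha^{\,j})=\sum_k\zeta_{v,k}^{\,j}E_{hh}^{(v,k)}$ (with $C_\alpha$ the cycle around $v$ based at $\bar h$) are supported on $V_{v,\bullet}$; since the matrix $(\zeta_{v,k}^{\,j})_{j,k}$ is invertible over $\kk$ (the $\zeta_{v,k}$ being distinct and nonzero), these span every rank-one idempotent $E_{hh}^{(v,k)}$, and composing with the shifts $\Phi(\alpha_h)$ then yields all matrix units in each block $\mathrm{M}_{d_v}(\kk)^{(v,k)}$. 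Hence $\Phi$ is onto. Because a formal deformation preserves the underlying vector space (the irreducible paths of $R_\Gamma$ remain a basis), $\dim_\kk B_\Gamma^{(A)}=\dim_\kk B_\Gamma=\sum_{v}\m(v)\mathrm{val}(v)^{2}=\dim_\kk T$ by Proposition \ref{dim}, so a surjection between finite-dimensional spaces of equal dimension is an isomorphism. As $T$ is a product of full matrix algebras over $\kk$ it is semisimple with exactly $N=\sum_v\m(v)$ simple factors, whence $B_\Gamma^{(A)}$ is Morita equivalent to $\kk^{N}$. The main obstacle is precisely the global consistency of the scalars $\zeta_{v,k}$ (and the attendant sign bookkeeping in the deformed relations), together with the treatment of truncated vertices of $V_1$, whose loops are deleted in the presentation of Lemma \ref{iso} and whose blocks degenerate to $\mathrm{M}_1(\kk)=\kk$, for which the verification only simplifies.
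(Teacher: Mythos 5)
Your argument is correct, but it is organized differently from the paper's. The paper works \emph{inside} $B_\Gamma^{(A)}$: it observes that the elements $(-1)^{\theta(\alpha)}C_\alpha^{\m(C_\alpha)}$ are pairwise orthogonal idempotents summing to $1$, uses them to split $B_\Gamma^{(A)}\cong\prod_{v}B_v$ where $B_v=\kk Q_v/I_v$ is a cyclic quiver algebra with the full cycle forced to satisfy $(\text{cycle})^{\m(v)}=(-1)^{\theta(v)}e_i$, and then applies the Chinese Remainder Theorem, $I_v=\bigcap_j I_{v,j}$ with $\kk Q_v/I_{v,j}\cong\mathrm{M}_{\mathrm{val}(v)}(\kk)$, the $\varpi_j$ being the distinct $\m(v)$-th roots of $(-1)^{\theta(v)}$. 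You instead construct an explicit surjection $\Phi$ onto $\prod_v\mathrm{M}_{\mathrm{val}(v)}(\kk)^{\m(v)}$ (equivalently, you exhibit the simple modules, with basis indexed by half-edges and sheets $k=1,\dots,\m(v)$), check the deformed relations — where bipartiteness gives exactly the sign consistency $\zeta_{v,k}^{\m(v)}=\pm\lambda$ on the two classes $V_1,V_2$ — prove surjectivity by a Vandermonde argument on powers of the cycles, and conclude by the dimension count $\dim_\kk B_\Gamma^{(A)}=\dim_\kk B_\Gamma=\sum_v\m(v)\mathrm{val}(v)^2$ from Proposition \ref{dim}. The two proofs hinge on the same mechanism (the $\m(v)$ distinct roots of $\pm\lambda$ splitting each vertex cycle into matrix blocks), but your version buys explicitness — the simple modules are written down, and the role of bipartiteness/orientability appears transparently as the impossibility of consistently propagating the sign around an odd cycle — whereas the paper's idempotent-plus-CRT route avoids verifying surjectivity by hand and stays intrinsic to the algebra. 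Your reliance on $\dim_\kk B_\Gamma^{(A)}=\dim_\kk B_\Gamma$ is legitimate in the paper's framework (the deformed reduction system still satisfies the diamond condition, so the irreducible paths of $R_\Gamma$ remain a $\kk$-basis after setting $t=1$), and in fact the paper invokes the same equality at the end of its own proof.

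Two small points to fix in the write-up. First, the sentence asserting that $\Phi(C_\alpha)$ acts on $V_{v,k}$ as $\zeta_{v,k}\cdot\mathrm{id}$ is not what your construction gives: since $C_\alpha=e_{\bar h}C_\alpha e_{\bar h}$ for its base edge $\bar h$, the operator $\Phi(C_\alpha)$ is $\zeta_{v,k}$ times the rank-one idempotent at the corresponding half-edge basis vector of $V_{v,k}$ — which is exactly what you use (correctly) in the Vandermonde step, so this is only a local misstatement. Second, your signs ($\zeta^{\m(v)}=\lambda$ on $V_1$, $=-\lambda$ on $V_2$, and the relation written as $C_\alpha^{\m(v)}-C_\beta^{\m(w)}-\lambda e_{\bar h}=0$) differ from the normalization in Theorem \ref{thm:infinitesimaldeformation} and Subsection \ref{subsection:typeA}, but since $\lambda\in\kk^*$ is arbitrary this is immaterial; just state one convention and use it consistently, including for the induced type (b) relations $C_\beta^{\m(w)}\beta=\mp\lambda\beta$ and for the degenerate blocks at truncated vertices of $V_1$.
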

	
	\begin{proof}
		For each arrow $\alpha\in Q_1$, define $$\theta(\alpha)= \left\{
		\begin{array}{*{3}{lll}}
			1, & \text{if $\alpha$ is an arrow around some vertex in $V_1$;}\\
			0, & \text{otherwise.}
		\end{array}
		\right.$$ For each $v\in V(\Gamma)$, let $\alpha$ be an arrow around $v$, we also denote $\theta(\alpha)$ by $\theta(v)$.
		We note the following formula: $$((-1)^{\theta(\alpha)}C_\alpha^{\m(C_\alpha)})^2=C_\alpha^{\m(C_\alpha)}(C_\beta^{\m(C_\beta)}+(-1)^{\theta(\alpha)}e)=(-1)^{\theta(\alpha)}C_\alpha^{\m(C_\alpha)},$$  since the relations $C_\alpha^{\m(C_\alpha)}-C_\beta^{\m(C_\beta)}-(-1)^{\theta(\alpha)}e=0$ and $\alpha\beta=0$ hold in $B_\Gamma^{(A)}$ with $e:=t(C_\alpha)$, the idempotent corresponding to the terminus vertex of $C_\alpha$. Therefore, such a $(-1)^{\theta(\alpha)}C_\alpha^{\m(C_\alpha)}$ is an idempotent.

		Consider the set of primitive pairwise orthogonal idempotents $\mathcal{S}=\{(-1)^{\theta(\alpha)}C_\alpha^{\m(C_\alpha)}\;|\;\alpha\in Q_1\}$. Then $\sum_{e\in \mathcal{S}}e=1$.
		
		For each $v\in V(\Gamma)$ with $\mathrm{val}(v)=n$, let $Q_v$ be the following $n$-cycle and $I_v$ be the ideal of $\kk Q_v$ generated by $\{(\alpha_{i-1}\alpha_{i-2}\cdots\alpha_{i-n})^{\m(v)}-(-1)^{\theta(v)}e_i\}_{i\in \mathbb{Z}_n}$.
		$$
		\begin{tikzcd}
			1 \arrow[r, "\alpha_1"] & 2 \arrow[d, no head, dotted]  \\
			0 \arrow[u, "\alpha_0"] & n-1 \arrow[l, "\alpha_{n-1}"]
		\end{tikzcd}$$	
		In fact, the vertices in $Q_v$ correspond to the idempotents $(-1)^{\theta(\alpha)}C_\alpha^{\m(C_\alpha)}$ where $C_\alpha$ are cycles around $v$, and arrows in $Q_v$ correspond to the arrows around $v$ in $Q_\Gamma$. It can be verified that $B_\Gamma^{(A)} \cong \prod_{v \in V(\Gamma)} B_v$ as $\kk$-algebras, where $B_v = \kk Q_v / I_v$. More explicitly, this isomorphism is induced by natural bijections between the idempotents in $\mathcal{S}$ and the vertices $\bigsqcup_{v \in V(\Gamma)} (Q_v)_0$, as well as between the arrows of $Q_\Gamma$ and $\bigsqcup_{v \in V(\Gamma)} (Q_v)_1.$
		
		Since $\kk$ is algebraically closed, denote the solution set of the equation $x^{\m(v)}-(-1)^{\theta(v)}=0$ by $\{\varpi_1,\cdots,\varpi_{\m(v)}\}$ and $I_{v,j}=\langle\alpha_{i-1}\alpha_{i-2}\cdots\alpha_{i-n}-\varpi_je_i\rangle_{i\in \mathbb{Z}_n}$, $1\leq j\leq \m(v)$. Then for each $j$, $\kk Q_v/I_{v,j}\cong \mathrm{M}_n(\kk)$. Actually, $I_v=\bigcap_{j=1}^{\m(v)}I_{v,j}$. By the Chinese Remainder Theorem (c.f. \cite[Corollary 2.27]{Hung}),
		$$B_\Gamma^{(A)}\cong \prod_{v\in V(\Gamma)} B_v\cong\prod_{v\in V(\Gamma)}(\prod_{j=1}^{\m(v)}\kk Q_v/I_{v,j})\cong\prod_{v\in V(\Gamma)}\mathrm{M}_{\mathrm{val}(v)}(\kk)^{\m(v)}.$$
		By Proposition \ref{dim}, it is obvious that $\dim_\kk(B_\Gamma)=\dim_\kk(B_\Gamma^{(A)})$.
	\end{proof}
	
	\begin{corollary}\label{pre der}
		Let $\Gamma$ and $\Gamma'$ be bipartite Brauer graphs (with at least two edges by convention). If $B_\Gamma$ and $B_{\Gamma'}$ are derived equivalent, then $B_\Gamma^{(A)}$ and $B_{\Gamma'}^{(A)}$ are Morita equivalent.
	\end{corollary}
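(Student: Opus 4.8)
The plan is to reduce the assertion to a single numerical equality and then supply that equality from the derived equivalence classification of \cite{OZ}. First I would invoke Proposition~\ref{semi A}: both deformed algebras are semisimple, with $B_\Gamma^{(A)}$ Morita equivalent to $\kk^{N}$ where $N=\sum_{v\in V(\Gamma)}\m(v)$, and $B_{\Gamma'}^{(A)}$ Morita equivalent to $\kk^{N'}$ where $N'=\sum_{v'\in V(\Gamma')}\m(v')$. Since $\kk^{N}$ has exactly $N$ isomorphism classes of simple modules, the algebras $\kk^{N}$ and $\kk^{N'}$ are Morita equivalent precisely when $N=N'$; by transitivity of Morita equivalence the whole statement therefore collapses to the claim that the total multiplicity $\sum_{v}\m(v)$ is a derived invariant of bipartite Brauer graph algebras.

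To obtain this invariance I would use the surface models. By the derived equivalence classification of Opper--Zvonareva \cite{OZ}, a derived equivalence $D^b(B_\Gamma)\simeq D^b(B_{\Gamma'})$ is reflected by an orientation-preserving homeomorphism of the associated graded punctured surfaces which carries punctures to punctures and preserves the multiplicity function $\m$ (recall that the multiplicities sit at the vertices $V$, i.e.\ at the punctures $\mathscr P$). Such a homeomorphism yields a bijection $V(\Gamma)\to V(\Gamma')$ compatible with $\m$, so the multisets $\{\m(v)\}_{v\in V(\Gamma)}$ and $\{\m(v')\}_{v'\in V(\Gamma')}$ agree, and in particular $N=N'$, as required. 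A second, bookkeeping-style route proceeds through the authors' own dimension formula \eqref{align:formalhh}: rewriting it (and using $\mathrm{rank}(\mathrm H^1(\Sigma))=|E|-|V|+1$) as $\sum_{v}\m(v)=\dim_\kk\hh^2(B_\Gamma)-|E|+2|V|-2-2|\partial\Sigma_{-2}|$ exhibits $\sum_v\m(v)$ in terms of $\dim_\kk\hh^2(B_\Gamma)$ and $|E|$, which are transparently derived invariants (a Hochschild cohomology dimension and the number of simple modules), together with the topological counts $|V|$ and $|\partial\Sigma_{-2}|$.

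The hard part is exactly the input on which both routes rest: that the fine combinatorial data of the surface model is preserved under derived equivalence. In the first route this is the preservation of the punctures-with-multiplicities; in the second it is the invariance of the number of punctures $|V|$ and of the number $|\partial\Sigma_{-2}|$ of boundary components of winding number $-2$. Neither can be deduced from the coarse invariants $|E|$ and $\dim_\kk\hh^2$ alone---these would pin down only the combination $\sum_v\m(v)-2|V|+2|\partial\Sigma_{-2}|$---so the essential step is to cite the precise form of the Opper--Zvonareva classification guaranteeing this finer matching. Once that is in hand, the corollary follows immediately from the semisimplicity established in Proposition~\ref{semi A}.
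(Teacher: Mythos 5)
Your proposal is correct and follows essentially the same route as the paper: reduce via Proposition \ref{semi A} to the equality $\sum_{v\in V(\Gamma)}\m(v)=\sum_{v'\in V(\Gamma')}\m(v')$, and then quote the derived invariance of the multiplicity data. The only difference is the source of that invariance --- the paper cites \cite[Theorem 1.2]{AZ}, which states directly that derived equivalent Brauer graph algebras have the same number of vertices and the same multiset of multiplicities, whereas you extract the same fact from the Opper--Zvonareva surface-model classification; both are valid inputs for the final step.
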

	
	\begin{proof}
		By \cite[Theorem 1.2]{AZ}, if $B_\Gamma$ and $B_{\Gamma'}$ are derived equivalent, then the number of vertices and the multi-sets of multiplicities of $\Gamma$ and $\Gamma'$ coincide. By Proposition \ref{semi A},  $B_\Gamma^{(A)}$ and $B_{\Gamma'}^{(A)}$ are Morita equivalent.
	\end{proof}
	
	\medskip
	\noindent\textbf{Conclusion 1.}\;\; Deformations of type (A) make bipartite Brauer graph algebras become semisimple.
	
	\medskip
	
	\begin{remark}
		Let $(\Sigma, \mathscr{P}, \eta_\Gamma)$ be a graded punctured surface with a line field $\eta_\Gamma$ of ribbon type induced by a bipartite ribbon graph $\Gamma$. For any admissible arc system $\mathcal{A}$ on $(\Sigma, \mathscr{P}, \eta_\Gamma)$, the associated Brauer graph $A_\infty$-category $\mathbb{B}(\mathcal{A}, \m)$ is derived equivalent to the (ungraded) Brauer graph algebra $B_\Gamma$. 
		Therefore, a type (A) deformation of $B_\Gamma$ corresponds to a deformation of $\mathbb{B}(\mathcal{A}, \m)$ that only deforms the commutative relations by adding a multiple of the idempotent, in a manner analogous to type (A) deformations. By Proposition \ref{semi A}, this deformation renders the $A_\infty$-category $\mathbb{B}(\mathcal{A}, \m)$ semisimple up to derived equivalence.
	\end{remark}
	
	Note that the line field associated with a bipartite $\Gamma$ is always orientable, as shown in \cite[Lemma 7.9]{OZ}. We conjecture that the true factor influencing the existence of a semisimple deformation is the orientability of the line field (that is, whether the line field is induced by a vector field), no matter whether $\Gamma$ is bipartite. %In fact, it seems that the Brauer graph $A_\infty$-category associated to any full arc system on a graded punctured surface $(\Sigma,\mathscr{P},\eta)$ admits such a semisimple deformation, provided the line field $\eta$ is orientable (but not necessarily of ribbon type). 
	See the following example. We also refer to Examples \ref{example:annulus} and \ref{example:torus} where the line field is not orientable and the algebra does not admit semisimple deformations. 
	
	\begin{example}\label{exa:graded-simple}
		Consider the Brauer graph ($\Gamma$,1), where $\Gamma$ is given by the following arc system. Assume that the winding numbers at the two boundaries are $-2$ and $0$, respectively.
		\begin{center}

			\tikzset{every picture/.style={line width=0.75pt}} %set default line width to 0.75pt        
			
			% [inline block 18: 3 envs, 3362 chars -> data_tex | \begin{tikzpicture}[x=0.75pt,y=0.75pt,yscale=-1,xscale=1] 				%uncomment if require: \path (0,235); %set diagram left st...]
\right).
		\end{align*}  Here, $\mathrm{M}_2(\Bbbk)$ is viewed as a graded matrix algebra with $|e_{12}|=-1$ and $|e_{21}|=1.$ Note that $\Gamma$ is not bipartite and the line field is orientable. 
	\end{example}
	
	\subsection{Deformations of type (C)}\label{subsec:typeC-4.2}
	
	From Theorem \ref{thm:infinitesimaldeformation} it follows that the deformed algebras of type (C) are actually the quantized Brauer graph algebras introduced in \cite{GSS}. 
	
	Fix a spanning tree $T$ of $\Gamma$. For each edge $\bar h \in E(\Gamma)$ which is not in $T$ connecting a vertex $v\in V_1$ to  $w\in V_2$, we assume that $C_{v,\bar h}$ (resp.\ $C_{w,\bar h}$) is the cycle starting from $\bar h$ around $v$ (resp.\ $w$). The generic deformation of type (C) can be chosen by deforming the relation $C_{v,\bar h}^{\m(v)}-C_{w,\bar h}^{\m(w)}=0$ in $I_\Gamma$ into the new relation $C_{v,\bar h}^{\m(v)}+C_{w,\bar h}^{\m(w)} =0$. This deformation can be obtained by deforming the line field $\eta$.  More precisely, consider the cohomology of the ribbon graph $\Gamma$, we have $\dim_\kk \mathrm{H}^1(\Gamma)=|E|-|V|+1$ and the edges which are not in $T$ corresponding to the generators of $\mathrm{H}^1(\Gamma)$ (see for example \cite[Lemma 8.2]{Dew}). Since the ribbon graph $\Gamma$ is a deformation retract of its associated ribbon surface $\Sigma$, these edges will correspond to the non-trivial generators in $\mathrm{H}^1(\Sigma)$, the first cohomology group of $\Sigma$. By \cite[Lemma 1.2]{LP}, these generators can act on the homotopy class of line fields on $\Sigma$ through the short exact sequence 
	\[
	0 \to \mathrm H^1(\Sigma) \to \mathrm H^1(\mathbb P (T\Sigma)) \to \mathrm H^1(S^1) \to 0.
	\]
	This can be understood as deformations of the line fields. Denote by $\eta_{\bar h}$ the line field after the action given by the generator corresponding to the edge $\bar h$ which is not in $T$. Then we have  $$\omega_{\eta_{\bar h}} (\bar h) = \omega_\eta(\bar h) +1.$$  In partiuclar, we obtain that $\omega_{\eta_{\bar h}} (\bar h)$ becomes odd (note that under the original line field $\eta$ the winding number $\omega(\bar h) $ of $\bar h$ is even).  In this way, we obtain the modified relation $C_{v,\bar h}^{\m(v)}-(-1)^{\omega_{\eta_{\bar h}} (\bar h)} C_{w,\bar h}^{\m(w)} = 0$.
	
	%	Actually, this deformation makes each Brauer graph algebra into a modified Brauer graph algebra. Therefore, there is a natural $A_\infty$-category of the deformed algebra given by Definition \ref{BGC}. Therefore, the deformations of type (C) is some deformations of the line field for a given Brauer graph category.
	
	Indeed, if two punctures $p_1,p_2\in\mathscr{P}$ are connected with more than one arc, then there must be some boundaries or genera between them. For example, each boundary can provide $1$-dimensional deformation of line field and each genus can provide $2$-dimensional deformations of line field which are corresponding to generators in $\mathrm{H}^1(\Sigma)$ (see Figure \ref{fig:line-field-def}).

	\begin{figure}[H]
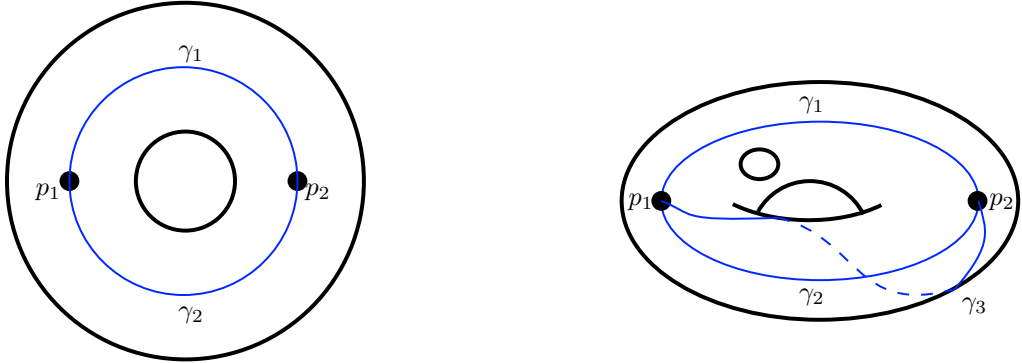

		\centering

		\tikzset{every picture/.style={line width=0.75pt}} %set default line width to 0.75pt        
		
		% [inline block 19: 1 envs, 6226 chars -> data_tex | \begin{tikzpicture}[x=0.75pt,y=0.75pt,yscale=-1,xscale=1] 			%uncomment if require: \path (0,235); %set diagram left sta...]

		\caption{ Boundaries and genera can provide generators in $\mathrm{H}^1(\Sigma)$. }
		\label{fig:line-field-def}
	\end{figure}

	\medskip
	
	\noindent\textbf{Conclusion 2.}\;\; Deformations of type (C) are induced from deformations of line fields.
	
	\subsection{Deformations of type (D)}\label{subsection:typeD}
	
	Let $(\Gamma=\Gamma[V_1,V_2],\m=1)$ be a bipartite Brauer graph. Actually, deformations of type (D) are given by some bigon which contains a boundary (since we only consider the Brauer graph algebra with degree zero, the winding number of this boundary is equal to $-2$) in its interior.
	\begin{figure}[H]
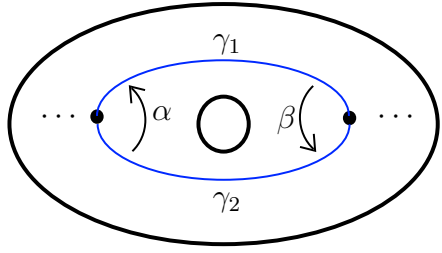

		\centering
		\tikzset{every picture/.style={line width=0.75pt}} %set default line width to 0.75pt        
		
		% [inline block 20: 1 envs, 4431 chars -> data_tex | \begin{tikzpicture}[x=0.75pt,y=0.75pt,yscale=-1,xscale=1] 			%uncomment if require: \path (0,235); %set diagram left sta...]

		\caption{The boundary in the bigon which can induce deformations of type (D).}
		\label{fig:(D)-in-geomrtry}
	\end{figure}
	\noindent Since $\alpha\beta$ and $\beta\alpha$ may not be elements in degree $0$ when the bipartite Brauer graph algebra is graded, if we want to give some specific description of the deformations on Brauer graph categories, we still need to give a specific description of the higher Hochschild cohomology group of bipartite Brauer graph algebras in the future. However, through observation and experience, we will show how these deformations behave differently on surface models of Brauer graph algebras.
	
	\subsubsection{Deformations induced by $\lambda'$}
	
	The generic deformation of type (D), induced by the 2-cocycle corresponding to the coefficient $\lambda'$ (that is, setting $\kappa'=0$ in Theorem \ref{thm:infinitesimaldeformation} (D)), is given by replacing  $\alpha\beta$, $\beta\alpha$ in $I_\Gamma$ by $\alpha\beta-e_1$, $\beta\alpha-e_2$, where $e_i$ is the idempotent corresponding to the arc $\gamma_i$. Actually, $\alpha$, $\beta$ can be regarded as a disc sequence in the surface removing this boundary from $\Sigma_\Gamma$. The relations $\alpha\beta=e_1$, $\beta\alpha=e_2$ induce the following multiplications
	\begin{align*}
	\mu(\alpha,\beta)& =e_1;\\
	\mu(\beta,\alpha) & =e_2;\\
	\mu(p,q) &= \begin{cases}
	\beta^*,& \text{ for $pq=C_\alpha\alpha$;}\\
	\alpha^*, & \text{ for $pq=C_\beta\beta$.}
	\end{cases}
	\end{align*}
%	$$\mu(\alpha,\beta)=e_1;\;\mu(\beta,\alpha)=e_2;$$
%	$$\mu(p,q)=\beta^*,\;\text{ for $pq=C_\alpha\alpha$;}$$
%	$$\mu(p,q)=\alpha^*,\;\text{ for $pq=C_\beta\beta$.}$$
	Therefore, by \cite{HKK} and \cite[Proposition 6.4]{OZ}, this deformation can be regarded as the following operations on the surface model.
	\begin{figure}[H]
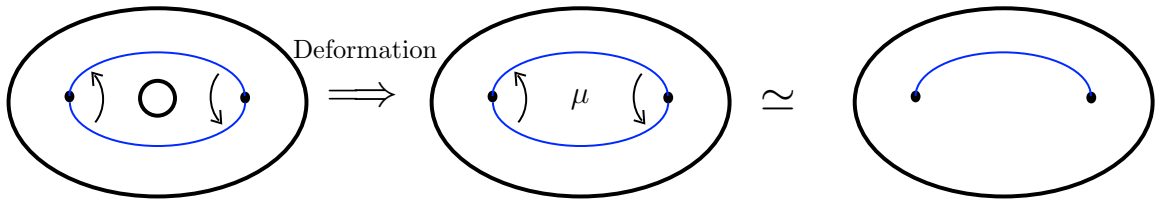

		\centering
		\tikzset{every picture/.style={line width=0.75pt}} %set default line width to 0.75pt        
		
		% [inline block 21: 1 envs, 9036 chars -> data_tex | \begin{tikzpicture}[x=0.75pt,y=0.75pt,yscale=-1,xscale=1] 			%uncomment if require: \path (0,235); %set diagram left sta...]

		\caption{Deformations induced by $\lambda'$ in surface model.}
		\label{fig:lamda-in-2}
	\end{figure}

	\medskip
	In fact, these deformations can be generalized in higher multiplications. Let $a_n,\cdots,a_1$ be a sequence of morphisms given by a polygon with exactly one boundary whose winding number is equal to $-2$ in it. Then  there exists a deformation given by $$\mu(ba_n,\cdots,a_1)=b, \;\text{for $ba_n\neq0$};$$
	$$\mu(a_n,\cdots,a_1b)=(-1)^{|b|}b, \;\text{for $a_1b\neq0$};$$
	$$\mu(a_n,\cdots,a_{r+1},a_r(ba_r)^*,ba_r,a_{r-1},\cdots, a_2)=(-1)^\circ a_1^*,\;\text{for $ba_r\neq 0$,}$$
	where $\circ=|a_1|+|a_2|\cdots+|a_{r-1}|+|ba_r|+\omega_\eta(\delta_2)+\cdots+\omega_\eta(\delta_r)$.
	Therefore, by \cite{HKK} and \cite[Proposition 6.4]{OZ}, this deformation can be regarded as the following operations on the surface model.
	\begin{figure}[H]
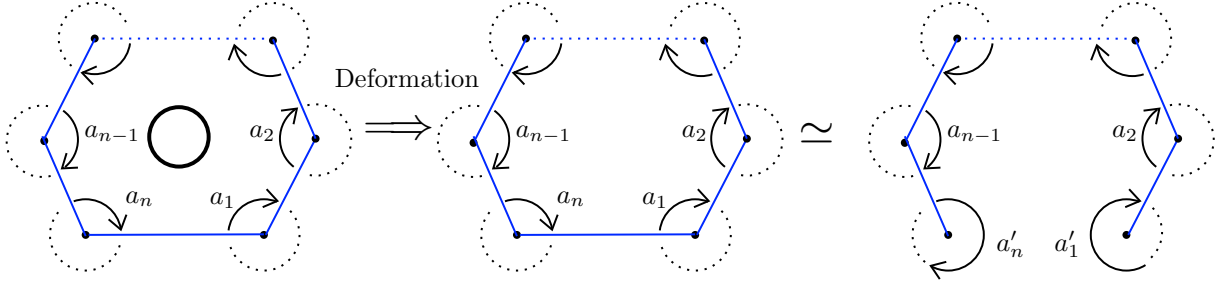

		\centering
		\tikzset{every picture/.style={line width=0.75pt}} %set default line width to 0.75pt        
		
		% [inline block 22: 1 envs, 32118 chars -> data_tex | \begin{tikzpicture}[x=0.75pt,y=0.75pt,yscale=-1,xscale=1] 			%uncomment if require: \path (0,235); %set diagram left sta...]

		\caption{The smooth compactification corresponding to the $A_\infty$-deformations induced by $\lambda'$.}
		\label{fig:lambda-in-higher}
	\end{figure}

	\noindent\textbf{Conclusion 3.}\;\; Deformations of type (D) induced by $\lambda'$ correspond to compactifying the  boundary component of winding number $-2$ into a smooth point.
	
	\subsubsection{Deformations induced by $\kappa'$}

	Without loss of generality, we assume $\alpha$ is an arrow around some vertex in $V_1$. The generic deformation of type (D), induced by the 2-cocycle corresponding to the coefficient $\kappa'$ (that is, setting $\lambda'=0$ in Theorem \ref{thm:infinitesimaldeformation}), is given by replacing  $\beta\alpha$ in $I_\Gamma$ by $\beta\alpha-C_\beta$. Actually, $\alpha$, $\beta$ can be regarded as a disc sequence in the surface removing this boundary from $\Sigma_\Gamma$. The relations $\beta\alpha=C_\beta$, $\alpha\beta=0$ induce the following multiplications
	$$\overset{\times}{\mu}(\alpha,\beta)=0;\;\overset{\times}{\mu}(\beta,\alpha)=C_\beta.$$
	Inspired by \cite{BSW}, after deforming this algebra, we may regard this boundary as a {\it special} point (with singularity). This deformation can be regarded as the following operations on the surface model.
	\begin{figure}[H]
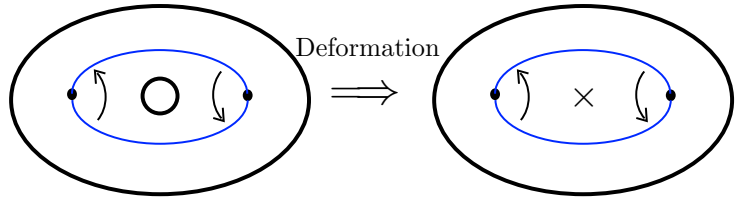

		\centering
		\tikzset{every picture/.style={line width=0.75pt}} %set default line width to 0.75pt        
		
		% [inline block 23: 1 envs, 7170 chars -> data_tex | \begin{tikzpicture}[x=0.75pt,y=0.75pt,yscale=-1,xscale=1] 			%uncomment if require: \path (0,235); %set diagram left sta...]

		\caption{Deformations induced by $\kappa'$ in surface model.}
		\label{fig:kappa-in-2}
	\end{figure}
	
	\noindent\textbf{Conclusion 4.}\;\; Deformations of type (D) induced by $\kappa'$ are deformations given by compactifying the boundary component into a `singular' point.
	
	\medskip
	
	In fact, these deformations can be generalized in some higher multiplications. Let $a_n,\cdots,a_1$ be a sequence of morphisms given by a polygon with exactly one boundary whose winding numbers are equal to $-2$ in it. If we fix a morphism $a_n$ in each sequence in the polygon as above, then we conjecture that there exists a deformation given by 
	\begin{align*}
		\overset{\times}{\mu}(a_n,\cdots,a_1)&=C_{a_n};\\
		\overset{\times}{\mu}(a_i,\cdots,a_1,a_n,\cdots,a_{i+1})&=0, 
	\end{align*}
	with $i=1,\cdots,n-1$. This may lead to deformations induced by singular points (similar in spirit to, but distinct from, the boundary deformations considered in \cite{BSW}, which focus on the case of winding number $-1$). These deformations can be regarded as the following operations on the surface model.
	\begin{figure}[H]
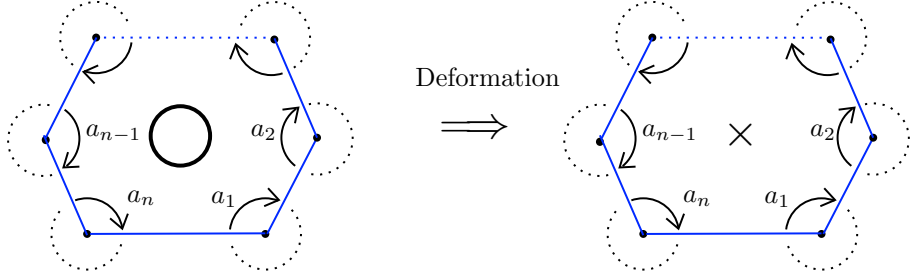

		\centering
		\tikzset{every picture/.style={line width=0.75pt}} %set default line width to 0.75pt        
		
		% [inline block 24: 1 envs, 21558 chars -> data_tex | \begin{tikzpicture}[x=0.75pt,y=0.75pt,yscale=-1,xscale=1] 			%uncomment if require: \path (0,235); %set diagram left sta...]

		\caption{Conjecture on deformations induced by $\kappa'$ for each arbitrary morphism sequence.}
		\label{fig:kappa-in-higher}
	\end{figure}

	We should note that for some simple examples, the $A_\infty$-deformation induced by $\kappa'$ corresponds to replacing a puncture with a marked point on the boundary in the surface model.

	\begin{example}\label{exa:BGA-deform-gentle} (Example \ref{exa:graded-simple} revisited)
		Consider the $A_\infty$-deformation of $B_\Gamma$ induced by the derivation $d\in \mathrm{HH}^2(B_\Gamma)$ defined by $d(x)=xy$ (see for example\ in \cite{CSS,LX}). That is, we obtain a dg algebra $(B_\Gamma, d)$, where  $d$ is indeed of degree $1$ since $B_\Gamma$ is graded. Note that this dg algebra is quasi-isomorphic to the graded gentle algebra $k[y]/\langle y^2\rangle$, where $|y|=1$.
	\end{example}

	\section{Examples for non-bipartite Brauer graph algebras}\label{sec:5}
	
	In this section, we give some examples of deformations for non-bipartite Brauer graph algebras. Indeed, for non-bipartite graphs, there is no general method to determine which cycle should be the tip in the commutative relation $C_\alpha^{\m(C_\alpha)} - C_\beta^{\m(C_\beta)} = 0$ (due to the absence of the vertex partition in bipartite case). It is precisely this obstacle—the indeterminacy of the tip in the reduction system—that prevents us from providing a general method to compute. However, by checking some simple examples, we find that there are some different phenomena appeared in this case.
	
	\begin{example}\label{example:annulus}
		Consider the Brauer graph ($\Gamma$,1), where $\Gamma$ is given by the following arc system.
		\begin{center}

			\tikzset{every picture/.style={line width=0.75pt}} %set default line width to 0.75pt        
			
			% [inline block 25: 1 envs, 2654 chars -> data_tex | \begin{tikzpicture}[x=0.75pt,y=0.75pt,yscale=-1,xscale=1] 				%uncomment if require: \path (0,235); %set diagram left st...]

			
		\end{center}
		The corresponding Brauer graph algebra is given by $B_\Gamma=\kk\langle x,y\rangle/\langle xy-yx,x^2,y^2\rangle$. Note that $B_\Gamma$ admits a reduction system
		\[
		R_\Gamma = \{(xy, yx), (x^2, 0), (y^2, 0)\}
		\]
		satisfying the condition ($\diamond$). So $S = \{xy, x^2, y^2\}.$
		
		Then the general form of the $2$-cocycles in $\ho(S, B_\Gamma)$ is given by
		\begin{align*}
			\tphi_{xy}& =\kappa yx;\\
			\tphi_{x^2} & =\mu_1+\mu_2 x+\mu_3 y+\mu_4 yx;\\
			\tphi_{y^2} &=\nu_1+\nu_2 x+\nu_3 y+\nu_4 yx.
		\end{align*}
		Similar to the computation in Appendix \ref{appendix:coboundary}, $\tphi$ is a $2$-coboundary if and only if 
		\[
		\kappa=\mu_1=\mu_3=\nu_1=\nu_2 = 0.
		\]
		%$$\kappa'-\kappa=\mu_1'-\mu_1=\mu_3'-\mu_3=\nu_1'-\nu_1=\nu_2'-\nu_2=0;$$
		%$$\mu_2'-\mu_2=2a_1;\; \mu_4'-\mu_4=2a_3;\;\nu_3'-\nu_3=2b_1;\;\nu_4'-\nu_4=2b_2.$$
		
		Therefore, $\dim_\kk\hh^2(B_\Gamma)=5$, yielding a family of deformations parameterized by coefficients $\kappa,\mu_1,\mu_3,\nu_1,\nu_2$.

		Actually, the deformation parametrized by $\kappa$ can also be regarded as a deformation of the line field, which is induced by a generator in $\mathrm H^1(\Sigma)$ of the surface model $\Sigma$ (corresponding to the deformation of type (C) in the bipartite case). The deformations parameterized by $\mu_1, \mu_3, \nu_1, \nu_2$ are deformations from the two boundaries in the surface (analogous to the deformations of type (D) in the bipartite case). Note that there are no deformations of type (B) since $\Gamma$ is multiplicity-free. However, deformations of type (A) do not exist either in this example.
	\end{example}

	\begin{example}\label{example:torus}
		Consider the Brauer graph ($\Gamma$,1), where $\Gamma$ is given by the following arc system.
		\begin{center}

			\tikzset{every picture/.style={line width=0.75pt}} %set default line width to 0.75pt        
			
			% [inline block 26: 1 envs, 8333 chars -> data_tex | \begin{tikzpicture}[x=0.75pt,y=0.75pt,yscale=-1,xscale=1] 				%uncomment if require: \path (0,235); %set diagram left st...]

			
		\end{center}
		For convenience, we write for example $\alpha_{4321}:=\alpha_{4}\alpha_{3}\alpha_{2}\alpha_{1}$. The corresponding Brauer graph algebra is given by $B_\Gamma=\kk Q/\langle \alpha_{4321}-\alpha_{2143},\alpha_{1432}-\alpha_{3214},\alpha_{41},\alpha_{34},\alpha_{23},\alpha_{12}\rangle$ where the quiver $Q$ is given by
		$$
		\begin{tikzcd}
			1 \arrow[r, "\alpha_1", bend left, shift left=3] \arrow[r, "\alpha_3"', shift right] & 2 \arrow[l, "\alpha_2"', shift right] \arrow[l, "\alpha_4", bend left, shift left=3]
		\end{tikzcd}$$
		We obtain a reduction system of $I$ given by 
		$$R_\Gamma=\{(\alpha_{4321},\alpha_{2143}),(\alpha_{1432},\alpha_{3214}),(\alpha_{32143},0),(\alpha_{41},0),(\alpha_{12},0),(\alpha_{23},0),(\alpha_{34},0)\}.$$
		Note that $\alpha_{32143}=0$ is a redundant relation. 
		By computation, the $2$-cocycles of $B_\Gamma$ are given by
		$$\tphi_{\alpha_{4321}}=\kappa_1 \alpha_{2143};\;\tphi_{\alpha_{1432}}=\kappa_2\alpha_{3214};\;\tphi_{\alpha_{32143}}=0;$$
		$$\tphi_{\alpha_{41}}=\mu_1\alpha_{21}+\mu_2 \alpha_{43}+\kappa_3\alpha_{2143};$$
		$$\tphi_{\alpha_{34}}=\mu_1\alpha_{32}+\mu_3 \alpha_{14}+\kappa_4\alpha_{3214};$$
		$$\tphi_{\alpha_{23}}=\mu_3\alpha_{21}+\mu_4 \alpha_{43}+\kappa_5\alpha_{2143};$$
		$$\tphi_{\alpha_{12}}=\mu_2\alpha_{32}+\mu_4 \alpha_{14}+\kappa_6\alpha_{3214}.$$
		It is a $2$-coboundary if and only if 
		\[
		\kappa_1=\kappa_2=\cdots=\kappa_6= 0.
		\]
		%Let $\psi(\alpha_{1})=k_1\alpha_{1}+l_1\alpha_{3}$, %$\psi(\alpha_{2})=k_2\alpha_{2}+l_2\alpha_{4}$, %$\psi(\alpha_{3})=k_3\alpha_{3}+l_3\alpha_{1}$, %$\psi(\alpha_{4})=k_4\alpha_{4}+l_4\alpha_{2}$. It is a $2$-coboundary of $B_\Gamma$ are %given by
		%	$$\kappa'_i-\kappa_i=0,\; 1\leq i\leq 6;$$
		%	$$\mu_1'-\mu_1=l_4;\; \mu_2'-\mu_2=l_1;\;\mu_3'-\mu_3=l_3;\;\mu_4'-\mu_4=l_2.$$
		Therefore, $\dim_\kk\hh^2(B_\Gamma)=6$, yielding a family of deformations parameterized by coefficients $\kappa_1,\cdots,\kappa_6$.

		Note that there are no deformations of type (B) since $\Gamma$ is multiplicity-free. In fact, the deformations parametrized by $\kappa_1$ and $\kappa_2$ can also be regarded as deformations of the line fields, which are induced by two generators in $\mathrm H^1(\Sigma)$ of the surface model $\Sigma$ (corresponding to the deformation of type (C) in the bipartite case). The deformations  parametrized by $\kappa_3,\cdots,\kappa_6$ are deformations of the boundary in this surface (corresponding to the deformation of type (D)). However, this boundary provides $4$-dimensional deformations in this example.   And different from the bipartite case, the deformations of type (A) do not exist in this example.
	\end{example}

		\begin{example}\label{example:annuli-with-2-punctures}
		Consider the Brauer graph ($\Gamma$,1), where $\Gamma$ is given by the following arc system in the annulus with two punctures. Note that $\Gamma$ is not bipartite. 
		\medskip
		\begin{center}

\tikzset{every picture/.style={line width=0.75pt}} %set default line width to 0.75pt        

% [inline block 27: 1 envs, 3677 chars -> data_tex | \begin{tikzpicture}[x=0.75pt,y=0.75pt,yscale=-1,xscale=1] %uncomment if require: \path (0,235); %set diagram left start ...]


		\end{center}
		For convenience, we write for example $\alpha_{321}:=\alpha_{3}\alpha_{2}\alpha_{1}$. The corresponding Brauer graph algebra is given by $B_\Gamma=\kk Q/\langle \alpha_{321}-\alpha_{213},\alpha_{2132},\alpha_{3}^2,\alpha_{12}\rangle$ where the quiver $Q$ is given by
		$$
\begin{tikzcd}
1 \arrow[r, "\alpha_1", shift left] \arrow["\alpha_3"', loop, distance=2em, in=215, out=145] & 2 \arrow[l, "\alpha_2", shift left]
\end{tikzcd}$$
	We choose a line field $\eta$ so that $B_\Gamma$ is concentrated in degree zero. 	We obtain a reduction system of $I$ given by 
		$$R_\Gamma=\{(\alpha_{321},\alpha_{213}),(\alpha_{2132},0),(\alpha_{3}^2,0),(\alpha_{12},0)\}.$$
		Note that $\alpha_{2132}=0$ is a redundant relation. 
		By computation, the $2$-cocycles of $B_\Gamma$ are given by

			\begin{align*}
			\tphi_{\alpha_{321}}& =\kappa \alpha_{213};\\
			\tphi_{\alpha_{2132}} & =0;\\
			\tphi_{\alpha_{3}^2} & =\lambda e_1+\mu \alpha_3+\nu \alpha_{21}+\kappa' \alpha_{213};\\
			\tphi_{\alpha_{12}} &=\xi \alpha_{132}.
		\end{align*}
		It is a $2$-coboundary if and only if 
		\[
		\kappa=\lambda=\nu= 0.
		\]
		%Let $\psi(\alpha_{1})=k_1\alpha_{1}+l_1\alpha_{3}$, %$\psi(\alpha_{2})=k_2\alpha_{2}+l_2\alpha_{4}$, %$\psi(\alpha_{3})=k_3\alpha_{3}+l_3\alpha_{1}$, %$\psi(\alpha_{4})=k_4\alpha_{4}+l_4\alpha_{2}$. It is a $2$-coboundary of $B_\Gamma$ are %given by
		%	$$\kappa'_i-\kappa_i=0,\; 1\leq i\leq 6;$$
		%	$$\mu_1'-\mu_1=l_4;\; \mu_2'-\mu_2=l_1;\;\mu_3'-\mu_3=l_3;\;\mu_4'-\mu_4=l_2.$$
		Therefore, $\dim_\kk\hh^2(B_\Gamma)=3$, yielding a family of deformations parameterized by coefficients $\kappa,\lambda,\nu$.

		Different from the bipartite case, $B_\Gamma$ does not admit a semisimple deformation (corresponding to type (A) in the bipartite case). Note that there are no deformations of type (B) since $\Gamma$ is multiplicity-free. The family of deformations parametrized by $\kappa$  can be regarded as deformations of the line fields, which are induced by two generators in $\mathrm H^1(\Sigma)$ of the surface model $\Sigma$ (corresponding to  type (C) in the bipartite case).    The deformations  parametrized by $\lambda$ and $\nu$ come from the boundary of winding number $-1$. 
		
		%For instance, consider the deformed algebra $B_{\Gamma}' = \Bbbk Q/\langle \alpha_{321}-\alpha_{213},\alpha_{2132},\alpha_{3}^2-e_1,\alpha_{12}\rangle$ by taking $\lambda = 1$ and $\kappa = \nu = 0$. 
		
		Inspired by \cite{BSW1, BSW}, we expect that a certain deformation exists. This deformation can be described by the orbifold surface obtained from the original surface through the collapse of a boundary component with winding number $-1$ into an orbifold point, a direction we plan to explore in future work. Alternatively, this deformation can also be understood as an $A_\infty$-deformation of the following graded (modified) Brauer graph algebra, which provides an intuitive description via the framework of dg algebras.
		
		On the same surface model, we consider the following admissible arc system $\Delta$. 
		\begin{center}

\tikzset{every picture/.style={line width=0.75pt}} %set default line width to 0.75pt        

% [inline block 28: 1 envs, 4591 chars -> data_tex | \begin{tikzpicture}[x=0.75pt,y=0.75pt,yscale=-1,xscale=1] %uncomment if require: \path (0,235); %set diagram left start ...]


		\end{center}
		By \cite[Corollary 6.5]{OZ}, the ungraded algebra $B_\Gamma$ is derived equivalent to the graded (modified) Brauer graph algebra $B_\Delta$ corresponding to the arc system $\Delta$. To be more specific, $$B_\Delta=\kk Q/\langle \alpha_{12}+\beta_{12},\alpha_{21}-\beta_{21},\beta_1\alpha_2,\alpha_2\beta_1,\alpha_1\beta_2,\beta_2\alpha_1\rangle$$ where the quiver $Q$ is given by
		$$
		\begin{tikzcd}
			1 \arrow[r, "\alpha_1", bend left, shift left=3] \arrow[r, "\beta_1"', shift right] & 2 \arrow[l, "\alpha_2"', shift right] \arrow[l, "\beta_2", bend left, shift left=3]
		\end{tikzcd}$$
		Note that here $|\alpha_1|=-|\alpha_2|=1$ and $|\beta_1|=|\beta_2|=0$, since the winding numbers at the two boundaries are $-3$ and $-1$.

			Consider the $A_\infty$-deformation induced by the derivation $d \in \mathrm{HH}^2(B_\Delta)$ defined by $d(\alpha_2) = \beta_2$ and $d(\beta_1) = \alpha_1$ (see, for example, in \cite{CSS,LX}). Here, $B_\Delta$ is graded so that $d$ is of degree $1$. Under this deformation, $B_\Delta$ becomes a dg algebra that is quasi-isomorphic to the ungraded algebra $$k[\alpha_{12}]/\langle \alpha_{12}^2\rangle \times k[\alpha_{21}]/\langle \alpha_{21}^2\rangle \cong k[x]/\langle x^2\rangle \times k[x]/\langle x^2\rangle,$$ which is the trivial extension of the semisimple skew-gentle algebra $k[x]/\langle x^2 - x \rangle \cong k \times k$, which can be viewed as a skew Brauer graph algebra in \cite{EGV,Soto}.

            \begin{remark}
	We may choose another line field $\eta'$ so that the Brauer graph algebra, denoted by $B_{\Delta}'$, corresponding to $\Delta$ is concentrated in degree zero. In this case the winding number of the two boundary components both equal $-2$. Then by Corollary \ref{dimhh2} we obtain $\hh^2(B_\Delta')=6$, which is different from the graded case.
		%$$3=\hh^2(B_\Delta)=\hh^2(B_\Gamma)\neq \hh^2(B_\Delta')=6.$$
		\end{remark}

	\end{example}
	
	%\section*{Appendix}
	\appendix
	
	\section{$2$-Cocycles}\label{appendix:2-cocycle}

By \cite[Section 7.A]{BW} (see also the interpretation in the proof of Theorem \ref{thm:infinitesimaldeformation}), a cochain $\tphi$ is a $2$-cocycle if and only if the following condition holds for every path $uvw \in S_3$ such that $uv, vw \in S$:
	$$(\pi(u)\star\pi(v))\star\pi(w)=\pi(u)\star(\pi(v)\star\pi(w)) \;\; \text{mod $t^2$}.$$
	Here, $\pi: \kk Q \twoheadrightarrow \kk Q/I$ is the natural projection onto the space of irreducible paths $\irr_S$,  and the operation $\star = \star^C_{\varphi+\tphi t}$ denotes multiplication in the deformed algebra $\Bbbk Q[[t]] \big/ \langle s-\varphi_s-\tphi_s t\rangle^{\hat{}}_{s\in S}$. This is performed by multiplying paths in $\kk Q[[t]]$ and then reducing the result to an irreducible path using the reduction system $\widehat{R}_{\varphi+\tphi t}$. This reduction process means that for any path $p$ containing a subpath $s$ (i.e., $p = qsr$), we replace the occurrence of $s$ with $q(\varphi_s + \tphi_st)r$. The reduction process continues iteratively until the result is irreducible.
	
	Let $\gamma=\alpha\sigma(\alpha)\cdots\sigma^i(\alpha)$ be a non-trivial path in $B_\Gamma$. We will denote by $\gamma^*$ the path of $B_\Gamma$ which complements $\gamma$ to a maximal non-zero path, that is $\gamma\gamma^*=C_\alpha^{\m(C_\alpha)}$. Now we discuss all the $1$-ambiguities one by one, by considering the different 
forms of $\tilde{\varphi}_s$ appearing in the general form of the $2$-cochains defined in Section~\ref{sec:differential}, corresponding to type~(1)–(12) elements in $\operatorname{Hom}_{\Bbbk (Q')_0^e}(\Bbbk S, B_\Gamma)$.

	\begin{enumerate}[\textit{Case} 1.]
		\item Let $v\in V_1$, $\m(v)=m$ and $s(\iota(h_1))\neq s(\iota(h_2))$. Then consider the $1$-ambiguity $\alpha\alpha^*\alpha=\alpha C_{\alpha'}^m=C_\alpha^m\alpha$ with $\alpha'=\sigma(\alpha)$, $s_1=C_\alpha^m$, $s_2=C_{\alpha'}^m$. In this case, $\tphi_{s_1}$ and $\tphi_{s_2}$ have forms in (1).
		\begin{center}

			\tikzset{every picture/.style={line width=0.75pt}} %set default line width to 0.75pt        
			
			% [inline block 29: 1 envs, 2958 chars -> data_tex | \begin{tikzpicture}[x=0.75pt,y=0.75pt,yscale=-1,xscale=1] 				%uncomment if require: \path (0,235); %set diagram left st...]

			
		\end{center}
		Then $$(\pi(\alpha)\star\pi(\alpha^*))\star\pi(\alpha)=\lambda^{(s_1)}\alpha t+\sum_{i=1}^{m-1}\mu_i^{(s_1)}C_\alpha^i\alpha t+o(t^2);$$
		$$\pi(\alpha)\star(\pi(\alpha^*)\star\pi(\alpha))=\lambda^{(s_2)}\alpha t+\sum_{i=1}^{m-1}\mu_i^{(s_2)}\alpha C_{\alpha'}^i t+o(t^2).$$
		Actually, $C_\alpha^i\alpha=\alpha C_{\alpha'}^i$ in $\kk Q'_\Gamma$. Thus we have
		$$\lambda^{(s_1)}=\lambda^{(s_2)},\;\mu_i^{(s_1)}=\mu_i^{(s_2)}\;(1\leq i\leq m-1).$$ 
		
		\item Let $v\in V_1$, $\m(v)=m$, $s(\iota(h_1))= s(\iota(h_2))$ but $p_\beta\notin (Q'_\Gamma)_1$. Then consider the $1$-ambiguity $\alpha\alpha^*\alpha=\alpha C_{\alpha'}^m=C_\alpha^m\alpha$ with $\alpha'=\sigma(\alpha)$, $s_1=C_\alpha^m$, $s_2=C_{\alpha'}^m$. In this case, $\tphi_{s_1}$ and $\tphi_{s_2}$ have forms in (1).
		\begin{center}

			\tikzset{every picture/.style={line width=0.75pt}} %set default line width to 0.75pt        
			
			% [inline block 30: 1 envs, 5410 chars -> data_tex | \begin{tikzpicture}[x=0.75pt,y=0.75pt,yscale=-1,xscale=1] 				%uncomment if require: \path (0,235); %set diagram left st...]

			
		\end{center}
		Then $$(\pi(\alpha)\star\pi(\alpha^*))\star\pi(\alpha)=\lambda^{(s_1)}\alpha t+\sum_{i=1}^{m-1}\mu_i^{(s_1)}C_\alpha^i\alpha t+o(t^2);$$
		$$\pi(\alpha)\star(\pi(\alpha^*)\star\pi(\alpha))=\lambda^{(s_2)}\alpha t+\sum_{i=1}^{m-1}\mu_i^{(s_2)}\alpha C_{\alpha'}^i t+o(t^2).$$
		Actually, $C_\alpha^i\alpha=\alpha C_{\alpha'}^i$ in $\kk Q'_\Gamma$. Thus we have
		$$\lambda^{(s_1)}=\lambda^{(s_2)},\;\mu_i^{(s_1)}=\mu_i^{(s_2)}\;(1\leq i\leq m-1).$$
		
		\item Let $v\in V_1$, $\m(v)=m$, and $\beta\in (Q'_\Gamma)_1$. Then consider the $1$-ambiguity $\alpha\alpha^*\alpha=\alpha C_{\alpha'}^m=C_\alpha^m\alpha$ with $\alpha'=\sigma(\alpha)$, $s_1=C_\alpha^m$, $s_2=C_{\alpha'}^m$. In this case, $\tphi_{s_1}$ and $\tphi_{s_2}$ have forms in (1).
		\begin{center}

			\tikzset{every picture/.style={line width=0.75pt}} %set default line width to 0.75pt        
			
			% [inline block 31: 1 envs, 6356 chars -> data_tex | \begin{tikzpicture}[x=0.75pt,y=0.75pt,yscale=-1,xscale=1] 				%uncomment if require: \path (0,235); %set diagram left st...]

			
		\end{center}
		Then $$(\pi(\alpha)\star\pi(\alpha^*))\star\pi(\alpha)=\lambda^{(\beta\alpha)}\beta^*t+\lambda^{(s_1)}\alpha t+\sum_{i=1}^{m-1}\mu_i^{(s_1)}C_\alpha^i\alpha t+o(t^2);$$
		$$\pi(\alpha)\star(\pi(\alpha^*)\star\pi(\alpha))=\lambda^{(\alpha\beta)}\beta^*t+\lambda^{(s_2)}\alpha t+\sum_{i=1}^{m-1}\mu_i^{(s_2)}\alpha C_{\alpha'}^i t+o(t^2).$$
		Actually, $C_\alpha^i\alpha=\alpha C_{\alpha'}^i$ in $\kk Q'_\Gamma$. Thus we have
		$$\lambda^{(\beta\alpha)}=\lambda^{(\alpha\beta)},\;\lambda^{(s_1)}=\lambda^{(s_2)},\;\mu_i^{(s_1)}=\mu_i^{(s_2)}\;(1\leq i\leq m-1).$$
		
		\item Let $w\in V_2$, $\m(w)=n$, $h_1\neq h_2$. Then consider the $1$-ambiguity $\beta'C_\beta^n\beta$ with $\beta'=\sigma^{-1}(\beta)$, $s_1=\beta'C_\beta^n$, $s_2=C_\beta^n\beta$. In this case, $\tphi_{s_1}$, $\tphi_{s_2}$ have forms either in (2) or in (3).
		
		\begin{center}

			\tikzset{every picture/.style={line width=0.75pt}} %set default line width to 0.75pt        
			
			% [inline block 32: 1 envs, 2968 chars -> data_tex | \begin{tikzpicture}[x=0.75pt,y=0.75pt,yscale=-1,xscale=1] 				%uncomment if require: \path (0,235); %set diagram left st...]
	
		\end{center}
		Then $$(\pi(\beta')\star\pi(C_\beta^n))\star\pi(\beta)=\lambda^{(s_1)}\beta'\beta t+\sum_{j=1}^{n-1}\varepsilon_j^{(s_1)}\beta'C_\beta^i\beta t+o(t^2);$$
		$$\pi(\beta')\star(\pi(C_\beta^n)\star\pi(\beta))=\lambda^{(s_2)}\beta'\beta t+\sum_{j=1}^{n-1}\varepsilon_j^{(s_2)}\beta'C_\beta^i\beta t+o(t^2).$$
		Thus we have
		$$\lambda^{(s_1)}=\lambda^{(s_2)},\;\varepsilon_j^{(s_1)}=\varepsilon_j^{(s_2)}\;(1\leq j\leq n-1).$$
		
		\item Let $\m(v)=m$, $\m(w)=n$, $h_1\neq h_2$, $s(\iota(h_1))=s(\iota(h_2))=v$. Denote the cycle starting from $\iota(h_2)$ by $C_{\sigma(\alpha)}$ and the cycle starting from $h_2$ by $C_\beta$. Then consider the $1$-ambiguity $\alpha\beta\alpha$, $s_1=\alpha\beta$, $s_2=\beta\alpha$. In this case, $\tphi_{s_1}$, $\tphi_{s_2}$ have forms both in (6) (or both in (10)).
		\begin{center}

			\tikzset{every picture/.style={line width=0.75pt}} %set default line width to 0.75pt        
			
			% [inline block 33: 1 envs, 6356 chars -> data_tex | \begin{tikzpicture}[x=0.75pt,y=0.75pt,yscale=-1,xscale=1] 				%uncomment if require: \path (0,235); %set diagram left st...]

			
		\end{center}
		Then $$(\pi(\alpha)\star\pi(\beta))\star\pi(\alpha)=\lambda^{(s_1)}\alpha t+\sum_{i=1}^{m-1}\xi_i^{(s_1)}C_\alpha^i\alpha t+o(t^2);$$
		$$\pi(\alpha)\star(\pi(\beta)\star\pi(\alpha))=\lambda^{(s_2)}\alpha t+\sum_{i=1}^{m-1}\xi_i^{(s_2)}C_\alpha^i\alpha t+o(t^2).$$
		Thus we have
		$$\lambda^{(s_1)}=\lambda^{(s_2)},\;\xi_i^{(s_1)}=\xi_i^{(s_2)}\;(1\leq i\leq m-1).$$
		Moreover, if we consider the $1$-ambiguity $\beta\alpha\beta$, we have the following formulas:
		$$\lambda^{(s_1)}=\lambda^{(s_2)},\;\zeta_j^{(s_1)}=\zeta_j^{(s_2)}\;(1\leq j\leq n-1).$$
		
		\item Let $\m(v)=m$, $\m(w)=n$. The $1$-ambiguity $\alpha''\beta\alpha$, $s_1=\alpha''\beta$, $s_2=\beta\alpha$. In this case, $\tphi_{s_1}$, $\tphi_{s_2}$ have forms in (7), (8), (9).
		
		\begin{center}

			\tikzset{every picture/.style={line width=0.75pt}} %set default line width to 0.75pt        
			
			% [inline block 34: 1 envs, 5936 chars -> data_tex | \begin{tikzpicture}[x=0.75pt,y=0.75pt,yscale=-1,xscale=1] 				%uncomment if require: \path (0,235); %set diagram left st...]

			
		\end{center}
		Then $$(\pi(\alpha'')\star\pi(\beta))\star\pi(\alpha)=\sum_{i=0}^{m-1}\xi_i^{(s_1)}C_{\alpha''}^i\alpha''p_{\alpha'}\alpha+o(t^2);$$
		$$\pi(\alpha'')\star(\pi(\beta)\star\pi(\alpha))=\sum_{i=0}^{m-1}\xi_i^{(s_2)}C_{\alpha''}^i\alpha''p_{\alpha'}\alpha+o(t^2).$$
		Thus we have
		$$\xi_i^{(s_1)}=\xi_i^{(s_2)}\;(0\leq i\leq m-1).$$
		
		\item Let $\m(v)=m$, $\m(w)=n$, $\mathrm{val}(w)=1$. The $1$-ambiguity $\alpha'\beta\alpha$, $s_1=\alpha'\beta$, $s_2=\beta\alpha$. In this case, $\tphi_{s_1}$, $\tphi_{s_2}$ have forms in (11) and (12).
		\begin{center}

			\tikzset{every picture/.style={line width=0.75pt}} %set default line width to 0.75pt        
			
			% [inline block 35: 1 envs, 4368 chars -> data_tex | \begin{tikzpicture}[x=0.75pt,y=0.75pt,yscale=-1,xscale=1] 				%uncomment if require: \path (0,235); %set diagram left st...]


		\end{center}
		Then $$(\pi(\alpha')\star\pi(\beta))\star\pi(\alpha)=\sum_{i=0}^{m-1}\xi_i^{(s_1)}C_{\alpha'}^i\alpha'\alpha+o(t^2);$$
		$$\pi(\alpha')\star(\pi(\beta)\star\pi(\alpha))=\sum_{i=0}^{m-1}\xi_i^{(s_2)}C_{\alpha'}^i\alpha'\alpha+o(t^2).$$
		Thus we have
		$$\xi_i^{(s_1)}=\xi_i^{(s_2)}\;(0\leq i\leq m-1).$$
		
		\item  Let $v\in V_1$, $w\in V_2$, $\m(v)=m$, $\m(w)=n$, $s(\iota(h_2))\neq v$. The $1$-ambiguity $\beta\alpha\alpha^*=\beta C_\alpha^m$, $s_1=\beta\alpha$, $s_2=C_\alpha^m$. In this case, $\tphi_{s_1}$, $\tphi_{s_2}$ have forms in (5) and (1).    
		
		\begin{center}
			
			\tikzset{every picture/.style={line width=0.75pt}} %set default line width to 0.75pt        
			
			% [inline block 36: 1 envs, 5335 chars -> data_tex | \begin{tikzpicture}[x=0.75pt,y=0.75pt,yscale=-1,xscale=1] 				%uncomment if require: \path (0,235); %set diagram left st...]

			
		\end{center}
		Then 
		\begin{align*}(\pi(\beta)\star\pi(\alpha))\star\pi(\alpha^*)&=o(t^2);\\\pi(\beta)\star(\pi(\alpha)\star\pi(\alpha^*))&=\lambda^{(C_\beta^n\beta)}\beta t+\sum_{j=1}^{n-1}\varepsilon_j^{(C_\beta^n\beta)}C_\beta^j\beta t+\lambda^{(s_2)}\beta t+\sum_{j=1}^{n-1}\varepsilon_j^{(s_2)}C_\beta^j\beta t+o(t^2).            \end{align*}
		Thus we have
		$$\lambda^{(C_\beta^n\beta)}=-\lambda^{(s_2)},\;\varepsilon_j^{(C_\beta^n\beta)}=-\varepsilon_j^{(s_2)}\;(1\leq j\leq n-1).$$
		
		\item  Let $v\in V_1$, $w\in V_2$, $\m(v)=m$, $\m(w)=n$, $s(\iota(h_2))= v$. The $1$-ambiguity $\beta\alpha\alpha^*=\beta C_\alpha^m$, $s_1=\beta\alpha$, $s_2=C_\alpha^m$. In this case, $\tphi_{s_1}$, $\tphi_{s_2}$ have forms in (6) and (1).    
		\begin{center}

			\tikzset{every picture/.style={line width=0.75pt}} %set default line width to 0.75pt        
			
			% [inline block 37: 1 envs, 6356 chars -> data_tex | \begin{tikzpicture}[x=0.75pt,y=0.75pt,yscale=-1,xscale=1] 				%uncomment if require: \path (0,235); %set diagram left st...]

			
		\end{center}
		Then 
		\begin{align*}(\pi(\beta)\star\pi(\alpha))\star\pi(\alpha^*)&=\lambda^{(\beta\alpha)}\alpha^*C_\alpha^{m-1} t+o(t^2);\\
			\pi(\beta)\star(\pi(\alpha)\star\pi(\alpha^*))&=\lambda^{(C_\beta^n\beta)}\beta t+\sum_{j=1}^{n-1}\varepsilon_j^{(C_\beta^n\beta)}C_\beta^j\beta t+\sum_{i=0}^{m-1}\nu_i^{(C_\beta^n\beta)}\alpha^*C_\alpha^{i}t\\&\;\;+\lambda^{(s_2)}\beta t+\sum_{j=1}^{n-1}\varepsilon_j^{(s_2)}C_\beta^j\beta t+o(t^2).            \end{align*}
		Thus we have
		$$\lambda^{(C_\beta^n\beta)}=-\lambda^{(s_2)},\;\varepsilon_j^{(C_\beta^n\beta)}=-\varepsilon_j^{(s_2)}\;(1\leq j\leq n-1),$$
		$$\nu_{m-1}^{(C_\beta^n\beta)}=\lambda^{(\beta\alpha)},\;\nu_i^{(C_\beta^n\beta)}=0\;(0\leq i\leq m-2).$$
		
		\item Let $v\in V_1$, $w\in V_2$, $\m(v)=m$, $\m(w)=n$, $s(\iota(h_2))=v$. If $p_{\alpha'}, p_{\beta}\in (Q_\Gamma')_1$, then let $\bar{h_1}\neq \bar{h_2}$. The $1$-ambiguity $\beta'\alpha\alpha^*=\beta' C_\alpha^m$, $s_1=\beta'\alpha$, $s_2=C_\alpha^m$. In this case, $\tphi_{s_1}$, $\tphi_{s_2}$ have forms in (7) and (1).    
		
		\begin{center}

			\tikzset{every picture/.style={line width=0.75pt}} %set default line width to 0.75pt        
			
			% [inline block 38: 1 envs, 6761 chars -> data_tex | \begin{tikzpicture}[x=0.75pt,y=0.75pt,yscale=-1,xscale=1] 				%uncomment if require: \path (0,235); %set diagram left st...]

			
		\end{center}
		Then 
		\begin{align*}(\pi(\beta')\star\pi(\alpha))\star\pi(\alpha^*)&=o(t^2);\\\pi(\beta')\star(\pi(\alpha)\star\pi(\alpha^*))&=\lambda^{(C_{\beta'}^n\beta')}\beta' t+\sum_{j=1}^{n-1}\varepsilon_j^{(C_{\beta'}^n\beta')}C_{\beta'}^j\beta' t+\sum_{i=0}^{m-1}\nu_i^{(C_{\beta'}^n\beta')}p_{\alpha'}C_\alpha^{i}t\\&\;\;+\lambda^{(s_2)}\beta' t+\sum_{j=1}^{n-1}\varepsilon_j^{(s_2)}C_{\beta'}^j\beta' t+o(t^2).
		\end{align*}
		Thus we have
		$$\lambda^{(C_{\beta'}^n\beta')}=-\lambda^{(s_2)},\;\varepsilon_j^{(C_{\beta'}^n\beta')}=-\varepsilon_j^{(s_2)}\;(1\leq j\leq n-1),\;\nu_i^{(C_{\beta'}^n\beta')}=0\;(0\leq i\leq m-1).$$
		Actually, $\beta'\alpha$ with the form in (8) (or (9)) is same as this case.
		
		\item  Let $v\in V_1$, $w\in V_2$, $\m(v)=m$, $\m(w)=n$, $\mathrm{val}(v)=\mathrm{val}(w)=1$. The $1$-ambiguity $\beta\alpha\alpha^*=\beta\alpha^m$, $s_1=\beta\alpha$, $s_2=\alpha^m$. In this case, $\tphi_{s_1}$, $\tphi_{s_2}$ have forms in (10) and (1).
		\begin{center}
			
			\tikzset{every picture/.style={line width=0.75pt}} %set default line width to 0.75pt        
			
			% [inline block 39: 1 envs, 2892 chars -> data_tex | \begin{tikzpicture}[x=0.75pt,y=0.75pt,yscale=-1,xscale=1] 				%uncomment if require: \path (0,235); %set diagram left st...]

		\end{center}
		Then 
		\begin{align*}
			(\pi(\beta)\star\pi(\alpha))\star\pi(\alpha^*)&=\lambda^{(s_1)}\alpha^{m-1} t+\xi_1^{(s_1)}\beta^n t+o(t^2);\\
			\pi(\beta)\star(\pi(\alpha)\star\pi(\alpha^*))&=\lambda^{(\beta^{n+1})}\beta t+\sum_{j=1}^{n-1}\varepsilon_j^{(\beta^{n+1})}\beta^{j+1} t+\sum_{i=0}^{m-1}\nu_i^{(\beta^{n+1})}\alpha^{i}t\\&\;\;+\lambda^{(s_2)}\beta t+\sum_{j=1}^{n-1}\varepsilon_j^{(s_2)}\beta^{j+1} t+o(t^2). 
		\end{align*}
		Thus we have
		$$\lambda^{(\beta^{n+1})}=-\lambda^{(s_2)},\;\varepsilon_j^{(\beta^{n+1})}=-\varepsilon_j^{(s_2)}\;(1\leq j\leq n-2),$$
		$$\varepsilon_{n-1}^{(\beta^{n+1})}=\xi_1^{(s_1)}-\varepsilon_{n-1}^{(s_2)},\;\nu_{m-1}^{(\beta^{n+1})}=\lambda^{(s_1)},\;\nu_i^{(\beta^{n+1})}=0\;(0\leq i\leq m-2).$$
		
		\item  Let $v\in V_1$, $w\in V_2$, $\m(v)=m$, $\m(w)=n$, $\mathrm{val}(v)\neq 1=\mathrm{val}(w)$. The $1$-ambiguity $\beta\alpha\alpha^*=\beta C_\alpha^m$, $s_1=\beta\alpha$, $s_2=C_\alpha^m$. In this case, $\tphi_{s_1}$, $\tphi_{s_2}$ have forms in (11) and (1).
		\begin{center}

			\tikzset{every picture/.style={line width=0.75pt}} %set default line width to 0.75pt        
			
			% [inline block 40: 1 envs, 3961 chars -> data_tex | \begin{tikzpicture}[x=0.75pt,y=0.75pt,yscale=-1,xscale=1] 				%uncomment if require: \path (0,235); %set diagram left st...]


		\end{center}
		Then \begin{align*}
			(\pi(\beta)\star\pi(\alpha))\star\pi(\alpha^*)&=\xi_0^{(s_1)}\beta^n t+o(t^2);\\
			\pi(\beta)\star(\pi(\alpha)\star\pi(\alpha^*))&=\lambda^{(\beta^{n+1})}\beta t+\sum_{j=1}^{n-1}\varepsilon_j^{(\beta^{n+1})}\beta^{j+1} t+\sum_{i=0}^{m-1}\nu_i^{(\beta^{n+1})}C_\alpha^{i}t\\&\;\;+\lambda^{(s_2)}\beta t+\sum_{j=1}^{n-1}\varepsilon_j^{(s_2)}\beta^{j+1} t+o(t^2).
		\end{align*}
		Thus we have
		$$\lambda^{(\beta^{n+1})}=-\lambda^{(s_2)},\;\varepsilon_j^{(\beta^{n+1})}=-\varepsilon_j^{(s_2)}\;(1\leq j\leq n-2),$$
		$$\varepsilon_{n-1}^{(\beta^{n+1})}=\xi_0^{(s_1)}-\varepsilon_{n-1}^{(s_2)},\;\nu_i^{(\beta^{n+1})}=0\;(0\leq i\leq m-1).$$
		
		\item  Let $v\in V_1$, $w\in V_2$, $\m(v)=m$, $\m(w)=n$, $\mathrm{val}(v)=1\neq \mathrm{val}(w)$. The $1$-ambiguity $\beta\alpha\alpha^*=\beta\alpha^m$, $s_1=\beta\alpha$, $s_2=\alpha^m$. In this case, $\tphi_{s_1}$, $\tphi_{s_2}$ have forms in (12) and (1).
		\begin{center}

			\tikzset{every picture/.style={line width=0.75pt}} %set default line width to 0.75pt        
			
			% [inline block 41: 1 envs, 3961 chars -> data_tex | \begin{tikzpicture}[x=0.75pt,y=0.75pt,yscale=-1,xscale=1] 				%uncomment if require: \path (0,235); %set diagram left st...]


		\end{center}
		Then \begin{align*}
			(\pi(\beta)\star\pi(\alpha))\star\pi(\alpha^*)&=o(t^2);\\\pi(\beta)\star(\pi(\alpha)\star\pi(\alpha^*))&=\lambda^{(C_\beta^n\beta)}\beta t+\sum_{j=1}^{n-1}\varepsilon_j^{(C_\beta^n\beta)}C_\beta^{j}\beta t+\lambda^{(s_2)}\beta t+\sum_{j=1}^{n-1}\varepsilon_j^{(s_2)}C_\beta^j\beta t+o(t^2).
		\end{align*}
		Thus we have
		$$\lambda^{(C_\beta^n\beta)}=-\lambda^{(s_2)},\;\varepsilon_j^{(C_\beta^n\beta)}=-\varepsilon_j^{(s_2)}\;(1\leq j\leq n-1),$$
		
	\end{enumerate}
	
	In addition to above cases, the $1$-ambiguities can not provide valuable formulas about the coefficients of cocycles.
	
	\begin{remark}\label{rmk:cocycle}
		We summarize some important cases above as follows.
		\begin{itemize}
			\item Cases 1-4 show that every $\lambda$ (every $\mu$, or every $\varepsilon$) around same vertex in the Brauer graph $\Gamma$ are actually equal to each other.
			
			\item Cases 8-13 show that every coefficients in the relations of type (b) in the reduction system $R_\Gamma$ can be represented by other coefficients in the relations in $R_\Gamma$.
			
			\item The two observations above actually tell us that every $\lambda$ induced by the relations of type (a) and type (b) in $R_\Gamma$ are equal to each other.
		\end{itemize}
	\end{remark}
	
	\section{$2$-Coboundaries} \label{appendix:coboundary}
	
	By \cite[Section 7.A]{BW}, two $2$-cocycles $$\tphi,\tphi'\in \ho_{\kk (Q_\Gamma)_0^e}(\kk S, B_\Gamma)\cong \ho_{\kk (Q_\Gamma)_0^e}(\kk S, \kk\irr_S)$$ are cohomologous, i.e. $\tphi'-\tphi=\langle\psi\rangle$ for some $\psi\in \ho_{\kk (Q_\Gamma)_0^e}(\kk (Q_\Gamma)_1, B_\Gamma)\cong \ho_{\kk (Q_\Gamma)_0^e}(\kk (Q_\Gamma)_1, \kk\irr_S)$, if and only if the isomorphism $T:\kk\irr_S[t]/(t^2)\rightarrow\kk\irr_S[t]/(t^2)$ determined by $T(x)=x+\psi(x)t$ for $x\in (Q_\Gamma)_1$, satisfies
	$$T(\varphi(s))+\tphi'(s)t=T(\alpha_m)\star\cdots\star T(\alpha_1)\;\;\text{mod $t^2$}$$
	for any $s\in S$ with $s=\alpha_m\cdots\alpha_1$ for $\alpha_i\in (Q_\Gamma)_1$.
	
	Actually, the elements $\psi\in\ho_{\kk (Q_\Gamma)_0^e}(\kk (Q_\Gamma)_1, \kk\irr_S)$ have the following forms.
	
	\begin{itemize}
		\item $v\in V(\Gamma)$, $\m(v)=m$, $h_1\neq h_2$, $s(\iota(h_1))\neq s(\iota(h_2))$.
		
		\begin{center}

			\tikzset{every picture/.style={line width=0.75pt}} %set default line width to 0.75pt        
			
			% [inline block 42: 3 envs, 12837 chars -> data_tex | \begin{tikzpicture}[x=0.75pt,y=0.75pt,yscale=-1,xscale=1] 				%uncomment if require: \path (0,235); %set diagram left st...]

		\end{center}
		Then $$\psi(\alpha)=\sum_{i=0}^{m}a_i^{(\alpha)}\alpha^i+\sum_{j=1}^{n-1}b_j^{(\alpha)}C_\beta^j.$$
	\end{itemize}
	
	Now we give some specific discussions about $2$-coboundaries. (We only show the cases where the difference is not zero)
	
	\begin{enumerate}[\textit{Case} 1.]
		\item 	Consider (1) in Section 3.1. Let $v\in V_1$, $\m(v)\mathrm{val}(v)\neq 1$, $w\in V_2$, $\m(v)=m$, $\m(w)=n$, $e$ be the idempotent corresponding to this edge. In this case, $s=C_\alpha^m$, $\varphi_s=C_\beta^n$. Let $C_\alpha=\alpha_k\cdots\alpha_1$, $C_\beta=\beta_l\cdots\beta_1$.
		\begin{center}
			
			\tikzset{every picture/.style={line width=0.75pt}} %set default line width to 0.75pt        
			
			% [inline block 43: 1 envs, 4369 chars -> data_tex | \begin{tikzpicture}[x=0.75pt,y=0.75pt,yscale=-1,xscale=1] 				%uncomment if require: \path (0,235); %set diagram left st...]

		\end{center}
		Then,  $\tphi_s=\lambda^{(s)}e+\sum_{i=1}^{m-1}\mu_i^{(s)}C_\alpha^i+\sum_{j=1}^{n-1}\varepsilon_j^{(s)}C_\beta^j+\kappa^{(s)}C_\beta^n.$ 
		\begin{itemize}
			\item If $\mathrm{val}(v)\neq 1$, $\mathrm{val}(w)\neq 1$, then 
			$$\kappa^{(s)'}-\kappa^{(s)}=m\sum_{i=1}^{k}a_0^{(\alpha_i)}-n\sum_{j=1}^{l}a_0^{(\beta_j)}.$$
			
			\item If $\mathrm{val}(v)=1$, $\mathrm{val}(w)\neq 1$, then 
			$$\mu_{m-1}^{(s)'}-\mu_{m-1}^{(s)}=ma_0^{(\alpha_1)},\;\kappa^{(s)'}-\kappa^{(s)}=ma_1^{(\alpha_1)}-n\sum_{j=1}^{l}a_0^{(\beta_j)}.$$
			
			\item If $\mathrm{val}(v)\neq 1$, $\mathrm{val}(w)= 1$, then 
			$$\varepsilon_{n-1}^{(s)'}-\varepsilon_{n-1}^{(s)}=-na_0^{(\beta_1)},\;\kappa^{(s)'}-\kappa^{(s)}=m\sum_{i=1}^{k}a_0^{(\alpha_i)}-na_1^{(\beta_1)}.$$
			
			\item If $\mathrm{val}(v)=\mathrm{val}(w)= 1$, then 
			$$\mu_{m-1}^{(s)'}-\mu_{m-1}^{(s)}=ma_0^{(\alpha_1)},\;\varepsilon_{n-1}^{(s)'}-\varepsilon_{n-1}^{(s)}=-na_0^{(\beta_1)},\;\kappa^{(s)'}-\kappa^{(s)}=ma_1^{(\alpha_1)}-na_1^{(\beta_1)}.$$
		\end{itemize}

		\item Consider (6) in Section 3.1. Let $\m(v)=m$, $\m(w)=n$, $h_1\neq h_2$, $s(\iota(h_1))=s(\iota(h_2))=v$, $e$ be the idempotent corresponding to $\bar{h_1}$. Denote the cycle starting from $\iota(h_1)$ by $C_{\sigma(\alpha)}$ and the cycle starting from $h_2$ by $C_\beta$. Actually, $C_{\sigma(\alpha)}^m=C_\beta^n$ in $B_\Gamma$. In this case, $s=\beta\alpha$, $\varphi_s=0$.
		
		\begin{center}

			\tikzset{every picture/.style={line width=0.75pt}} %set default line width to 0.75pt        
			
			% [inline block 44: 1 envs, 6356 chars -> data_tex | \begin{tikzpicture}[x=0.75pt,y=0.75pt,yscale=-1,xscale=1] 				%uncomment if require: \path (0,235); %set diagram left st...]

			
		\end{center}
		Then $\tphi_s=\lambda^{(s)}e+\sum_{i=1}^{m-1}\xi_i^{(s)}C_{\sigma(\alpha)}^i+\sum_{j=1}^{n-1}\zeta_j^{(s)}C_\beta^j+\kappa^{(s)}C_\beta^n$. In this case,
		$$\xi_i^{(s)'}-\xi_i^{(s)}=b_{i-1}^{(\beta)}\;(1\leq i\leq m-1),\;\zeta_j^{(s)'}-\zeta_j^{(s)}=b_{j-1}^{(\alpha)}\;(1\leq j\leq n-1),\;\kappa^{(s)'}-\kappa^{(s)}=b_{n-1}^{(\alpha)}+b_{m-1}^{(\beta)}.$$
		In fact, the coboundary which is given by $\kappa^{(\alpha\beta)}$ and $\kappa^{(\beta\alpha)}$ induces a deformed algebra which is isomorphic to $B_\Gamma$ (\cite[Lemma 10]{AZ1}).
		
		\item Consider (7) in Section 3.1. Let $\m(v)=m$, $\m(w)=n$. If $p_{\alpha'}, p_{\beta}\in (Q_\Gamma')_1$, then let $\bar{h_1}\neq \bar{h_2}$. Denote the cycle starting from $\iota(h_2)$ by $C_{\alpha'}$ and the cycle starting from $h_2$ by $C_{\beta'}$. In this case, $s=\beta'\alpha$, $\varphi_s=0$.
		
		\begin{center}

			\tikzset{every picture/.style={line width=0.75pt}} %set default line width to 0.75pt        
			
			% [inline block 45: 1 envs, 6895 chars -> data_tex | \begin{tikzpicture}[x=0.75pt,y=0.75pt,yscale=-1,xscale=1] 				%uncomment if require: \path (0,235); %set diagram left st...]

			
		\end{center}
		Then $\tphi_s=\sum_{i=0}^{m-1}\xi_i^{(s)}C_{\alpha'}^ip_{\alpha'}\alpha+\sum_{j=0}^{n-1}\zeta_j^{(s)}C_{\beta'}^j\beta'p_{\beta}$. In this case,
		$$\xi_i^{(s)'}-\xi_i^{(s)}=b_{i}^{(\beta)}\;(0\leq i\leq m-1),\;\zeta_j^{(s)'}-\zeta_j^{(s)}=b_{j}^{(\alpha)}\;(0\leq j\leq n-1).$$
		
		\item Consider (8) in Section 3.1. Let $\m(w)=n$, $s(\iota(h_1))=v\neq s(\iota(h_2))$. Denote the cycle starting from $h_2$ by $C_{\beta'}$. In this case, $s=\beta'\alpha$, $\varphi_s=0$.
		\begin{center}

			\tikzset{every picture/.style={line width=0.75pt}} %set default line width to 0.75pt        
			
			% [inline block 46: 3 envs, 14805 chars -> data_tex | \begin{tikzpicture}[x=0.75pt,y=0.75pt,yscale=-1,xscale=1] 				%uncomment if require: \path (0,235); %set diagram left st...]

		\end{center}
		Then $\tphi_s=\lambda^{(s)}e+\sum_{i=1}^{m-1}\xi_i^{(s)}\alpha^i+\sum_{j=1}^{n-1}\zeta_j^{(s)}\beta^{j}+\kappa^{(s)}\beta^n$. In this case,  		$$\xi_i^{(s)'}-\xi_i^{(s)}=b_{i-1}^{(\beta)}\;(2\leq i\leq m-1),\;\zeta_j^{(s)'}-\zeta_j^{(s)}=b_{j-1}^{(\alpha)}\;(2\leq j\leq n-1),$$
		$$\xi_1^{(s)'}-\xi_1^{(s)}=a_{0}^{(\beta)},\;\zeta_1^{(s)'}-\zeta_1^{(s)}=a_{0}^{(\alpha)},\;\kappa^{(s)'}-\kappa^{(s)}=b_{n-1}^{(\alpha)}+b_{m-1}^{(\beta)}.$$
		
		\item  Consider (11) in Section 3.1. Let $\m(v)=m$, $\mathrm{val}(v)\neq 1=\mathrm{val}(w)$. In this case, $s=\beta\alpha$, $\varphi_s=0$.
		\begin{center}

			\tikzset{every picture/.style={line width=0.75pt}} %set default line width to 0.75pt        
			
			% [inline block 47: 2 envs, 8391 chars -> data_tex | \begin{tikzpicture}[x=0.75pt,y=0.75pt,yscale=-1,xscale=1] 				%uncomment if require: \path (0,235); %set diagram left st...]


		\end{center}
		Then $\tphi_s=\sum_{j=0}^{n-1}\zeta_j^{(s)}C_\beta^{j}\beta$. In this case,  		$$\zeta_j^{(s)'}-\zeta_j^{(s)}=b_{j}^{(\alpha)}\;(1\leq j\leq n-1), \;\zeta_0^{(s)'}-\zeta_0^{(s)}=a_{0}^{(\alpha)}.$$
	\end{enumerate}
	
	\begin{remark}\label{rmk:coboundaries}
		\begin{itemize}
			\item By definition, each edge $\bar{h}$ corresponds to a commutative relation of the form $C_\alpha^{\m(C_\alpha)}=C_\beta^{\m(C_\beta)}$ in $B_\Gamma$. By Appendix \ref{appendix:2-cocycle}, each such commutative relation induces a $2$-cocycle sending $C_\alpha^{\m(C_\alpha)}$ to $ \kappa C_\beta^{\m(C_\beta)}$ with coefficient $\kappa \in \Bbbk$. For each cycle consisting of $n$ distinct vertices and $n$ distinct edges $\bar{h}_1,\cdots, \bar{h}_n$ in the ribbon graph $\Gamma$, denote by $\kappa_1,\cdots,\kappa_n$ the coefficients of the associated $2$-cocycles. Then Case 1 demonstrates that up to coboundary we may take $\kappa_1,\cdots,\kappa_{n-1}$ to be zero, while the term with coefficient $\kappa_n$ becomes a basis element of $\mathrm{HH}^2(B_\Gamma)$.
			
			\item Cases 2-8 show that each cocycle given by the terms with coefficients $\xi$ and $\zeta$ is a coboundary.
		\end{itemize}
	\end{remark}

    \vskip 5pt

\noindent {\bf Acknowledgements.}  \;This project is supported by National Key R\&D Program of China (No. 2024YFA1013803), the Fundamental Research
 Funds for the Central Universities (No. 020314380037), the National Natural
 Science Foundation of China (No. 13004005) and  the China Scholarship Council (No. 202506040127).

	%\newpage	


\begin{thebibliography}{1}
		
		\bibitem{AZ1} M. Antipov, A. Zvonareva. On stably biserial algebras and the Auslander–Reiten conjecture for special biserial algebras. \textit{Journal of Mathematical Sciences} \textbf{240.4} (2019) 375–394.
		
		\bibitem{AZ} M. Antipov, A. Zvonareva. Brauer graph algebras are closed under derived equivalence. \textit{Mathematische Zeitschrift} \textbf{301} (2022) 1963-1981.
		
		\bibitem{BW} S. Barmeier, Z. Wang. Deformations of path algebras of quivers with relations. arXiv: 2002.10001v5 [math.QA] (2023).
		
		\bibitem{BSW} S. Barmeier, S. Schroll, Z. Wang. Partially wrapped Fukaya categories of orbifold surfaces.  	arXiv:2407.16358 [math.SG] (2024).
		
		\bibitem{BSW1} S. Barmeier, S. Schroll, Z. Wang. Deformations of partially wrapped Fukaya categories of surfaces. arXiv:2512.16354 [math.SG] (2025).

		\bibitem{Ber} G.M. Bergman. The diamond lemma for ring theory. \textit{Advances in Mathematics} \textbf{29} (1978) 178-218.
		
		\bibitem{BH} C. Bessenrodt, T. Holm. Weighted locally gentle quiver and Cartan matrices. \textit{Journal of Pure and Applied Algebra} \textbf{212}(1) (2008) 204-221.
		
		\bibitem{BM} J.A. Bondy, U.S.R Murty. \textit{Graph Theory}. New York: Springer, 2008.

		\bibitem{CSS} C. Chaparro, S. Schroll, A. Solotar, On the Lie algebra structure of the first Hochschild cohomology of gentle algebras and Brauer graph algebras. \textit{Journal of Algebra} \textbf{558} (2020) 293-326.
		
		\bibitem{CS} S. Chouhy, A. Solotar. Projective resolutions of associative algebras and ambiguities. \textit{Journal of Algebra} \textbf{432} (2015) 22-61.
		
		\bibitem{CHQ} M. Christ, F. Haiden, Y. Qiu. Perverse schobers, stability conditions and quadratic differentials II: relative graded Brauer graph algebras.  	arXiv:2407.00154 [math.RT] (2024).

		\bibitem{Dew} I. Dewan. Graph homology and cohomology. Preprint, available at \url{https://alistairsavage.ca/pubs/Dewan-Graph_Homology.pdf} (2016).

		\bibitem{EGV} A.G. Elsener, V. Guazzelli, T. Valdivieso, Skew-Brauer graph algebras. arXiv preprint arXiv:2410.01942, 2024.
		
		\bibitem{Gab} P. Gabriel. Finite representation type is open. In: Proceedings of the International Conference Ottawa 1974. Ottawa, Carleton Mathematics Lecture Notes \textbf{9}, 132-155, Carleton University, Ottawa (1974).

		%\bibitem{Ger} M. Gerstenhaber. The cohomology structure of an associative ring. \textit{Annals of Mathematics} \textbf{78}(2) (1963) 267-288.
		
		\bibitem{Ger2} M. Gerstenhaber. On the deformation of rings and algebras. \textit{Annals of Mathematics} \textbf{79} (1964) 59-103.
		
		\bibitem{Green} E.L. Green. Noncommutative Gr{\"o}bner bases, and projective resolutions. \textit{Progress in Mathematics} \textbf{173} (1999) 29-60.

		\bibitem{GHS} E.L. Green, L. Hille, S. Schroll. Algebras and varieties. \textit{Algebras and Representation Theory}, \textbf{24}(2) (2021) 367-388.
		
		\bibitem{GS} E.L. Green, S. Schroll. Brauer configuration algebras: A generalization of Brauer graph algebras. \textit{Bulletin des Sciences Mathématiques}, \textbf{141}(6) (2017) 539-572.

         \bibitem{GSS} E.L. Green, S. Schroll, N. Snashall. Group actions and coverings of Brauer graph algebras. \textit{Glasgow Mathematical Journal} \textbf{56}(2) (2014) 439-464.
		
		\bibitem{HKK} F. Haiden, L. Katzarkov, M. Kontsevich. Flat surfaces and stability structures. \textit{Publications Math\'{e}matiques de l'IH\'{E}S} \textbf{126}(1) (2017) 247-318.
		
		\bibitem{Holm} T. Holm. Hochschild cohomology of Brauer tree algebras. \textit{Communications in Algebra} 26.11 (1998) 3625–3646.
		
		\bibitem{Hung} T.W. Hungerford. \textit{Algebra}. Springer-Verlag (GTM, vol.73), 1974.
		
		\bibitem{IQ} A. Ikeda, Y. Qiu. $q$-Stability conditions via $q$-quadratic differentials for Calabi-Yau-$\mathbb{X}$ categories.  	arXiv:1812.00010 [math.AG] (2022).
		
		\bibitem{Kel} B. Keller. Derived invariance of higher structures on the Hochschild complex. Preprint, available at \url{http://webusers.imj-prg.fr/~bernhard.keller/publ/dih.pdf} (2003)
		
		\bibitem{KS} M. Kontsevich, Y. Soibelman. Notes on $A_\infty$-algebras, $A_\infty$ categories and non-commutative geometry. In: \textit{Homological Mirror Symmetry, Lecture Notes in Phys.}, pp. 153-219, Springer, 2009.
		
		\bibitem{LP} Y. Lekili,A. Polishchuk. Derived equivalences of gentle algebras via Fukaya categories. \textit{Mathematische Annalen}, \textbf{376}(1) (2020), 187-225.
		
		\bibitem{LW} J. Liu, Z. Wang. $A_\infty$-deformations of zigzag algebras via Ginzburg dg algebras.  {\it Journal of Algebra}, \textbf{677} (2025) 360-371.
		
		\bibitem{LX} Y. Liu, B. Xing. Generalized parallel paths method for computing the first Hochschild cohomology group with applications to Brauer graph algebras. \textit{Journal of Algebra and its Applications}, advance online publication: \url{https://doi.org/10.1142/S021949882650297X} (2025).

		\bibitem{MRRS} M. M\"{u}ller, M.J. Redondo, F. Rossi Bertone, P. Suarez. Maurer-Cartan equation for gentle algebras. {\it Communications in Algebra} (2025) 1-24.
		
		\bibitem{OZ} S. Opper, A. Zvonareva. Derived equivalence classification of Brauer graph algebras. \textit{Advances in Mathematics} \textbf{402} (2022).
		
		\bibitem{Sch} S. Schroll, Trivial extensions of gentle algebras and Brauer graph algebras. \textit{Journal of Algebra} \textbf{444} (2015), 183-200.
		
		\bibitem{SS} S. Schroll. Brauer graph algebras. In: \textit{Homological Methods, Representation Theory, and Cluster Algebras}, pp. 177-223, Springer, 2018.

		\bibitem{Soto} V. Soto, Tilting mutations as generalized Kauer moves for (skew) Brauer graph algebras with multiplicity. arXiv preprint arXiv:2406.10634, 2024.
		
	\end{thebibliography}
\end{document}